\documentclass[a4paper, reqno, 11pt]{amsart}

\usepackage[english]{babel}
\usepackage{amsmath}
\usepackage{amssymb}
\usepackage{amsthm}
\usepackage{enumerate}
\usepackage{ifthen}
\usepackage{bbm}
\usepackage{color}
\provideboolean{shownotes} 
\setboolean{shownotes}{true}
\usepackage{hyperref}
\usepackage{graphicx}
\usepackage{pgfplots}
\usepackage{tikz,tikz-3dplot} 
\usepackage{mathtools}
\usepackage{comment}
\usepackage{float}
\usepackage{geometry}
\geometry{a4paper,top=2cm,bottom=3cm,left=2cm,right=2cm,
          heightrounded,bindingoffset=5mm}

\newcommand{\margnote}[1]{
\ifthenelse{\boolean{shownotes}}%
{\marginpar{\raggedright\tiny\texttt{#1}}}%
{}%
}

\newcommand{\hole}[1]{
\ifthenelse{\boolean{shownotes}}%
{\begin{center} \fbox{ \rule {.25cm}{0cm}
\rule[-.1cm]{0cm}{.4cm} \parbox{.85\textwidth}{\begin{center}
\texttt{#1}\end{center}} \rule {.25cm}{0cm}}\end{center}}
{}
}
\usepackage{mathrsfs}
\newtheorem{thm}{Theorem}[section]

\newtheorem{prop}[thm]{Proposition}
\newtheorem{lem}[thm]{Lemma}

\newtheorem{rem}[thm]{Remark}

\newtheorem{defn}[thm]{Definition}
\newtheorem*{mainthm*}{Theorem 1}
\newtheorem*{mainthmdue*}{Theorem 2}

\newcommand{\e}{\varepsilon}		       
\newcommand{\R}{\mathbb{R}}

\newcommand{\T}{\mathbb{T}}

\newcommand{\N}{\mathbb{N}}

\newcommand{\Z}{\mathbb{Z}}
\newcommand{\dive}{\mathop{\mathrm {div}}}
\newcommand{\curl}{\mathop{\mathrm {curl}}}

\newcommand{\de}{\mathrm{d}}

\numberwithin{equation}{section}

\subjclass[]{}

\keywords{Navier-Stokes equations, global smooth solutions, vortex reconnection, Beltrami fields.}

\begin{document}

\title[Global smooth solutions and vortex reconnection for N-S in $\R^3$]{Localization of Beltrami fields: global smooth solutions and vortex reconnection for the Navier-Stokes equations}

\author[G. Ciampa]{Gennaro Ciampa}
\address[G.\ Ciampa]{DISIM - Dipartimento di Ingegneria e Scienze dell'Informazione e Matematica\\ Universit\`a  degli Studi dell'Aquila \\Via Vetoio \\ 67100 L'Aquila \\ Italy}
\email[]{\href{gciampa@}{gennaro.ciampa@univaq.it}}

\author[R. Luc\`a]{Renato Luc\`a}
\address[R. Luc\`a]{BCAM - Basque Center for Applied Mathematics, Alameda de Mazarredo 14, E48009 Bilbao, Basque Country - Spain and Ikerbasque, Basque Foundation for Science, 48011 Bilbao, Basque Country - Spain.}
\email[]{\href{rluca@}{rluca@bcamath.org}}

\begin{abstract}
We introduce a class of divergence-free vector fields on $\R^3$ obtained after a suitable localization of {\em Beltrami fields}. First, we use them as initial data to construct unique global smooth solutions of the three dimensional Navier-Stokes equations. The relevant fact here is that these initial data can be chosen to be large in any critical space for the Navier--Stokes problem, however they satisfy the nonlinear smallness assumption introduced in \cite{ChG09}. As a further application of the method, we use these vector fields to provide analytical example of vortex-reconnection for the three-dimensional Navier-Stokes equations on $\R^3$. To do so, we exploit the ideas developed in \cite{ELP} but differently from this latter we cannot rely on the non-trivial homotopy of the three-dimensional torus. To overcome this obstacle we use a different topological invariant, i.e. the number of hyperbolic zeros of the vorticity field.  
\end{abstract}

\maketitle

\section{Introduction}
The Cauchy initial value problem for the three-dimensional incompressible Navier-Stokes equations is given by
\begin{equation}\label{eq:ns}\tag{NS}
\begin{cases}
\partial_t u+(u\cdot \nabla)u+\nabla p=\nu\Delta u,\\
\dive u=0,\\
u(0,\cdot)=u_0,
\end{cases}
\end{equation}
where $u:[0,T)\times \R^3 \to \R^3$ identifies the velocity of the fluid, $p:[0,T)\times\R^3 \to \R$ denotes the pressure, and $\nu> 0$ is the kinematic viscosity. The initial conditions are prescribed at time $t=0$.
The system describes the evolution of a homogeneous incompressible fluid starting from a given configuration $u_0$. It is well-known that the equations \eqref{eq:ns} can be reformulated in terms of the vorticity $\omega:=\curl u$, as follows
\begin{equation}\label{eq:ns-v}\tag{V-NS}
\begin{cases}
\partial_t \omega+(u\cdot \nabla)\omega=(\omega\cdot \nabla)u+\nu\Delta \omega,\\
u=K*\omega,\\
\omega(0,\cdot)=\curl u_0,
\end{cases}
\end{equation}
where $K$ denotes the three-dimensional Biot-Savart operator, which is a $3\times 3$ matrix operator defined as follows
\begin{equation}
K(x)h:=\frac{1}{4\pi}\frac{x\wedge h}{|x|^3},
\end{equation}
where $\wedge$ denotes the vector product in $\R^3$. Understanding the well-posedness of the Navier-Stokes equations is one of the most challenging problems in mathematics. Our aim here is twofold. First, we want to construct a class of initial data with arbitrary large scaling invariant norms, so that the equations are classically well-posed. Secondly, we want to construct analytic examples of {\em vortex reconnection} for solutions defined on the whole space $\R^3$. We now analyze these two problems in more detail.

\subsection{Global smooth solutions} In the seminal work \cite{Leray}, Leray proved the existence of weak solutions of \eqref{eq:ns} on $\R^3$: given any $T>0$ and any divergence-free vector field $u_0\in L^2(\R^3)$, there exists a divergence-free vector field $u\in L^\infty((0,T);L^2(\R^3))\cap L^2((0,T);\dot{H}^1(\R^3))$ that solves \eqref{eq:ns} in the sense of distributions and which additionally satisfies the energy inequality
\begin{equation}
\|u(t,\cdot)\|_{L^2}^2+2\nu\int_0^t\|\nabla u(s,\cdot)\|_{L^2}^2\de s\leq \|u_0\|_{L^2}^2.
\end{equation}
These solutions are commonly known as {\em Leray-Hopf} weak solutions and whether they are unique or not is still an open problem. Some recent progress toward non-uniqueness has recently been established in \cite{BV} for weak solutions and in \cite{ABC} for Leray-Hopf solutions of the forced system.

Here we are interested in global smooth solutions. The existence of local-in-time smooth solutions can be obtained via a standard fixed point argument in several functional spaces. For instance, if we are interested in the Sobolev scale, it turns out that one has local solutions in $H^s(\R^3)$ for~$s>1/2$, with a local existence time that depends on the $H^s$ norm of the initial datum. 
If we are looking for global smooth solutions, a prominent role is played by critical spaces, that is, invariants under the scaling of the Navier-Stokes equations. 
The scaling of the Navier-Stokes equations is the following: let $(u,p)$ a solution of \eqref{eq:ns} and let $\lambda>0$, then the functions
\begin{equation}\label{scaling}
u^\lambda(t,x)=\lambda u(\lambda^2 t,\lambda x),\qquad p^\lambda(t,x)=\lambda^2 p(\lambda^2 t,\lambda x)
\end{equation}
also solves \eqref{eq:ns} with initial datum
\begin{equation}\label{scalingDatum}
u^\lambda_0(x)=\lambda u_0(\lambda x),\qquad p^\lambda_0(x)=\lambda^2 p(\lambda x)
\end{equation}
We say that a  Banach  space is critical  for the initial datum if the relative norm is invariant under \eqref{scalingDatum}. Some of the relevant translation invariant critical Banach spaces for the Navier-Stokes problem are given in the following chain of embeddings 
\begin{equation}
\dot{H}^{1/2}(\R^3) \hookrightarrow L^3(\R^3) \hookrightarrow {\dot B^{-1+3/p}_{p,\infty}(\R^3)}_{(p<\infty)} \hookrightarrow BMO^{-1}(\R^3) \hookrightarrow\dot{B}^{-1}_{\infty,\infty}(\R^3),
\end{equation}
and the existence of global-in-time smooth solutions arising from {\it small initial data} in this functional spaces has been established up to $BMO^{-1}(\R^3)$ in \cite{FK, K, GM, CMP, KochTat}. All these results are obtained by looking at fixed points of the functional
\begin{equation}
u=e^{\nu t\Delta}u_0-\int_0^t e^{\nu(t-s)\Delta}\mathbb{P}\dive(u\otimes u)\de s,
\end{equation}
that is an integral reformulation of the differential problem \eqref{eq:ns}, where $e^{\nu t\Delta}$ denotes the heat kernel and $\mathbb{P}$ is the projection on the divergence-free vector fields 
subspace.
It is important to point out that the Besov space $\dot{B}^{-1}_{\infty,\infty}(\R^3)$ is actually the largest translation invariant critical Banach space for the Navier-Stokes equations. However, the Navier-Stokes equations are ill-posed in $\dot{B}^{-1}_{\infty,\infty}(\R^3)$ as shown in \cite{BP}.

The large data problem is still completely open and represent one of the greatest challenge in the field of PDEs. 
Some classes of large solutions can be constructed if one impose some additional symmetry, like for instance
being 2-dimensional  or axisymmetric without swirl (or being a small perturbation of the the former). Here, however, we 
assume a slightly different viewpoint. Looking at the second order Picard iteration (which requires a closer analysis of the nonlinearity of the equation)     
the authors of \cite{ChG06} and \cite{ChG09} provided a smallness condition (of nonlinear type) on the initial datum 
that guarantees the existence of a global-in-time smooth solution. In this way they are able to prove global well-posedness for   initial data with arbitrary large $\dot{B}^{-1}_{\infty,\infty}$ norm, even in absence of a specific symmetry.
Remarkably, other examples of global smooth solutions starting from initial data 
with arbitrary large norm were then constructed in \cite{ChGP}. We also refer to \cite{G, GIP, I, MTL, PRST} where different type of examples are provided.

In this direction, our first main result deals with the construction of further examples of initial data which satisfy the condition introduced in \cite{ChG09} for which the $\dot{B}^{-1}_{\infty,\infty}(\R^3)$ norm can be chosen arbitrary large. To simplify the presentation of our first main result we will set the viscosity $\nu =1$. It is worth mentioning that the precise value of the viscosity does not play an important role in Theorem \ref{MainThm1} (as long as $\nu>0$), while it will be more interesting to keep track of it in 
Theorem \ref{thm:main}.

The nonlinear smallness condition from \cite{ChG09} is the following 
\begin{equation}\label{NonlSmallIntro}
\| \mathbb{P} \, (e^{t\Delta} u_0 \cdot \nabla ) e^{t\Delta} u_0  \|_{E}  \leq \frac{1}{C^*} 
\exp\left(-C^*  \| u_0\|_{\dot{B}^{-1}_{\infty, 2}}^4  \right),
\end{equation}
where $C^*$ is an absolute constant and the $E$ norm is defined below in \eqref{CGSmallCond}. 
Our examples are constructed starting from the following observations. Let consider a Beltrami field $B_\lambda$ with frequency $\lambda$, i.e an eigenvector of the curl operator with eigenvalue $\lambda$
$$
\curl B_\lambda = \lambda B_\lambda.
$$
These vector fields are eigenvectors of the Laplacian $B_\lambda = - \lambda^2 B_\lambda$ and 
steady solutions of the Euler equation 
\begin{equation}
B_\lambda\cdot\nabla B_\lambda+\nabla \left(\frac{|B_\lambda|^2}{2}\right)=0.
\end{equation}
Using these two facts one can see immediately that the vector field
$e^{-t\lambda^2}B_\lambda(x)$ is a solution of the Navier-Stokes equations with 
initial datum $B_\lambda$ \big(and pressure $\frac12e^{-2t\lambda^2} |B_\lambda|^2$\big). 
Since one can rescale these solutions by any arbitrary factor (if $B_\lambda$ is a Beltrami field so it is $\rho B_\lambda$)
we can construct solutions that are large in any Banach space, simply taking the rescaling factor large.
Being eigenfunctions of the Laplacian, Beltrami fields are smooth.
However, the $L^2$ norm of Beltrami fields on $\R^3$ is infinite \cite{CC, Liouville GAFA}.  
Thus, they give rise to infinite energy smooth solutions. Moreover, these solutions are poorly localized, 
since the best decay rate at infinity for a Beltrami field is~$\sim 1/|x|$; see again \cite{CC, Liouville GAFA}. 

Our aim is to localize a Beltrami field in such a way to obtain a new vector field that is smooth and has fast decay at infinity (the localization can be even chosen in such a way that the field is compactly supported) but still retains some of the good properties of the original Beltrami field. In particular, these localized fields will be in $L^2(\R^3)$, thus our solutions will have finite energy. 
 
To do so, we would be tempted to multiply $B_{\lambda}$ against a localized function~$\phi$, however this would not preserve the divergence free nature of the datum. Thus, we consider data of the form
\begin{equation}\label{BlambdaLocIntro}
u_0 := M  \curl(\phi_{L} B_{\lambda}) , \qquad M, L > 0,
\end{equation}
that are divergence free. Here $\phi_{L}$ denotes the function
\begin{equation}\label{RescPhiL}
\phi_{L}(x) := \phi(x/L), 
\end{equation}
namely we are just considering a dilation of a suitable localizing function $\phi \in W^{s,1} \cap W^{s, \infty}$
with $s$ sufficiently large. In practice, we simply ask that $\phi$ is a sufficiently regular
function which decays sufficiently fast together with its derivatives (enough to be integrable). 
Concrete examples 
are, for instance: i) a Gaussian, ii)
a smooth function with compact support, iii) a function of the form $\phi = (1 + |x|^2)^{- \alpha}$ with 
$\alpha  \gg 1$.

We will see that choosing the parameters $M$ and $L$ sufficiently large we have that: 
\begin{itemize}
\item $\|u_0\|_{\dot{B}^{-1}_{\infty,\infty}}$ can be arbitrary large (in fact, $\|u_0\|_{\dot{B}^{-1}_{\infty,\infty}} \simeq M$ see Lemma \ref{SizeLemmaM});
\item $u_0$ satisfies the nonlinear smallness condition \eqref{NonlSmallIntro}; 
\end{itemize}

This leads to our first main result.
\begin{mainthm*}\label{MainThm1}
Let $M > 0$ and $q \in (3, \infty)$. Let $B_{\lambda} \in L^{q}\cap L^{\infty}(\R^3)$ be a Beltrami field of 
frequency~$\lambda \neq 0$ and such that $\|B_\lambda\|_{L^\infty}=1$. Let  
 $\phi \in W^{s,1} \cap W^{s, \infty}(\R^3)$ be a positive  function. 
We consider the divergence free vector field
\begin{equation}\label{DefW0Bis}
u_0 := M  \curl(\phi_{L} B_{\lambda}) , \quad \phi_{L}(\cdot) := \phi\left( \frac{\cdot}{L} \right), \qquad M, L > 0.
\end{equation}
Then 
for all sufficiently large values of the regularity $s$ and of 
the dilation parameter $L$ the following holds:
\begin{itemize}
\item every scaling invariant norm of $u_0$ is large if $M$ is large. In fact $\|u_0\|_{\dot{B}^{-1}_{\infty,\infty}} \simeq M $;
\item there exists a unique global strong solution $u$ of the Navier-Stokes equation with initial datum $u_0$.
\end{itemize} 
The solution is smooth and has finite energy, in fact satisfies the 
energy identity 
\begin{equation}\label{EnergyId}
\int_{\R^3}|u(t,x)|^2 \, \de x + \int_{0}^{t} \int_{\R^3} |\nabla u(s,x)|^2 \, \de x \de s = 
\int_{\R^3}|u_0(x)|^2 \, \de x .
\end{equation}
\end{mainthm*}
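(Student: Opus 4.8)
The plan is to deduce all three assertions from the nonlinear smallness criterion \eqref{NonlSmallIntro} of \cite{ChG09}, exploiting the order of quantifiers in the statement: the amplitude $M$ is fixed first, and only afterwards are the regularity $s$ and the dilation $L$ taken large. Writing $u_0 = M v_0$ with $v_0 := \curl(\phi_L B_\lambda)$, every norm of $u_0$ equals $M$ times the corresponding norm of the fixed field $v_0$; since $v_0 \neq 0$ this immediately yields that every scaling-invariant norm is large once $M$ is large, the precise equivalence $\|u_0\|_{\dot{B}^{-1}_{\infty,\infty}} \simeq M$ being Lemma \ref{SizeLemmaM}. The whole point is then that, with $M$ frozen, the right-hand side of \eqref{NonlSmallIntro} is a fixed positive number while its left-hand side can be driven to zero by sending $L \to \infty$. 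The global strong solution, its uniqueness and its smoothness will then follow verbatim from \cite{ChG09}, and finite energy together with the identity \eqref{EnergyId} will be obtained separately.

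The heart of the matter is the decay in $L$ of the bilinear quantity $\mathbb{P}(e^{t\Delta} u_0 \cdot \nabla) e^{t\Delta} u_0$, which is where the Beltrami structure enters. Since $\curl$ commutes with the heat semigroup, $e^{t\Delta} u_0 = M \curl\big(e^{t\Delta}(\phi_L B_\lambda)\big)$, and because $\Delta B_\lambda = -\lambda^2 B_\lambda$ one has $\Delta(\phi_L B_\lambda) = -\lambda^2 \phi_L B_\lambda + R_L$ with $R_L = (\Delta \phi_L) B_\lambda + 2\nabla \phi_L \cdot \nabla B_\lambda$ supported on derivatives of $\phi_L$, hence of size $O(1/L)$. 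A Duhamel comparison then gives $e^{t\Delta}(\phi_L B_\lambda) = e^{-\lambda^2 t} \phi_L B_\lambda + \int_0^t e^{(t-\tau)\Delta} e^{-\lambda^2 \tau} R_L \, \de\tau$, so that up to terms carrying a factor $1/L$ the flow is the pure eigenfunction decay $e^{-\lambda^2 t} \phi_L B_\lambda$. Feeding this into the bilinear term, the leading contribution is $M^2 \lambda^2 e^{-2\lambda^2 t}(\phi_L B_\lambda \cdot \nabla)(\phi_L B_\lambda)$, whose principal piece is $\phi_L^2 (B_\lambda \cdot \nabla) B_\lambda = -\phi_L^2 \nabla\big(\tfrac{1}{2}|B_\lambda|^2\big)$ by the Beltrami identity. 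This is a gradient weighted by $\phi_L^2$; writing $\phi_L^2 \nabla f = \nabla(\phi_L^2 f) - f\,\nabla(\phi_L^2)$ with $f = \tfrac12 |B_\lambda|^2$ and using $\mathbb{P}\nabla \equiv 0$, the projection leaves only $\mathbb{P}\big(f\,\nabla(\phi_L^2)\big)$, a term proportional to $\nabla\phi_L$; every surviving contribution after $\mathbb{P}$ therefore carries at least one derivative of $\phi_L$ and is $O(1/L)$.

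Collecting these estimates, I expect to bound the $E$-norm of $\mathbb{P}(e^{t\Delta} u_0 \cdot \nabla) e^{t\Delta} u_0$ by $C(M)/L$ (a positive power of $1/L$ suffices), using the integrable time weight $e^{-2\lambda^2 t}$ to absorb the time integration defining the $E$ norm. On the other side, because $v_0$ is frequency-localized in a fixed annulus $|\xi| \sim \lambda$ (the oscillation scale of $B_\lambda$), only finitely many Littlewood--Paley pieces are active, whence $\|u_0\|_{\dot{B}^{-1}_{\infty,2}} \simeq \|u_0\|_{\dot{B}^{-1}_{\infty,\infty}} \simeq M$ uniformly in $L$; the right-hand side of \eqref{NonlSmallIntro} is thus a constant $\tfrac{1}{C^*}\exp(-C^* C M^4)$ depending only on $M$. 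Choosing $L$ large enough (and $s$ large enough for the $R_L$ and projection-error estimates to close) makes $C(M)/L$ smaller than this constant, so \eqref{NonlSmallIntro} holds and \cite{ChG09} produces the unique global strong solution, which is smooth by parabolic regularization. The main obstacle is precisely this step: making the formal $1/L$ gain quantitative and uniform in time inside the $E$ norm, while simultaneously keeping the correction terms coming from $R_L$ and the norm $\|u_0\|_{\dot{B}^{-1}_{\infty,2}}$ under control, so that the exponentially small but $L$-independent right-hand side is not overwhelmed.

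Finally, finite energy is immediate: from $\phi \in W^{s,1} \cap W^{s,\infty}(\R^3)$ one gets $\phi_L, \nabla\phi_L \in L^2(\R^3)$, and since $B_\lambda \in L^\infty$ the decomposition $v_0 = \nabla\phi_L \wedge B_\lambda + \lambda \phi_L B_\lambda$ shows $u_0 = M v_0 \in L^2(\R^3)$. The global solution is then simultaneously smooth and of finite energy; multiplying \eqref{eq:ns} by $u$ and integrating, the pressure and transport terms vanish by $\dive u = 0$ and the decay of $u$, which upgrades the energy inequality to the identity \eqref{EnergyId} (equivalently, weak--strong uniqueness identifies the smooth solution with the Leray--Hopf one).
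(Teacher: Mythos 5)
Your overall strategy coincides with the paper's: fix $M$, verify the Chemin--Gallagher condition \eqref{NonlSmallIntro} by taking $L$ (and $s$) large, and exploit the Beltrami identity $(B_\lambda\cdot\nabla)B_\lambda=\nabla\big(\tfrac12|B_\lambda|^2\big)$ so that the Leray projector annihilates the leading bilinear contribution, leaving only remainders that carry a factor $\nabla\phi_L\sim L^{-1}$; your Duhamel representation of $e^{t\Delta}(\phi_L B_\lambda)-e^{-\lambda^2 t}\phi_L B_\lambda$ is just an integral form of the commutator $[e^{t\Delta},\phi_L]B_\lambda$ that the paper estimates in Lemma \ref{Lemma:dec}, and your treatment of the size of the datum and of the energy identity matches Lemma \ref{SizeLemmaM} and the paper's argument. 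However, there is a genuine gap, and it is exactly the step you flag as ``the main obstacle'': the bound $\|\mathbb{P}(e^{t\Delta}u_0\cdot\nabla)e^{t\Delta}u_0\|_{E}\lesssim C(M)L^{-\sigma}$ does not follow from the ingredients you list. The exponential weight $e^{-2\lambda^2 t}$ that you invoke to make the time integrals in \eqref{CGSmallCond} converge multiplies only the terms containing at least one copy of the pure eigenfunction part $e^{-\lambda^2 t}\phi_L B_\lambda$; the purely quadratic remainder terms (the analogues of $(R_i\cdot\nabla)R_j$ in the paper) carry no exponential decay at all. For those, the only a priori time decay is the heat bound $\|e^{t\Delta}f\|_{L^\infty}\lesssim t^{-3/2}\|f\|_{L^1}$, and the relevant $L^1$ norms \emph{grow} with $L$: $\|\nabla\phi_L\|_{L^1}\sim L^{2}\|\nabla\phi\|_{L^1}$. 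Per remainder factor one thus only has $\min\{L^{-1},\,t^{-3/2}L^{2}\}$, so the quadratic terms contribute to $\int_0^\infty\|\cdot\|_{\dot B^{-1}_{\infty,1}}\,\de t$ an amount of order $\int_0^\infty \min\{L^{-2},t^{-3}L^{4}\}\,\de t\simeq 1$ (the crossover is at $t\sim L^2$), which is $O(1)$ \emph{independently of} $L$, not small. So the scheme, as stated, does not close: the estimate you ``expect'' is not merely unproven but false without further input.

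The missing input is precisely where the hypotheses ``$s$ large'' is consumed, and it constitutes the bulk of the paper's Section 3: (i) a dichotomy between small times $t\leq L^\varepsilon$ and large times $t>L^\varepsilon$; (ii) a frequency splitting of $\phi_L$ at the \emph{fixed} scale $\lambda/2$, so that the low-frequency part times $B_\lambda$ has Fourier support in the annulus $\{\lambda/2\leq|\xi|\leq\tfrac32\lambda\}$ and is killed exponentially, $e^{-ct\lambda^2}$, by Lemma \ref{lemma chemin}, while the high-frequency part is small of order $\lambda^{-k}L^{-k+3}$ by Bernstein with $k\sim s$ derivatives --- this is how regularity of $\phi$ is converted into the decay $L^{-\theta s}$ for $t>L^\varepsilon$ (Lemmas \ref{LemmaT1} and \ref{Lemma:dec}), which is what makes the large-time part of the $E$-norm both convergent and small; and (iii) in the Littlewood--Paley summation defining the $E$-norm, two different Lebesgue exponents, since $\sum_j 2^{-j+3j/r}$ converges only for $r>3$ over $j\geq0$ and only for $r<3$ over $j<0$; this forces the $L^r$ estimates of Lemma \ref{ForcingTermLemma} to be applied with $r>3$ on high frequencies and with $r=3-\delta$ (estimate \eqref{CommBound3}) on low frequencies. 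None of these mechanisms appears in your sketch. (A minor point: your sign in $\phi_L^2(B_\lambda\cdot\nabla)B_\lambda=-\phi_L^2\nabla\big(\tfrac12|B_\lambda|^2\big)$ is wrong, but harmless, since the term is a gradient either way.) Until the uniform-in-time $E$-norm bound is established along these lines, the theorem is not proved.
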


Our assumptions $B_{\lambda} \in  L^{q}\cap L^{\infty}$, for some $q \in  (3, \infty)$ cover a large class of Beltrami fields. For instance, in \cite{Acta} the authors constructed Beltrami fields realizing (with their integral lines) arbitrarily complicated topological structures and with optimal decay at infinity $\sim 1/|x|$. These vector fields belong to $L^{3+\varepsilon}\cap L^{\infty}$, for all $\varepsilon >0$. However, the fact that we cannot take $q = \infty$ leaves out other important classes of Beltrami fields, like for instance the ABC flows. Understanding whether a similar result can be proved for ABC flows is, in our opinion, an interesting open problem. Lastly, we emphasize that the hypothesis $q>3$ is consistent with the Liouville Theorem for Beltrami fields, see \cite{CC, Liouville GAFA}. In particular, if a Beltrami field $B\in L^q(\R^3)$ with $q\in[2,3]$ or $B(x)=o(1/|x|)$ for $|x|\to \infty$, then $B$ must be identically $0$.

\begin{rem}
It is worth remarking that in order to verify that our initial data satisfies the condition \eqref{NonlSmallIntro}, the presence of the Leray projector $\mathbb{P}$ will be crucial.  This is related to the algebraic structure of Beltrami fields and, in particular, to the fact that they are stationary solutions of the Euler equations: $\mathbb{P} \left( (B \cdot \nabla) B \right) =0$.
\end{rem}

\subsection{Vortex reconnection}
Our attention now shifts to the phenomenon of {\em vortex reconnection}: it refers to a change in the topological structure of the vorticity field lines of a solution of \eqref{eq:ns}. We refer to \cite{KT, OC} and reference therein for an overview on the problem from a physical and numerical point of view. We point out that in the first reference the term ``vorticity reconnection" is used instead. From a mathematical point of view, we will say that a solution $u$ of \eqref{eq:ns} shows vortex reconnection if there exist $t_1, t_2$ such that there is no homeomorphism of $\R^3$ mapping the set of the vortex lines of $u(t_1,\cdot)$ into vortex lines of $u(t_2,\cdot)$. A {\em vortex line} of $u$ is an integral line of its vorticity, i.e. a curve $\gamma$ on $\R^3$ which solves the autonomous ODE
\begin{equation}
\dot{\gamma}(s)=\omega(t,\gamma(s)),
\end{equation}
where $t$ is fixed and $s$ represent the parameter that defines the curve. In the case of the Euler equations, i.e. $\nu=0$, a smooth solution $\omega$ of \eqref{eq:ns-v} satisfies the formula
\begin{equation}\label{eq:euler}
\omega(t,\Phi_t(x))=\nabla \Phi_t(x)\omega_0(x),
\end{equation}
where $\Phi_t$ is the fluid flow, i.e. the solution of
$$
\begin{cases}
\frac{\de}{\de t} \Phi_t (x) = u(t,\Phi_t (x)), \\
\Phi_0 (x) = x.
\end{cases}
$$
The formula \eqref{eq:euler} defines the push-forward of $\omega_0$ by $\Phi_t$: if the velocity field $u$ is smooth, $\Phi_t:\R^3\to\R^3$ is a diffeomorphism and the identity \eqref{eq:euler} implies that, at every time $t>0$, the integral lines of $\omega_0$ and $\omega(t,\cdot)$ are diffeomorphic. Thus, as long as the solution remains smooth, there is no reconnection.

On the contrary, in the case of the Navier-Stokes equations, i.e. $\nu>0$, the topology of the vortex lines is expected to change under the fluid evolution, even for regular solutions. The heuristic underlying the phenomenon is that the diffusion allows breaking the topological rigidity. Although experimental and numerical evidences were known, the first analytical examples of this phenomenon were provided by the second author, Alberto Enciso and Daniel Peralta-Salas in \cite{ELP} in the case of periodic smooth solutions, i.e. solutions defined on the three-dimensional torus $\T^3$. We now give a summary of the main ideas of the result in \cite{ELP}: 
the strategy of the construction is to consider data for which the velocity at time $t=0$ has the form: 
$$
B_{N_0} + \delta B_{N_1}, \quad 0<  \delta \ll 1, \quad N_0 \gg N_1,
$$ 
where $B_{N_j}$ are high frequency Beltrami fields, which are eigenvectors of the curl operator with eigenvalues (frequency) $N_j$. 
Moreover the fields $B_{N_j}$ satisfy the following properties:
\begin{enumerate}
[$i)$]
\item all the vortex lines of $B_{N_0}$ wind around a certain direction of the torus, in particular all of them are non contractible. This is a robust topological property, in the sense that it is still valid for all sufficiently small regular perturbations of $B_{N_0}$.
\item The field $B_{N_1}$ has some contractible vortex line. This is again topologically robust.
\end{enumerate}
Let us stress the fact that while the Beltrami field $B_{N_0}$ can be constructed explicitly, and relies on the topological properties of the torus $\T^3$, the existence of the field $B_{N_1}$ follows from deep topological results proved in \cite{Annals, Acta, EPT}. In a nutshell, in the results just mentioned, it is shown that given any finite collection of closed curves $\mathcal{S}$, knotted and linked in arbitrary ways, there exists a Beltrami field $B$ with sufficiently high frequency $N$ such that $B$ has a collection of integral lines (contained in a ball of radius $1/N$) which are diffeomorphic to $\mathcal{S}$. The proof of the reconnection is then obtained by choosing the relevant parameters of the construction $\delta, N_0, N_1$ in such a way that the solution will be sufficiently close (in the $C^1$-norm) to $B_{N_0}$ at time $t=0$ and to a suitable rescaled version of the field $B_{N_1}$, at time $t= T >0$. 
The robustness of the topological constraint $i)$, $ii)$ implies that the vortex lines of the solution at time $t=0$ and $t=T$ are not homeomorphic. Thus we must have had vortex reconnections in the intermediate times. \\
\\
Our goal is to provide examples of reconnection in the same spirit of \cite{ELP} but for solutions defined on the full space $\R^3$. In doing so, two main difficulties are encountered. First of all, the homotopy of $\R^3$ is trivial, indeed any curve embedded in $\R^3$ is contractible. Thus we have to identify a different topological constraint, which must be also topologically robust. Secondly, the proof of \cite{ELP} is given via a perturbative argument and it is crucial that the Beltrami fields provide explicit global smooth solutions of the Navier-Stokes equations. However, Beltrami fields have infinite energy on $\R^3$ and we wish to 
avoid infinite energy solutions. Any attempt to localize them will force us to work with reference solutions that are 
not explicit anymore. To overcome these problems our strategy is as follows:
\begin{enumerate}
[$i)$]
\item To state that two vector fields are not topologically equivalent we will count the number of hyperbolic zeros. This property is topologically robust.
\item Given a Beltrami field $B$ of frequency $N$ (namely $\curl B = N B$) one can consider a bump function $\psi$ such that $\|\psi B\|_{H^r}<\infty$. Then, to ensure the divergence-free condition we can consider the vector field $\curl(\psi B)$, which will be a perturbation of $N \psi B$ for an appropriate choice of the function $\psi $.
\end{enumerate}

We recall that ``counting the zeros of a vector field" has been used as a topological constraint by the authors and Pedro Caro in \cite{CCL} (see also \cite{Hyp22}) to construct examples of smooth periodic solutions of the two-dimensional MHD system showing {\em magnetic reconnection}. The latter represents, at least from a purely analytical point of view, the analogue of the vortex reconnection in the contest of Magnetohydrodynamics. Concerning topological results for MHD equations, we also mention the recent work \cite{EP-top} where a sophisticated counting argument has been used to prove obstructions to topological relaxation.
Our second main result is the following.

\begin{mainthmdue*}\label{thm:main}
Given any constants $\nu$, $T>0$, there exists a (small) smooth divergence-free vector field $u_0:\R^3\to\R^3$ such that \eqref{eq:ns} admits a unique global smooth solution $u$ with initial datum $u_0$, such that the vortex lines at time $t=0$ and $t=T$ are not topologically equivalent, meaning that there is no homeomorphism of $\R^3$ into itself mapping the vortex lines of $u(0,\cdot)$ into that of $u(T,\cdot)$.
\end{mainthmdue*}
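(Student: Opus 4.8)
\emph{Strategy and the invariant.} The plan is to run the perturbative scheme of \cite{ELP}, replacing the homotopy-type invariant (which is useless on the simply connected $\R^3$) by the number of isolated \emph{hyperbolic zeros} of the vorticity. First I would record the two facts that make this number usable. (a) \emph{Topological invariance}: the isolated zeros of a vector field $\omega$ are exactly the points where its integral-line structure degenerates to a point, so a homeomorphism of $\R^3$ carrying the vortex lines of one field onto those of another must carry isolated zeros bijectively onto isolated zeros; hence the cardinality of the zero set is a homeomorphism invariant of the foliation by vortex lines. (b) \emph{Robustness}: a nondegenerate (hyperbolic) zero persists, stays isolated, and generates no new zeros nearby under $C^1$-small perturbations, by the inverse function theorem. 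Thus if $\omega$ has exactly $k$ hyperbolic zeros and is bounded away from $0$ elsewhere, every $C^1$-close field has exactly $k$ zeros as well.

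\emph{The data.} I take the initial velocity
$u_0 := \varepsilon\,\curl\!\big(\phi_L\,(B_{N_0}+\delta B_{N_1})\big)$, with $0<\varepsilon,\delta\ll 1$ and $N_0\gg N_1$, where $B_{N_j}\in L^q\cap L^\infty(\R^3)$ are Beltrami fields of frequency $N_j$ and $\phi_L$ is a slowly varying localizer as in \eqref{RescPhiL}. Since $\curl B_{N_j}=N_jB_{N_j}$, the vorticity of a pure $B_{N_j}$ is $N_jB_{N_j}$ and its zeros coincide with those of $B_{N_j}$; using the realization theorems \cite{Annals,Acta,EPT} I would choose $B_{N_0}$ and $B_{N_1}$ whose zero sets, realized inside a fixed ball where $\phi_L\equiv 1$, are finite, hyperbolic, and of \emph{different} cardinalities $k_0\neq k_1$. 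The prefactor $\varepsilon$ renders $u_0$ arbitrarily small in any critical space without affecting the topology of the vortex lines (which is scale invariant), so global existence, uniqueness, smoothness, and the energy identity follow from small-data theory in the spirit of Theorem 1. Smallness also makes the Duhamel nonlinearity quadratically small, so $u(t)$ remains $C^1$-close to the linear heat evolution $e^{\nu t\Delta}u_0$ uniformly on $[0,T]$.

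\emph{The mechanism: differential dissipation.} Because $\Delta B_{N_j}=-N_j^2B_{N_j}$, the localized field $\phi_L B_{N_j}$ is an approximate eigenfunction of the Laplacian with error $O(1/L)$, whence $e^{\nu t\Delta}(\phi_L B_{N_j})$ is close to $e^{-\nu N_j^2 t}\phi_L B_{N_j}$; likewise $\curl u_0\approx \varepsilon\big(N_0^2\phi_L B_{N_0}+\delta N_1^2\phi_L B_{N_1}\big)$ up to $O(1/L)$. At $t=0$ the $B_{N_0}$ term dominates in $C^1$ (as $N_0\gg N_1$ and $\delta\ll 1$), so by robustness the vorticity has exactly $k_0$ zeros. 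At $t=T$ the high frequency is damped by $e^{-\nu N_0^2 T}$ while the low one only by $e^{-\nu N_1^2 T}$; choosing $\delta$ in the window $(N_0/N_1)^2 e^{-\nu(N_0^2-N_1^2)T}\ll\delta\ll 1$, which is nonempty once $N_0\gg N_1$, makes the $B_{N_1}$ term dominant at time $T$, so the vorticity is $C^1$-close to a multiple of $\phi_L B_{N_1}$ and has exactly $k_1$ zeros. Since $k_0\neq k_1$, fact (a) forbids any homeomorphism matching the two families of vortex lines, which is precisely vortex reconnection.

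\emph{Main obstacle.} The delicate point is to make the count \emph{global and exact}, not merely ``in the bulk.'' Writing $u_0=\varepsilon(\nabla\phi_L\wedge B+N\phi_L B)$, one must control the zeros created in the transition region where $\nabla\phi_L\neq0$ and in the decaying tail, where the solution no longer resembles a pure Beltrami field; this forces a careful choice of $\phi_L$ together with the decay class $L^q\cap L^\infty$ so that outside the prescribed ball the relevant field is provably nonvanishing. One must also quantify the approximate-eigenfunction error $O(1/L)$ and the nonlinear Duhamel error precisely enough that the resulting $C^1$-smallness beats the distance from the vorticity to $0$ on the boundary sphere, so that robustness applies at both $t=0$ and $t=T$. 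Once these quantitative estimates are in place, the global well-posedness, the heat-evolution comparison, and the persistence of the hyperbolic zeros are routine.
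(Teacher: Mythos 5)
Your overall scheme --- two--frequency differential dissipation in the spirit of \cite{ELP}, with the homotopy invariant replaced by a count of hyperbolic zeros --- is the right kind of idea, and your facts (a)--(b) and the window for $\delta$ are sound as far as they go. But there is a genuine gap at the heart of the argument, and it sits exactly where you park it as the ``main obstacle'': your invariant is the \emph{exact global} number of zeros, and nothing in your construction controls zero sets. The realization theorems \cite{Annals, Acta, EPT} produce Beltrami fields possessing a prescribed collection of closed \emph{integral curves} (knots, links, tubes) inside a small ball; they give no information whatsoever about the zero set of those fields, inside or outside that ball. So the step ``choose $B_{N_0},B_{N_1}$ whose zero sets are finite, hyperbolic, and of different cardinalities $k_0\neq k_1$'' is unsupported: a Beltrami field on $\R^3$ in $L^q\cap L^\infty$ is an oscillating eigenfunction of the Laplacian decaying at best like $1/|x|$, and its global zero set is in general complicated, possibly infinite. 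Moreover, even granting such fields, the exact count cannot be propagated by perturbation on all of $\R^3$: since every admissible field decays at infinity, ``the $B_{N_0}$ term dominates in $C^1$'' can only hold in absolute terms, and in the far field --- where the dominant term is itself arbitrarily small and both fields decay at the same $1/|x|$ rate, so the relative error does not tend to zero --- a $C^1$-small perturbation can create additional zeros. Robustness of hyperbolic zeros protects the $k_j$ zeros you want, but nothing excludes extra zeros in the transition region and the tail, at either time.

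This is precisely why the paper's proof is built asymmetrically, and with different reference fields. At $t=0$ it needs \emph{no zeros at all}, which it obtains from the non-decaying shear Beltrami field $B_N=(\sin Nx_3,\cos Nx_3,0)$ of \eqref{def:BN} (note $|B_N|\equiv 1$; it lies only in $L^\infty$, outside your $L^q\cap L^\infty$ class) multiplied by the positive weight $\phi$ of \eqref{Rec1Size}: this gives the global \emph{weighted} lower bound \eqref{est:lb-omega1}, $|\omega_0^1|\gtrsim N^2(1+|x|^2)^{-\alpha}$, and the second piece of the datum is shown to decay strictly faster than $(1+|x|^2)^{-\alpha}$, so no zeros occur anywhere. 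At $t=T$ it needs only \emph{at least one} zero; this is manufactured by placing in the datum the backward-heat-prepared field $u_0^2=e^{-\nu T\Delta}\curl(\psi W)$, with $W=(\sin x_2,\sin x_3,\sin x_1)$ an eigenvector of $\curl\curl$, so that the linear evolution at time exactly $T$ is the \emph{explicit} field $\curl\curl(\psi W)$, whose hyperbolic zero at the origin is verified by direct computation (Proposition \ref{prop:punto critico}); $C^1$-closeness then yields a zero of $\omega(T,\cdot)$ near the origin. The dichotomy ``no zeros vs.\ at least one zero'' is a homeomorphism invariant of the vortex-line structure just as yours is, but it avoids global exact counting entirely, which is what your version cannot supply. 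To repair your proof you would have to replace the counts $k_0\neq k_1$ by this asymmetric invariant --- take for the $t=0$ field something nonvanishing with a global weighted lower bound, and demand only one robust zero at time $T$ --- at which point you essentially reproduce the paper's argument.
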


More visually, what will happen is that the vorticity will have no zeros at the initial time $t=0$ while will have at least a (hyperbolic) zero at a later time $t=T$. This shows the change of the topology of the vortex lines. The scenario of vortex reconnection is structurally stable, as explained in Remark \eqref{RemReconnecitonTime} below. We also refer to this remark, and thus to Theorem \ref{thm:tempo}, for a quantification of the reconnection time.

We give some comments on the proof of the {\bf Theorem 2}. We will construct {\em small} global smooth solutions of the Navier-Stokes equations. The initial datum will be of the form
$$
u_0:= 
\rho \left( u_0^1 +  u_0^2 \right),
$$
where
$$
u_0^1:= \curl(\phi B_{N}), \qquad 
u_0^2 :=  e^{-\nu T\Delta} \curl(\psi W),
$$
and
\begin{enumerate}
[$i)$]
\item $B_{N}$ is a Beltrami field (in fact an ABC flow) which has no zeros, while $W$ is an eigenvector of the operator $\curl\curl$ which has some hyperbolic zeros (in fact infinitely many);
\item $\phi$ and $\psi$ are two positive fast-decaying functions carefully chosen;
\item $N$ will be chosen very large: this will guarantee that at time 
$t=0$ the vector field~$ \omega_0 := \curl u_0$ can be considered as a perturbation of $\omega_0^1 := \curl u_0^1$, while at time $t=T$ the solution $\omega(T,\cdot)$ is arbitrarily close to a suitable rescaled version of $\curl\curl\psi W$;
\item $\rho$ will be chosen small enough, depending only on the frequency $N$ of $B_{N}$, to guarantee the global-in-time existence of our solutions.
\end{enumerate}

\begin{rem}\label{RemReconnecitonTime}
The reconnection scenario we present is not instantaneous, as we will show in Theorem \ref{thm:tempo}: this means that our solution do not have zeros not only for $t=0$, but also for any $0<t\ll 1$ small enough. This rules out the occurrence of zeros ``emerging" instantaneously from infinity. Furthermore, it will be clear by the proof that our construction is stable with respect to perturbations of the initial datum and/or the target time. We will also show that the reconnection happens on a viscous time scale. The latter means that the time of the reconnection is of order $\mathcal{O}(\nu^{-1})$: more precisely, we will show in Theorem \ref{thm:tempo} that the reconnection must happens at a time of order $\mathcal{O}(\nu^{-1}N^{-2})$ where $N$ is the frequency of the Beltrami field $B_N$ in the definition of $u_0$.    
\end{rem}

Finally, we point out that {\bf Theorem 2} provides also a bifurcation result for the Navier-Stokes equations on $\R^3$. In the two-dimensional {\it periodic} case, a similar scenario has been proposed in \cite{LS1, LS2}. In those papers, the authors study topology change of streamlines under short time evolution. The method of proof is completely different from ours and relies on a Taylor expansion in time, which allows them to compute the $\mathcal{O}(t)$ correction on the chosen data to show that the stream function of the solution has a number of critical points that changes in time. Exhibiting bifurcations in the periodic setting is easier, besides the aforementioned papers we also refers the reader to \cite{CCL, Hyp22, ELP}.

\subsection*{Organization of the paper} The paper is divided as follows. In Section 2 we set the notations and we recall some harmonic analysis tools together with some results on the heat kernel and on the Navier-Stokes equations. In Section 3 we provide the construction of the large initial data and then we prove the relative well-posedness result, this leads us to {\bf Theorem 1}. Then, in Section 4 we build the initial data for the reconnection result and we provide the proof of {\bf Theorem 2}. Lastly, in Section 5 we discuss the stability with respect to time of the reconnection scenario of {\bf Theorem 2}.

\section{Notations and preliminaries}\label{Sec:Notations} 
Throughout the paper, $C$ will denote a positive constant whose value can change line by line. We will use the symbol $a \lesssim b$ whenever there exists an absolute constant $C>0$ such that $a\leq Cb$.
\subsection{Functional spaces}
For any given positive integer $m$ and any given $d$-dimensional vector field $w:\R^d\to\R^d$, we define the quantity
$$
|\nabla^m w|^2:= \sum_{|\alpha|=m} |\partial^\alpha w|^2,
$$
where $\alpha \in \N^d$ is a multi-index, and then the Sobolev norm of $w$ can be defined as
$$
\|w\|^2_{W^{r, p}}:=\sum_{m=0}^r\int_{\R^d}|\nabla^m w(x)|^p \de x,
$$
where $r$ is a given positive integer. We denote as usual $W^{r, 2} = H^{r}$. Moreover, we shall denote with $\dot{H}^r$ the classical homogeneous Sobolev spaces, i.e. the space
$$
\dot{H}^r(\R^d):=\left\{ f\in \mathcal{S}': \hat{f}\in L^1_\mathrm{loc}(\R^d),\quad\mbox{and }\sum_{|\alpha|=m} |\partial^\alpha f|^2<\infty\right\},
$$
where with $\mathcal{S}'$ we denote the space of tempered distributions, and $\hat{f}$ denotes the Fourier transform of $f$.\\

We now recall the classical Littlewood-Paley decomposition, we use the notations of \cite{ChG09}.
Let $\varphi\in \mathcal{S}(\R^3)$ be a Schwartz function that the Fourier transform $\widehat{\varphi}$ satisfies 
\begin{equation}
\widehat{\varphi}(\xi)=\begin{cases}
1\qquad \mbox{for }|\xi|\leq 1,\\
0 \qquad \mbox{for }|\xi|>2.
\end{cases}
\end{equation}
For $j\in\Z$ we define the function $\varphi_j(x):=2^{3j}\varphi(2^j x)$, and the Littlewood-Paley operators 
\begin{equation}
S_j:=\varphi_j*\cdot,\qquad \Delta_j:=S_{j+1}-S_j.
\end{equation}
Note that the Fourier transform of $\Delta_j$ is supported on the dyadic annulus $\{\xi\in\R^3:2^{j-2}<|\xi|<2^{j}\}$.\\
\begin{defn}
Let $f\in\mathcal{S}'(\R^3)$. Then $f$ belongs to the homogeneous Besov space $\dot{B}^s_{p,q}(\R^3)$ if and only if
\begin{itemize}
\item the partial $\sum_{j=-m}^{m} \Delta_j f$ converges towards $f$ as a tempered distribution;
\item the sequence $\e_j=2^{js}\|\Delta_j f\|_{L^p}$ belongs to $\ell^q(\Z)$.
\end{itemize}
\end{defn}
With this definition, the Besov norm $\|\cdot\|_{\dot{B}^{s}_{p, q}}$ is defined  for all $1\leq p,q\leq \infty$ and $s\in\R$ as
\begin{equation}\label{Def:BesovNorm}
\| f \|_{\dot{B}^{s}_{p, q}} := \left( \sum_{j \in \mathbb{Z}} 2^{j q s} \| \Delta_j f\|_{L^p}^q \right)^{1/q}.
\end{equation}
Moreover, if $s<0$, one also have the equivalent norm
\begin{equation}
\| f \|_{\dot{B}^{s}_{p, q}} \sim \|t^{-s/2}\|e^{t\Delta}f\|_{L^p}\|_{L^q\left(\R^+;\frac{\de t}{t}\right)},
\end{equation}
where $e^{t\Delta}$ denotes the heat kernel.

\subsection{Harmonic analysis tools}\label{Sec:Decoupling}
Let $|D|^k$ be the Fourier multiplier with symbol $|\xi|^k$. Define the operator $P_{\leq \rho}$ as a smooth Fourier projection on the ball $B_{\rho}(0)$ of radius $\rho$ and center zero, namely 
\begin{equation}
\widehat{P_{ \leq \rho} f} (\xi):= \chi\left( \frac{\xi}{\rho} \right) \widehat{f}(\xi),
\end{equation}
with $\chi \geq 0$ being a smooth cut-off of the ball $B_{1/2}(0)$. More precisely 
$$
\chi(\xi) = \begin{cases}
1 \qquad &\mbox{for } \xi\in B_{1/2}(0),\\
0 &\mbox{for } \xi\in\R^3\setminus\bar{B}_{1}(0).
\end{cases}
$$
The complementary projection is defined as  $P_{> \rho} := 1 - P_{\leq \rho}$.
We now recall some harmonic bound as the Bernstein inequalities, that will be used in the 
following forms (see \cite{BCD, Lema-Rieusset}).
\begin{prop} Let $\rho>0$ and $1 \leq p \leq q \leq  \infty$. Then, for any $f\in L^p(\R^3)$ and $\rho >0$ 
we have that
\begin{align}\label{BernsteinIneq1}
\| P_{\leq \rho} f \|_{L^q} &\lesssim 
\rho^{3\left( \frac1p - \frac1q \right)} \| P_{\leq \rho}  f \|_{L^p},\\
\label{BernsteinIneq3}
\| \nabla P_{\leq \rho} f \|_{L^p} &\lesssim 
\rho \| P_{\leq \rho}  f \|_{L^p},\\
\label{BernsteinIneq2}
\| P_{> \rho} f \|_{L^p} &\lesssim 
\rho^{-k} \| P_{> \rho} |D|^k f \|_{L^p}.
\end{align}
\end{prop}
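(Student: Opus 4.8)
The plan is to derive all three bounds from a single mechanism: each operator is a Fourier multiplier whose symbol is a dilation of one fixed well-behaved function, so the operator is convolution against a rescaled kernel, and the inequalities then follow from Young's convolution inequality together with the $L^r$-scaling of the rescaled kernel. I will fix once and for all a fattened cut-off $\tilde\chi\in C_c^\infty(\R^3)$ with $\tilde\chi\equiv 1$ on $B_1(0)$ and $\mathrm{supp}\,\tilde\chi\subset B_2(0)$, and write $\tilde K:=\mathcal{F}^{-1}\tilde\chi\in\mathcal{S}(\R^3)$, $\tilde K_\rho(x):=\rho^3\tilde K(\rho x)$.

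For \eqref{BernsteinIneq1}, the point is that $\widehat{P_{\leq\rho}f}$ is supported in $\{|\xi|\leq\rho\}$, on which $\tilde\chi(\xi/\rho)\equiv 1$; hence $P_{\leq\rho}f=\tilde K_\rho*P_{\leq\rho}f$. Young's inequality with exponents satisfying $1+\tfrac1q=\tfrac1r+\tfrac1p$ gives $\|P_{\leq\rho}f\|_{L^q}\le\|\tilde K_\rho\|_{L^r}\|P_{\leq\rho}f\|_{L^p}$, and since the admissibility $1\le p\le q\le\infty$ forces $r\ge 1$, this is legitimate. The scaling $\|\tilde K_\rho\|_{L^r}=\rho^{3(1-1/r)}\|\tilde K\|_{L^r}$ together with the identity $1-\tfrac1r=\tfrac1p-\tfrac1q$ produces exactly the factor $\rho^{3(1/p-1/q)}$, with $\|\tilde K\|_{L^r}<\infty$ because $\tilde K$ is Schwartz. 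The bound \eqref{BernsteinIneq3} is the same computation with the derivative landing on the kernel: $\nabla P_{\leq\rho}f=(\nabla\tilde K_\rho)*P_{\leq\rho}f$, and since $\|\nabla\tilde K_\rho\|_{L^1}=\rho\,\|\nabla\tilde K\|_{L^1}$, Young with $r=1$ (so that $p=q$ is allowed) yields the factor $\rho$.

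For \eqref{BernsteinIneq2} I will instead represent $P_{>\rho}f$ as a multiplier applied to $P_{>\rho}|D|^k f$. The symbol $1-\chi(\xi/\rho)$ is supported in $\{|\xi|\ge\rho/2\}$, where $|\xi|\neq 0$, so I pick $\psi\in C^\infty(\R^3)$ vanishing near the origin with $\psi\equiv 1$ on $\{|\xi|\ge 1/2\}$ and set $m(\xi):=\psi(\xi/\rho)\,|\xi|^{-k}=\rho^{-k}\,n(\xi/\rho)$, where $n(\eta):=\psi(\eta)\,|\eta|^{-k}$. Since $\psi(\xi/\rho)\equiv 1$ on the support of $1-\chi(\xi/\rho)$, one checks $m(\xi)\big(1-\chi(\xi/\rho)\big)|\xi|^k=1-\chi(\xi/\rho)$, i.e. $P_{>\rho}f=m(D)\,P_{>\rho}|D|^k f$. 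The associated kernel is $\rho^{-k}$ times a dilation of $\mathcal{F}^{-1}n$, whose $L^1$ norm is scale invariant, so Young with $r=1$ gives $\|P_{>\rho}f\|_{L^p}\lesssim\rho^{-k}\|\mathcal{F}^{-1}n\|_{L^1}\|P_{>\rho}|D|^k f\|_{L^p}$.

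The only step that is not pure bookkeeping, and the one I expect to be the main obstacle, is verifying $\mathcal{F}^{-1}n\in L^1(\R^3)$ uniformly in $\rho$ for every admissible $k$. Because $n$ is smooth, vanishes near the origin, and satisfies symbol bounds $|\partial^\alpha n(\eta)|\lesssim|\eta|^{-k-|\alpha|}$ at infinity, repeated integration by parts shows $\mathcal{F}^{-1}n$ decays faster than any polynomial at spatial infinity, while the frequency decay $n(\eta)\sim|\eta|^{-k}$ forces at worst an $|x|^{k-3}$ singularity at the origin, which is locally integrable precisely because $k>0$; the two facts together give $\mathcal{F}^{-1}n\in L^1$. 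An alternative, which sidesteps the singularity analysis entirely, is to split $\{|\xi|\gtrsim\rho\}$ into dyadic shells $\{|\xi|\sim 2^j\rho\}_{j\ge 0}$, estimate each compactly frequency-supported piece by a Schwartz kernel of $L^1$-norm $\lesssim(2^j\rho)^{-k}$, and sum the resulting geometric series in $j$, which converges exactly because $k>0$. Either route closes the argument, and both are standard (cf. \cite{BCD, Lema-Rieusset}).
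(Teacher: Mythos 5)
Your proof is correct: the reproducing-kernel identity $P_{\leq\rho}f=\tilde K_\rho * P_{\leq\rho}f$ combined with Young's inequality and the $L^r$-scaling of $\tilde K_\rho$ for \eqref{BernsteinIneq1} and \eqref{BernsteinIneq3}, and the dilation-invariant multiplier $n(\eta)=\psi(\eta)|\eta|^{-k}$ for \eqref{BernsteinIneq2} — with the dyadic-shell summation (convergent since $k>0$) rigorously supplying $\mathcal{F}^{-1}n\in L^1$ uniformly in $\rho$ — is exactly the standard argument. The paper gives no proof of its own, recalling the proposition from \cite{BCD, Lema-Rieusset}, and the proofs in those references proceed essentially as you do, so there is nothing to compare beyond noting the match.
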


We now recall some classical estimates on the heat kernel that we will often use in the following, see \cite{Lema-Rieusset, RRS}.
\begin{lem}\label{lem:heat kernel classica}
Let $e^{t\Delta}$ be the $3$-dimensional heat kernel. Then, for all $1\leq q\leq p\leq \infty$ and for any $t>0$ we have that
\begin{align}\label{HeatFlowBound}
\|e^{t\Delta}\, f\|_{L^p}&\leq \frac{C}{t^{\frac{3}{2}\left(\frac{1}{q}-\frac{1}{p}\right)}}\|f\|_{L^q},\\
\|e^{t\Delta}\,\nabla f\|_{L^p}&\leq \frac{C}{t^{\frac{1}{2}+\frac{3}{2}\left(\frac{1}{q}-\frac{1}{p}\right)}}\|f\|_{L^q}.
\end{align}
Moreover, for any $r\in\R$ the following bound holds
\begin{equation}
\|e^{t\Delta}\,\nabla f\|_{H^r}\leq \frac{C}{\sqrt{t}} \|f\|_{H^r}.\label{HeatFlowBound3}
\end{equation}
\end{lem}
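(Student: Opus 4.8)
The plan is to treat the two $L^p\to L^p$ bounds via the explicit Gaussian representation of the semigroup combined with Young's convolution inequality, and to treat the Sobolev bound \eqref{HeatFlowBound3} on the Fourier side via Plancherel's theorem. In all three cases the decisive mechanism is the parabolic scaling $x\sim\sqrt t$, which is what produces the negative powers of $t$.

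For the two $L^p$ estimates, recall that $e^{t\Delta}f=G_t*f$ with $G_t(x)=(4\pi t)^{-3/2}e^{-|x|^2/(4t)}$. The scaling identity $G_t(x)=t^{-3/2}G_1(x/\sqrt t)$ together with a change of variables gives, for every $1\le r\le\infty$,
\[
\|G_t\|_{L^r}=C_r\,t^{-\frac32\left(1-\frac1r\right)},\qquad
\|\nabla G_t\|_{L^r}=C_r'\,t^{-\frac12-\frac32\left(1-\frac1r\right)},
\]
where the extra factor $t^{-1/2}$ in the second norm is exactly the weight carried by one spatial derivative under parabolic scaling. Given $1\le q\le p\le\infty$, I would fix the Young exponent $r$ by $1+\frac1p=\frac1r+\frac1q$, so that $1-\frac1r=\frac1q-\frac1p\ge0$ and hence $r\ge1$ is admissible (the Gaussian lies in every $L^r$). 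Young's inequality then yields $\|e^{t\Delta}f\|_{L^p}\le\|G_t\|_{L^r}\|f\|_{L^q}$ and, after moving the derivative onto the kernel, $\|e^{t\Delta}\nabla f\|_{L^p}=\|(\nabla G_t)*f\|_{L^p}\le\|\nabla G_t\|_{L^r}\|f\|_{L^q}$. Substituting the kernel norms above and using $1-\frac1r=\frac1q-\frac1p$ produces precisely the exponents $-\frac32\big(\frac1q-\frac1p\big)$ and $-\frac12-\frac32\big(\frac1q-\frac1p\big)$.

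For the Sobolev estimate \eqref{HeatFlowBound3} I would pass to frequency space. Since $\widehat{e^{t\Delta}\nabla f}(\xi)=e^{-t|\xi|^2}(i\xi)\widehat f(\xi)$ and the norm $\|\cdot\|_{H^r}$ corresponds to the Fourier weight $(1+|\xi|^2)^{r}$, Plancherel's theorem gives
\[
\|e^{t\Delta}\nabla f\|_{H^r}^2=\int_{\R^3}(1+|\xi|^2)^{r}\,|\xi|^2 e^{-2t|\xi|^2}\,|\widehat f(\xi)|^2\,\de\xi.
\]
It then suffices to bound the scalar multiplier: writing $s=|\xi|^2$, one has $\sup_{s\ge0}s\,e^{-2ts}=(2et)^{-1}\le C/t$, so $|\xi|^2 e^{-2t|\xi|^2}\le C/t$ uniformly in $\xi$. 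Pulling this constant out of the integral and recognizing the remaining factor as $\|f\|_{H^r}^2$ gives the claim. This argument is insensitive to the sign of $r$, which is exactly why the bound is valid for all $r\in\R$.

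There is no genuine obstacle here, as these are classical facts; the only point demanding attention is the bookkeeping of the powers of $t$. One must check that the gradient contributes exactly one half-power $t^{-1/2}$ in both computations — once through $\|\nabla G_t\|_{L^r}$ and once through the $L^\infty$ bound on the symbol $|\xi|e^{-t|\xi|^2}$ — and that the Young relation $1-\frac1r=\frac1q-\frac1p$ correctly converts the kernel exponent into the stated form. The endpoint cases $q=p$ (where $r=1$ and Young's inequality is trivial) and $(q,p)=(1,\infty)$ (where $r=\infty$ and $\|G_t\|_{L^\infty}=(4\pi t)^{-3/2}$) serve as useful consistency checks.
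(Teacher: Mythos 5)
Your proof is correct. Note that the paper does not prove this lemma at all: it is recalled as a classical fact with a citation to Lemari\`e-Rieusset and Robinson--Rodrigo--Sadowski, and your argument (Gaussian kernel scaling plus Young's inequality for the $L^q\to L^p$ bounds, and Plancherel with the elementary symbol bound $\sup_{s\geq 0} s\,e^{-2ts}=(2et)^{-1}$ for the $H^r$ bound, which indeed works for every real $r$) is precisely the standard proof one finds in those references, with the powers of $t$ bookkept correctly.
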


Lastly, we recall a lemma that describes the action of the heat kernel on distributions with Fourier transforms supported in an annulus, see Lemma 2.4 in \cite{BCD}.
\begin{lem}\label{lemma chemin}
Let $\mathcal{C}\subset \R^3$ be an annulus. There exists two positive constants $c,C$ such that for any $1\leq p\leq \infty$ and any couple $(t,\lambda)$ of positive real numbers, we have
\begin{equation}
\mbox{supp }\hat{u}\subset\lambda\mathcal{C}\Longrightarrow \|e^{t\Delta}u\|_{L^p}\leq Ce^{-ct\lambda^2}\|u\|_{L^p}.
\end{equation}
\end{lem}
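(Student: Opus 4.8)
The plan is to write $e^{t\Delta}u$ as convolution against a single kernel and then reduce the whole statement, uniformly in $p$, to an $L^1$ bound on that kernel via Young's inequality. First I would fix a function $\theta\in C^\infty_c(\R^3)$ with $\theta\equiv 1$ on a neighbourhood of $\mathcal{C}$ and $\mathrm{supp}\,\theta\subset\{a\le|\eta|\le b\}$ for some $0<a<b$ (possible since an annulus is compact and bounded away from the origin). Because $\mathrm{supp}\,\hat u\subset\lambda\mathcal{C}$ we have $\hat u(\xi)=\theta(\xi/\lambda)\hat u(\xi)$, so defining $g_{t,\lambda}$ through $\widehat{g_{t,\lambda}}(\xi):=e^{-t|\xi|^2}\theta(\xi/\lambda)$ gives $\widehat{e^{t\Delta}u}=\widehat{g_{t,\lambda}}\,\hat u$, i.e. $e^{t\Delta}u=g_{t,\lambda}*u$. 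Young's inequality then yields $\|e^{t\Delta}u\|_{L^p}\le\|g_{t,\lambda}\|_{L^1}\|u\|_{L^p}$ for every $p\in[1,\infty]$ at once, so the lemma follows once I establish $\|g_{t,\lambda}\|_{L^1}\le Ce^{-ct\lambda^2}$ with $c,C$ independent of $t,\lambda,p$.

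The second step removes the parameter $\lambda$ by scaling. The substitution $\xi=\lambda\eta$ shows $g_{t,\lambda}(x)=\lambda^3 h_\tau(\lambda x)$, where $\tau:=t\lambda^2$ and $h_\tau(y):=c_0\int_{\R^3}e^{iy\cdot\eta}e^{-\tau|\eta|^2}\theta(\eta)\,\de\eta$ (with $c_0$ the inverse Fourier normalization), and the change of variables $y=\lambda x$ gives $\|g_{t,\lambda}\|_{L^1}=\|h_\tau\|_{L^1}$. The frequency $\lambda$ is thus entirely absorbed into $\tau$, and the problem reduces to the single-parameter bound $\|h_\tau\|_{L^1}\le Ce^{-c\tau}$.

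The core of the argument, and the step I expect to require the most care, is to prove this last bound by trading smoothness in $\eta$ for decay in $y$ while tracking the exponential gain in $\tau$. Integrating by parts $|\beta|$ times in $\eta$ (no boundary terms, since $\theta$ is compactly supported) gives $y^\beta h_\tau(y)=\pm c_0\int e^{iy\cdot\eta}\partial_\eta^\beta\big[e^{-\tau|\eta|^2}\theta(\eta)\big]\,\de\eta$. By Leibniz, and since each $\eta$-derivative of $e^{-\tau|\eta|^2}$ costs a factor $\lesssim(1+\tau|\eta|)$, on $\mathrm{supp}\,\theta$ one has $\big|\partial_\eta^\beta[e^{-\tau|\eta|^2}\theta]\big|\lesssim_\beta e^{-\tau a^2}(1+\tau b)^{|\beta|}$. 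Here lies the only delicate point: the polynomial-in-$\tau$ growth $(1+\tau b)^{|\beta|}$ must be defeated by the exponential decay, which works because $(1+\tau b)^{|\beta|}e^{-\tau a^2}\le C_\beta e^{-\tau a^2/2}$ uniformly in $\tau>0$. Integrating over the fixed-volume annulus gives $|y^\beta h_\tau(y)|\le C_\beta e^{-\tau a^2/2}$ for every $\beta$; taking $|\beta|=0$ together with $\sum_{|\beta|=2N}$ for some $2N>3$ produces $(1+|y|^{2N})|h_\tau(y)|\le C_N e^{-\tau a^2/2}$, hence the integrable majorant $|h_\tau(y)|\le C_N e^{-\tau a^2/2}(1+|y|^{2N})^{-1}$ on $\R^3$. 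Integrating in $y$ yields $\|h_\tau\|_{L^1}\le Ce^{-\tau a^2/2}$, and setting $c:=a^2/2$ and recalling $\tau=t\lambda^2$ completes the argument. All constants depend only on $\theta$ (hence on $\mathcal{C}$) and on $N$, and are in particular independent of $p$, $t$ and $\lambda$, as required.
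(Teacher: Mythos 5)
Your proof is correct. The paper does not prove this lemma at all --- it is quoted verbatim from Lemma 2.4 of \cite{BCD} --- and your argument (cutoff $\theta$ adapted to the annulus, writing $e^{t\Delta}u=g_{t,\lambda}*u$ and applying Young's inequality, rescaling to $\tau=t\lambda^2$, then integrating by parts so that the polynomial factors $(1+\tau b)^{|\beta|}$ are absorbed by $e^{-\tau a^2}$) is essentially the standard proof given in that reference, the only cosmetic difference being that \cite{BCD} organizes the integrations by parts via the operator $(\mathrm{Id}-\Delta_\eta)^{d}$ rather than monomial by monomial.
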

\begin{rem}
We point out that the value of $c$ and $C$ can be determined from the inner and outer radius of the annulus $\mathcal{C}$, respectively. This will be used repeatedly in the proofs below.
\end{rem}

\subsection{Zeros of a vector field}
A point $x_0\in\R^3$ is a zero of a vector field $v\in C^0(\R^3)$ if $v(x_0)=0$. A zero $x_0$ of a vector field $v\in C^1(\R^3)$ is said to be {\em non-degenerate} if $\nabla v(x_0)$ is an invertible matrix. A non-degenerate zero is said to be {\em hyperbolic} if $\nabla v(x_0)$ has no eigenvalues with zero real part. We recall that a vector field is structurally stable in a neighborhood of a hyperbolic zero. It is easy to show, using the implicit function theorem,
that hyperbolic zeros are preserved by $C^1$ perturbations. 

\subsection{Beltrami fields}\label{Sec:BeltramiTaylor}

We now introduce the main mathematical objects that we need in our constructions. These are the so-called Beltrami fields.

A vector field $B : \R^3\to\R^3$ is called a {\em Beltrami field} with frequency $\lambda$ if it is an eigenfunction of the $\curl$ operator with eigenvalue $\lambda \in \R$, i.e.
\begin{equation}\label{eq:LambdaBeltrami}
\curl B =\lambda B.
\end{equation}
We will restrict our attention to Beltrami fields of non-zero frequency ($\lambda \neq 0$). Note that \eqref{eq:LambdaBeltrami} automatically implies that $\dive B=0$.
Moreover, it is easy to check that $B$ satisfies additionally
\begin{equation}\label{BeltrProp}
(B\cdot\nabla)B=\nabla\left(\frac{|B|^2}{2}\right),\,\,\qquad
\Delta B=-\lambda^2B.
\end{equation}

From \eqref{BeltrProp} it follows that the divergence free field $e^{-t\lambda^2} B$ is a solution of the 
Navier-Stokes equation with pressure $\frac12 e^{-2 t\lambda^2} |B|^2$.

\subsection{Some results on the Navier-Stokes equations}
We recall a classical result on the existence of global strong solutions of the Navier-Stokes equations, see 
Theorem 5.6 of \cite{BCD} and Theorem 7.1 in \cite{RRS}.
\begin{thm}\label{lem:stima-ns}
Let $\nu>0$ and $u_0 \in H^{1/2}(\R^3)$ be a divergence-free vector field. There exists a 
(sufficiently small) constant 
$C$ such that if $\|u_0\|_{H^{1/2}}  \leq C \nu$, then there exists a unique solution $u\in C(\R^+; H^\frac12(\R^3))\cap L^2(\R^+;\dot{H}^{\frac32}(\R^3))$ such that
\begin{equation}
\|u(t,\cdot)\|_{H^\frac12} \leq 2C\nu.
\end{equation}
Moreover, if the initial datum $u_0\in  H^r(\R^3)$ for some $r>\frac12$ then
$$
u\in C(\R^+; H^r(\R^3))\cap L^2(\R^+;\dot{H}^{r+1}(\R^3)).
$$
\end{thm}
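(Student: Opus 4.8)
The plan is to solve the mild (Duhamel) formulation
\[
u = e^{\nu t\Delta}u_0 + B(u,u), \qquad B(u,v)(t) := -\int_0^t e^{\nu(t-s)\Delta}\,\mathbb{P}\,\dive(u\otimes v)(s)\,\de s,
\]
by a fixed-point argument in the scaling-critical resolution space and then to bootstrap the regularity. Since $\dot H^{1/2}(\R^3)$ is invariant under the Navier--Stokes scaling \eqref{scalingDatum}, the natural space is $X := L^\infty(\R^+;\dot H^{1/2})\cap L^2(\R^+;\dot H^{3/2})$, equipped with the viscosity-adapted norm
\[
\|u\|_X := \sup_{t>0}\|u(t)\|_{\dot H^{1/2}} + \Big(\nu\int_0^\infty \|u(t)\|_{\dot H^{3/2}}^2\,\de t\Big)^{1/2},
\]
the weight $\nu$ being chosen precisely so that $\|\cdot\|_X$ is itself scale invariant.

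First I would establish the linear estimate $\|e^{\nu t\Delta}u_0\|_X\lesssim \|u_0\|_{\dot H^{1/2}}$. The supremum part is immediate since the heat semigroup is a contraction on every $\dot H^s$; for the second part one computes on the Fourier side $\nu\int_0^\infty\int_{\R^3}|\xi|^3 e^{-2\nu t|\xi|^2}|\widehat{u_0}(\xi)|^2\,\de\xi\,\de t = \tfrac14\|u_0\|_{\dot H^{1/2}}^2$, the factor $\nu$ being exactly cancelled by the time integration. Thus the free evolution $y:=e^{\nu t\Delta}u_0$ satisfies $\|y\|_X\le c_0\|u_0\|_{\dot H^{1/2}}$ with an absolute constant $c_0$.

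The core is the bilinear estimate $\|B(u,v)\|_X\le C\nu^{-1}\|u\|_X\|v\|_X$. Using the product law $\|fg\|_{\dot H^{1/2}}\lesssim \|f\|_{\dot H^1}\|g\|_{\dot H^1}$ valid in $\R^3$, the interpolation $\|f\|_{\dot H^1}^2\le \|f\|_{\dot H^{1/2}}\|f\|_{\dot H^{3/2}}$, and Cauchy--Schwarz in time, one first bounds the source term $u\otimes v$ in $L^2(\R^+;\dot H^{1/2})$ by the product of the $X$-norms (up to powers of $\nu$). Then the maximal-regularity/smoothing property of the Duhamel operator $f\mapsto \int_0^t e^{\nu(t-s)\Delta}\mathbb{P}\,\dive f(s)\,\de s$, which maps $L^2(\R^+;\dot H^{1/2})$ into $X$ and is quantified by the one-derivative heat bound \eqref{HeatFlowBound3}, supplies the remaining smoothing; tracking the powers of $\nu$ throughout yields the prefactor $1/\nu$. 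With the linear and bilinear bounds in hand, the standard fixed-point lemma in Banach spaces (if $\|B(u,v)\|_X\le\eta\|u\|_X\|v\|_X$ and $\|y\|_X\le\delta$ with $4\eta\delta<1$, then $u=y+B(u,u)$ has a unique solution with $\|u\|_X\le 2\delta$) applies with $\eta\simeq\nu^{-1}$ and $\delta\simeq\|u_0\|_{\dot H^{1/2}}$. The hypothesis $\|u_0\|_{H^{1/2}}\le C\nu$ with $C$ small then forces $4\eta\delta<1$, giving the unique global solution $u\in X$ with $\|u(t)\|_{H^{1/2}}\le 2C\nu$; continuity $u\in C(\R^+;H^{1/2})$ follows from the continuity in time of each Duhamel term.

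Finally, for the propagation of higher regularity I would run a subcritical a priori estimate: testing the equation against $|D|^{2r}u$ (or differentiating the mild formulation) produces $\tfrac{\de}{\de t}\|u\|_{\dot H^r}^2 + 2\nu\|u\|_{\dot H^{r+1}}^2 \lesssim$ a trilinear term that, by the same product laws, is controlled by $\|u\|_{\dot H^{1/2}}\|u\|_{\dot H^{r+1}}^2$. Since the critical norm $\|u\|_{\dot H^{1/2}}\le 2C\nu$ is already small, this trilinear term is absorbed into the dissipation, yielding the global bound $u\in C(\R^+;H^r)\cap L^2(\R^+;\dot H^{r+1})$ whenever $u_0\in H^r$, $r>1/2$. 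The main obstacle is the bilinear estimate: one must select the critical space and follow the viscosity through the product law and the heat-kernel smoothing so that the contraction constant comes out exactly of order $1/\nu$, matching the required smallness $\|u_0\|_{H^{1/2}}\lesssim\nu$; because the Duhamel time-integral is only borderline integrable, the smoothing gain furnished by \eqref{HeatFlowBound3} must be spent sharply.
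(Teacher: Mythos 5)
This theorem is not proved in the paper at all: it is recalled as a classical result, with the proof deferred to Theorem 5.6 of \cite{BCD} and Theorem 7.1 of \cite{RRS}. Your proposal is exactly the classical Fujita--Kato scheme that those references implement (fixed point in $L^\infty(\R^+;\dot H^{1/2})\cap L^2(\R^+;\dot H^{3/2})$ with a $\nu$-weighted norm, bilinear estimate with prefactor $\nu^{-1}$, then propagation of $H^r$ regularity by absorbing the trilinear term into the dissipation using smallness of the critical norm), so there is nothing to contrast: the route is the standard one and the architecture is sound. Two details deserve care when you write it out. First, the $L^\infty_t\dot H^{1/2}$ bound for the Duhamel term cannot be closed by applying \eqref{HeatFlowBound3} under the time integral and then Cauchy--Schwarz, since $\int_0^t(t-s)^{-1}\,\de s$ diverges; you must instead argue frequency-by-frequency on the Fourier side, where Cauchy--Schwarz in time gives $\bigl(\int_0^t e^{-2\nu(t-s)|\xi|^2}\de s\bigr)^{1/2}\leq(2\nu)^{-1/2}|\xi|^{-1}$ and the lost derivative is recovered exactly (equivalently, prove the $L^2_t\dot H^{3/2}$ maximal-regularity bound first and deduce the $L^\infty_t\dot H^{1/2}$ bound by testing the equation) --- you flag this borderline issue, but the tool you cite is not the one that resolves it. Second, the statement is in the inhomogeneous space $H^{1/2}$: the fixed point only controls $\dot H^{1/2}$, and the $L^2$ component of the norm should be closed separately via the energy identity, which also fixes the constant in $\|u(t,\cdot)\|_{H^{1/2}}\leq 2C\nu$; likewise, your trilinear bound $\lesssim\|u\|_{\dot H^{1/2}}\|u\|_{\dot H^{r+1}}^2$ is valid, but only because both paraproduct terms can put the low-frequency factor in $\dot H^{1/2}$ (all three factors being the same $u$), so the paraproduct decomposition should be made explicit there.
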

For our purposes we will need a quantitative estimate on the $H^r$-norm of the solution provided by Theorem \ref{lem:stima-ns} of the type
\begin{equation}\label{eq:stima Hs quantitativa}
\|u(t,\cdot)\|_{H^r} \leq C_r \|u_0\|_{H^r}.
\end{equation}
We will prove this estimate under the assumptions of {\bf Theorem 2} in Lemma \ref{stima norme ns} in the Appendix. 

We conclude this section by recalling the main result of \cite{ChG09}. Define the space-time norm
\begin{equation}\label{CGSmallCond}
\| F \|_{E} := \int_{0}^{\infty} \| F(t,\cdot) \|_{\dot{B}^{-1}_{\infty, 1}} dt + 
\sum_{j \in \mathbb{Z}} 2^{-j} \left( \int_{0}^{\infty} \| \Delta_j F(t,\cdot) \|^2_{L^{\infty}} t \, dt \right)^{1/2},
\end{equation}
and the quantity
$$
\Omega_{u_0} := 
(e^{t\Delta} u_0 \cdot \nabla ) e^{t\Delta} u_0  .
$$

\begin{thm}\cite{ChG09}\label{teo:Chemin-Gallagher}
There is a constant $C^*>0$ such that the following result holds.
Let $u_0\in \dot{H}^{\frac12}(\R^3)$ be a divergence free vector field. Suppose that
\begin{equation}\label{SmallnessConditionCG}
\| \mathbb{P} \, \Omega_{u_0} \|_{E} \leq \frac{1}{C^*} \exp\left(-C^*  \| u_0\|_{\dot{B}^{-1}_{\infty, 2}}^4  \right).
\end{equation}
Then, there is  a unique, global solution $u$ to the Navier--Stokes equations 
associated to $u_0$, satisfying
$$
u\in C_b(\R^+; \dot{H}^{\frac12}(\R^3))\cap L^2(\R^+; \dot{H}^{\frac32}(\R^3)).
$$
\end{thm}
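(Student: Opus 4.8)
The plan is to regard the solution as a perturbation of the free evolution and to absorb the only genuinely large object, $e^{t\Delta}u_0$, through a Gronwall argument rather than through smallness. First I would write $u=e^{t\Delta}u_0+w$ and, using that $u_L:=e^{t\Delta}u_0$ solves the heat equation, derive the equation for the fluctuation $w$, with vanishing initial datum,
\begin{equation*}
\partial_t w-\Delta w+\mathbb{P}\,\dive(w\otimes w)+\mathbb{P}\,\dive(u_L\otimes w+w\otimes u_L)=-\mathbb{P}\,\Omega_{u_0},
\end{equation*}
where I used $\dive u_L=0$ to identify $\mathbb{P}\,\dive(u_L\otimes u_L)=\mathbb{P}\,\Omega_{u_0}$. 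In this way the hypothesis \eqref{SmallnessConditionCG} becomes precisely a smallness assumption, in the norm $\|\cdot\|_E$, on the forcing term of the $w$-equation.

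Next I would fix the functional framework. The norm $\|\cdot\|_E$ in \eqref{CGSmallCond} is designed so that the Duhamel operator $F\mapsto\int_0^t e^{(t-s)\Delta}\mathbb{P}\,\dive F(s)\,\de s$ maps forcings of finite $E$-norm continuously into the scaling-critical solution space $X:=C_b(\R^+;\dot H^{1/2})\cap L^2(\R^+;\dot H^{3/2})$. Together with the classical bilinear continuity $\|\mathcal{B}(w,w)\|_X\lesssim\|w\|_X^2$, where $\mathcal{B}(f,g):=\int_0^t e^{(t-s)\Delta}\mathbb{P}\,\dive(f\otimes g)\,\de s$, this reduces both the quadratic self-interaction $\dive(w\otimes w)$ and the forcing $\mathbb{P}\,\Omega_{u_0}$ to a standard Kato-type fixed point, as soon as $w$ is small in $X$.

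The \emph{crux}, and the only point where largeness enters, is the linear cross term $\mathbb{P}\,\dive(u_L\otimes w+w\otimes u_L)$, which cannot be treated perturbatively since $u_L$ is large. I would control it by an a priori energy estimate for $w$ in $\dot H^{1/2}$ resting on two structural facts. First, the transport part $(u_L\cdot\nabla)w$ contributes nothing at top order because $\dive u_L=0$: the leading piece integrates to zero and only a lower-order commutator survives. Second, the surviving stretching term $(w\cdot\nabla)u_L$ and the commutators are bounded by $\|u_L\|_Y\,\|w\|_{\dot H^{1/2}}\|w\|_{\dot H^{3/2}}$ for a critical space $Y$ whose time integral is governed by $\|u_0\|_{\dot B^{-1}_{\infty,2}}$, via the heat characterization $\|u_0\|^2_{\dot B^{-1}_{\infty,2}}\simeq\int_0^\infty\|e^{t\Delta}u_0\|^2_{L^\infty}\,\de t$ recalled above. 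After absorbing the $\dot H^{3/2}$ factor by Young's inequality, a Gronwall argument yields a bound for $w$ carrying the amplification factor $\exp\!\big(C^*\|u_0\|^4_{\dot B^{-1}_{\infty,2}}\big)$ (the fourth power coming from splitting the time integral by Cauchy--Schwarz into two factors of the $\dot B^{-1}_{\infty,2}$ norm), and the hypothesis \eqref{SmallnessConditionCG} is calibrated exactly to defeat this factor.

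Finally I would close the argument by a fixed-point/continuation scheme: letting $\Phi(w)$ denote the Duhamel representation of the forcing, the cross terms and the self-interaction, I would show that on a ball of $X$ of radius comparable to the right-hand side of \eqref{SmallnessConditionCG} the map $\Phi$ is a contraction, the large linear interaction being absorbed into the exponential Gronwall weight and the quadratic part by the bilinear estimate. This produces a unique global $w\in X$, hence a unique global solution $u=u_L+w\in C_b(\R^+;\dot H^{1/2})\cap L^2(\R^+;\dot H^{3/2})$. I expect the main obstacle to be exactly the treatment of the large linear term: a naive Picard iteration diverges because $\|u_L\|$ is not small, and the entire content of the theorem is to convert this largeness, via the divergence-free cancellation of the transport term, into a controlled exponential weight, so that only the nonlinear (exponentially weighted) smallness \eqref{SmallnessConditionCG} of the forcing is needed.
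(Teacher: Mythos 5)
First, a point of reference: the paper does not prove this statement at all — Theorem \ref{teo:Chemin-Gallagher} is quoted from \cite{ChG09} and used as a black box — so your proposal has to be measured against the Chemin--Gallagher argument itself. Your overall strategy is indeed theirs: write $u=e^{t\Delta}u_0+w$, observe that \eqref{SmallnessConditionCG} is a smallness condition on the forcing $-\mathbb{P}\,\Omega_{u_0}$ of the fluctuation equation, exploit the divergence-free cancellation in the transport term $(u_L\cdot\nabla)w$ so that only $\|u_L\|_{L^\infty}$-type quantities survive (via a commutator), and close with a Gronwall weight $\exp\bigl(C\|u_0\|^4_{\dot B^{-1}_{\infty,2}}\bigr)$ that the hypothesis is calibrated to defeat. (A cosmetic remark: the fourth power does not come from a Cauchy--Schwarz splitting of the time integral, but from the fact that the trilinear estimate carries the powers $\|u_L\|_{L^\infty}\|w\|^{1/2}_{\dot H^{1/2}}\|w\|^{3/2}_{\dot H^{3/2}}$, so Young's inequality produces $\int_0^\infty\|u_L\|^4_{L^\infty}\,\de t\leq \|u_0\|^2_{\dot B^{-1}_{\infty,\infty}}\|u_0\|^2_{\dot B^{-1}_{\infty,2}}$ in the Gronwall exponent.)

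The genuine gap is your second step: the claim that $F\mapsto\int_0^te^{(t-s)\Delta}\mathbb{P}\,\dive F\,\de s$ maps $\{\|F\|_E<\infty\}$ continuously into $X=C_b(\R^+;\dot H^{1/2})\cap L^2(\R^+;\dot H^{3/2})$ is false, and with it the contraction ``on a ball of $X$ of radius comparable to the right-hand side of \eqref{SmallnessConditionCG}.'' The norm $\|\cdot\|_{E}$ in \eqref{CGSmallCond} is built exclusively from $L^\infty_x$-based quantities, and on $\R^3$ these cannot control any $L^2_x$-based norm: testing with $F(s,x)=e^{ix\cdot\xi_0}\chi(x/R)\mathbf{1}_{[0,1]}(s)$ gives an $E$-norm bounded uniformly in $R$ while the Duhamel term has $\dot H^{1/2}$ norm of order $R^{3/2}$. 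Consequently, under the hypothesis the Duhamel term of the forcing (the second Picard correction, call it $w_1$) is \emph{not} small in $X$: it is finite only thanks to the extra information $u_0\in\dot H^{1/2}$, with a bound of size $\|u_0\|_{\dot H^{1/2}}\|u_0\|_{\dot B^{-1}_{\infty,2}}$, which is large, so your fixed point cannot close. This is precisely the difficulty the Chemin--Gallagher scheme is built around: one must split further, $w=w_1+w_2$, estimate $w_1$ only in the $L^\infty_x$-based space-time norms that $\|\cdot\|_E$ genuinely controls (namely $\int_0^\infty\|w_1\|^2_{L^\infty}\,\de t$ and $\int_0^\infty\|\nabla w_1\|_{L^\infty}\,\de t$, obtained from heat smoothing and the weight $t$ in \eqref{CGSmallCond}), and then run the $\dot H^{1/2}$ energy/Gronwall argument on $w_2$ alone, using the smallness of those $L^\infty$ norms of $w_1$ both as additional Gronwall weights and — paired with the finite (not small) critical energy norms of $u_L$ — to render the forcing of the $w_2$-equation genuinely small in the dual energy space. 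Your sketch, as written, silently assumes the conclusion of this step.
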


\section{Solutions with large data} 
The aim of this section is to prove {\bf Theorem 1}. First of all we provide the construction and the main properties of the class of initial data that we will consider in {\bf Theorem 1}. Such initial data  comes from a suitable localization of a Beltrami field $B_{\lambda}$ with frequency 
$\lambda$ and with $\| B_{\lambda} \|_{L^{\infty}} < \infty$.  By rescaling, there is no loss of generality to restrict to  $\| B_{\lambda} \|_{L^{\infty}} = 1$ and to encode the $L^\infty$ size of the initial datum in a constant $M>0$. Recall that if $B$ is Beltrami so it is $\rho B$, for any $\rho\in\R$. For simplicity, in this section we are going to set $\nu=1$.

\subsection{Construction of the initial data}
As already mentioned in the introduction, our initial data, coming from the localization of $B_\lambda$, have the following form
\begin{equation}\label{LocBeltramiAgain}
u_0 := M  \curl(\phi_{L} B_{\lambda}) , \qquad M, L > 0,
\end{equation}
where the localizing function $\phi_{L}$ is defined as
$$
\phi_L = \mathcal{D}_L \phi, \qquad 
\mathcal{D}_L : f  \to f\left( \frac{\cdot}{L} \right),  
$$
for some $\phi \in W^{s,1} \cap W^{s, \infty}(\R^3)$; we will choose the regularity $s$ to be very large. The goal of this subsection is to prove Lemma \ref{ForcingTermLemma}: it will be the main tool to show that our initial data satisfies the nonlinear smallness condition~\eqref{NonlSmallIntro} and thus prove {\bf Theorem 1}.\\

In Lemma~\ref{ForcingTermLemma} we establish an $L^p$ estimate for the quantity 
\begin{equation}\label{fdnsjkdjngjkdsljdfgn}
\mathbb{P} \, (e^{t\Delta} u_0 \cdot \nabla ) e^{t\Delta} u_0,  
\qquad u_0 := M  \curl(\phi_{L} B_{\lambda}) ,
\end{equation}
with a suitable time decay (we recall that $\mathbb{P}$ denotes the classical Leray projector). To do so, using the 
algebraic properties of Beltrami fields we will rewrite
$$
(e^{t\Delta} u_0 \cdot \nabla ) e^{t\Delta} u_0 \simeq \nabla A + \text{Remainders} ,
$$
in such a way that we will reduce the estimate of the $L^p$ norm of $\mathbb{P} \, (e^{t\Delta} u_0 \cdot \nabla ) e^{t\Delta} u_0$ to that of a sum 
of $L^p$ norms of remainders terms. We will identify terms as remainders if 
\begin{itemize}
\item at least one partial derivative falls on $\phi_L$;
\item they have a suitable commutator structure, associated to the operator 
$[e^{t\Delta} , \phi_L]$. 
\end{itemize}
In the first case we will gain a factor $L^{-1}$ by $\partial_j \phi_L$, in the second case we will gain this factor from the commutator using Lemma \ref{Lemma:dec}. \\

We are now ready to prove the lemmas which will allow us to handle the remainder terms.

\begin{lem}\label{LemmaT1}
Let $B_{\lambda} \in L^{q} \cap L^{\infty}(\R^3)$ be a Beltrami field with frequency $\lambda$, $\| B_{\lambda}\|_{L^{\infty}} =1$ and $\| B_{\lambda}\|_{L^{q}} = R$, and let $\phi \in W^{s,1} \cap W^{s, \infty}(\R^3)$.
Then, for any $1 \leq p \leq q \leq \infty$ and for all $0 < \varepsilon \ll 1 $ one can take sufficiently large values of $s$ and of the dilation factor~$L$ such that 
\begin{equation}\label{CommBound2}
\| e^{t\Delta} ( \nabla\phi_L\wedge B_{\lambda} )  \|_{L^p} \lesssim 
\left\{ \begin{array}{ll}
t^{-\frac32\left( 1 - \frac1p \right)} L^{- \theta s} & \mbox{for $t > L^{\varepsilon}$}, \\
R L^{-1 + \frac{3}{p} - \frac{3}{q}} & \mbox{for $t \leq L^{\varepsilon}$} ,
\end{array}
\right. 
\end{equation}\
where $\theta \in (0,1)$ is an absolute constant.
\end{lem}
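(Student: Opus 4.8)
The plan is to handle the two time regimes by entirely different mechanisms: for $t\le L^\varepsilon$ a soft argument (boundedness of the heat semigroup plus H\"older), and for $t>L^\varepsilon$ an argument exploiting the exact spectral localization of $B_\lambda$ on the sphere $\{|\xi|=|\lambda|\}$ together with the heat decay on frequencies of size $\sim|\lambda|$.

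\textbf{Short time.} For $t\le L^\varepsilon$ I would discard the heat flow via \eqref{HeatFlowBound} with $q=p$ (boundedness of $e^{t\Delta}$ on $L^p$), reducing matters to $\|\nabla\phi_L\wedge B_\lambda\|_{L^p}$. Bounding the wedge product pointwise by $|\nabla\phi_L|\,|B_\lambda|$ and applying H\"older with exponents $r,q$, where $1/r=1/p-1/q\ge0$, together with the scaling identity $\|\nabla\phi_L\|_{L^r}=L^{3/r-1}\|\nabla\phi\|_{L^r}$ and $3/r=3/p-3/q$, yields exactly $R\,L^{-1+3/p-3/q}$; the constant $\|\nabla\phi\|_{L^r}$ is finite since $\phi\in W^{s,1}\cap W^{s,\infty}$. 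This bound in fact holds for every $t$, and its only further role is to control polynomial-in-$L$ factors in the long-time regime.

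\textbf{Long time.} Fix the threshold $\rho=|\lambda|/2$ and split $f:=\nabla\phi_L\wedge B_\lambda=P_{\le\rho}f+P_{>\rho}f$. For the high part I would decompose dyadically and apply Lemma~\ref{lemma chemin} on each annulus, summing the exponential factors to obtain $\|e^{t\Delta}P_{>\rho}f\|_{L^p}\lesssim e^{-ct\lambda^2}\|f\|_{L^p}$; since $t>L^\varepsilon$, the factor $e^{-ct\lambda^2}$ dominates both the extracted weight $t^{3/2(1-1/p)}$ and the polynomial factor $\|f\|_{L^p}\lesssim R\,L^{-1+3/p-3/q}$, producing $t^{-3/2(1-1/p)}L^{-\theta s}$. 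For the low part I would use the key spectral observation: writing $\nabla\phi_L=g_{\mathrm{low}}+g_{\mathrm{high}}$ with $g_{\mathrm{low}}=P_{\le|\lambda|/4}\nabla\phi_L$, the product $g_{\mathrm{low}}\wedge B_\lambda$ has Fourier support in the annulus $\{3|\lambda|/4\le|\xi|\le 5|\lambda|/4\}$ and is therefore annihilated by $P_{\le\rho}$. Hence $P_{\le\rho}f=P_{\le\rho}(g_{\mathrm{high}}\wedge B_\lambda)$, and $L^1$-boundedness of $P_{\le\rho}$ gives $\|P_{\le\rho}f\|_{L^1}\le\|g_{\mathrm{high}}\|_{L^1}\|B_\lambda\|_{L^\infty}$; heat smoothing \eqref{HeatFlowBound} from $L^1$ to $L^p$ then yields $\|e^{t\Delta}P_{\le\rho}f\|_{L^p}\lesssim t^{-3/2(1-1/p)}\|g_{\mathrm{high}}\|_{L^1}$.

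The crux, and the only genuinely quantitative point, is the tail estimate $\|g_{\mathrm{high}}\|_{L^1}\lesssim L^{-\theta s}$. I would deduce it from the Bernstein inequality \eqref{BernsteinIneq2} applied with $\rho=|\lambda|/4$ and an \emph{even} integer $k\le s-1$, chosen so that $|D|^k=(-\Delta)^{k/2}$ is a bona fide differential operator (thereby avoiding the failure of the Riesz transforms on $L^1$): this gives $\|g_{\mathrm{high}}\|_{L^1}\lesssim|\lambda|^{-k}\|\nabla^{k+1}\phi_L\|_{L^1}=|\lambda|^{-k}L^{2-k}\|\nabla^{k+1}\phi\|_{L^1}$, which is $\lesssim L^{-\theta s}$ for any fixed $\theta\in(0,1)$ once $s$ and then $L$ are taken large (take $k\approx s$). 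This is precisely where the regularity of $\phi$ is consumed; the whole difficulty is organizing the three cut-offs so that the only surviving contribution to the low-frequency part of $f$ is the high-frequency tail of $\nabla\phi_L$, whose $L^1$ norm decays like an arbitrarily large power of $L^{-1}$.
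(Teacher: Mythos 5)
Your argument is correct and is essentially the paper's proof: the short-time bound (heat-semigroup boundedness on $L^p$, H\"older with $1/r=1/p-1/q$, dilation scaling) is identical, and your long-time bound rests on exactly the same two mechanisms the paper uses, namely the spectral localization of $\widehat{B_\lambda}$ on $\{|\xi|=|\lambda|\}$ combined with Lemma \ref{lemma chemin} to produce the exponential factor $e^{-ct\lambda^2}$, and the $L^1$ Bernstein tail estimate \eqref{BernsteinIneq2} on the high frequencies of $\nabla\phi_L$ followed by $L^1\to L^p$ heat smoothing to produce $t^{-\frac32\left(1-\frac1p\right)}L^{-\theta s}$. The only difference is bookkeeping: the paper frequency-splits the localizer $\mathcal{D}_L\nabla\phi$ at threshold $\lambda/2$ (low part handled by Lemma \ref{lemma chemin} via the annulus support of the product, high part by smoothing plus Bernstein), whereas you split the product $f=\nabla\phi_L\wedge B_\lambda$ and then reduce its low-frequency part back to the high-frequency tail of the localizer, so the pieces and estimates correspond one-to-one.
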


\begin{proof}
Throughout the proof we will choose $s$ sufficiently large so that all the Sobolev norms of $\phi$ 
that we need are finite. We have

$$
 \| e^{t\Delta} ( \nabla\phi_L\wedge B_{\lambda} )  \|_{L^{p}} =
 L^{-1} \| e^{t\Delta} ( (\mathcal{D}_L  \nabla \phi) \wedge B_{\lambda} )  \|_{L^{p}},
$$
where we used $$\nabla \phi_L  = \nabla \mathcal{D}_L \phi = L^{-1} \mathcal{D}_L  \nabla \phi.$$ 

We first focus on short times $t \leq L^{\varepsilon}$.  
We use the properties of the heat kernel and H\"older inequality to obtain the estimate 
$$
L^{-1} \| e^{t\Delta} ( (  \mathcal{D}_L    \nabla \phi) \wedge B_{\lambda} ) \|_{L^{p}}
\leq  L^{-1} \|  \mathcal{D}_L   \nabla \phi \|_{L^{\frac{pq}{q-p}}} \| B_{\lambda}  \|_{L^{q}} 
\lesssim R L^{-1 + \frac{3}{p} - \frac{3}{q}} \|  \nabla \phi \|_{L^{\frac{pq}{q-p}}} 
\lesssim R L^{-1 + \frac{3}{p} - \frac{3}{q}}.
$$
To handle large times $t > L^{\varepsilon}$ we will distinguish the small and large frequencies of the localizing function~$\mathcal{D}_L \nabla \phi$.
Thus we (smoothly) project $\mathcal{D}_L \nabla \phi$ as follows
\begin{equation}\label{ProjectionLF}
\mathcal{D}_L\nabla \phi = P_{\leq \lambda/2} (\mathcal{D}_L  \nabla \phi)  +
P_{> \lambda/2} (\mathcal{D}_L  \nabla \phi), 
\end{equation}
where $\widehat{P_{ \leq \rho} f} := (D_{\rho} \chi) \widehat{f}$ with $\chi \geq 0$ being a smooth cut-off of the ball 
$B_{1/2}(0)$ (see Section \ref{Sec:Decoupling}). The complementary 
projection is defined as  $P_{> \rho} := 1 - P_{\leq \rho}$.

On large frequencies of $\mathcal{D}_L \nabla \phi$ (and large times) we estimate
\begin{align}
 L^{-1} \| e^{t\Delta} ( ( P_{> \lambda/2} \mathcal{D}_L   \nabla \phi) \wedge B_{\lambda} ) \|_{L^{p}} 
& \leq 
  t^{- \frac{3}{2} \left( 1- \frac1p \right)} L^{-1} \| P_{> \lambda/2} \mathcal{D}_L   \nabla \phi \|_{L^1} \| B_{\lambda}  \|_{L^{\infty}} \nonumber
\\ \nonumber
& 
 \lesssim t^{- \frac{3}{2} \left( 1- \frac1p \right)} L^{-1} \lambda^{-k} 
  \| P_{> \lambda/2}|D|^k \mathcal{D}_L   \nabla \phi \|_{L^1} \| B_{\lambda}  \|_{L^{\infty}}
\\ \nonumber
& 
\lesssim  t^{- \frac{3}{2} \left( 1- \frac1p \right)} L^{-1 -k} \lambda^{-k} 
  \|  P_{> \lambda/2} \mathcal{D}_L  |D|^k \nabla \phi \|_{L^1} \| B_{\lambda}  \|_{L^{\infty}}
\\ \nonumber
& 
\lesssim
  t^{- \frac{3}{2} \left( 1- \frac1p \right)} L^{2 - k} \lambda^{-k} 
  \|   |D|^k \nabla \phi \|_{L^1} \| B_{\lambda}  \|_{L^{\infty}}
 \\ 
& \lesssim   t^{- \frac{3}{2} \left( 1- \frac1p \right)} L^{2-k} \lambda^{-k}  ,
\end{align}
where we used, in order, Lemma \ref{lem:heat kernel classica}, the Bernstein inequality \eqref{BernsteinIneq2} 
and the properties of the dilation operator. This is compatible with \eqref{CommBound2}
 choosing $s=Ck$ with $\theta<1/C$ and taking $L \gg \lambda + \lambda^{-1}$.

For small frequencies of $\mathcal{D}_L \nabla \phi$ and large times we use the fact that 
$$
\text{supp\,}\mathcal{F} \left( (P_{\leq \lambda/2 } \mathcal{D}_L    \nabla \phi ) \wedge B_{\lambda} \right)
\subset \left\{ \frac{\lambda}{2} \leq |\xi| \leq \frac32 \lambda \right\},
$$
where $\mathcal{F}$ denotes the Fourier transform. By this support condition and Lemma \ref{lemma chemin} we have 
\begin{align}
  L^{-1}  \| e^{t\Delta}  ( ( P_{\leq \lambda/2} \mathcal{D}_L   \nabla \phi)  \wedge B_{\lambda} ) \|_{L^{p}} 
 &\lesssim    L^{-1} e^{- \frac{t}{4} \lambda^2} 
\|  ( P_{\leq \lambda/2} \mathcal{D}_L   \nabla \phi) \wedge B_{\lambda}  \|_{L^{p}}  \nonumber
\\ \nonumber
&
\lesssim  L^{-1}  e^{- \frac{L^{\varepsilon}}{4} \lambda^2} e^{- \frac{t}{8} \lambda^2} 
\|   \mathcal{D}_L  \nabla \phi \|_{L^{p}} \| B_{\lambda}  \|_{L^{\infty}}  
\\ \nonumber
&
\lesssim   L^{-1 + \frac{3}{p}}   e^{- \frac{L^{\varepsilon}}{4} \lambda^2} e^{- \frac{t}{8} \lambda^2}
\|   \nabla \phi \|_{L^{p}} \| B_{\lambda}  \|_{L^{\infty}}  
\\
&\lesssim  L^{-1}  e^{- \frac{L^{\varepsilon}}{4} \lambda^2} e^{- \frac{t}{8} \lambda^2}.
\end{align}
These estimates lead to the desired statement taking $L \gg \lambda + \lambda^{-1} $. 
\end{proof}

The following lemma provides the bound for the commutator expression.

\begin{lem}\label{Lemma:dec}  
Let $B_{\lambda} \in L^q \cap L^{\infty}(\R^3)$ be a Beltrami field with frequency $\lambda$, $\| B_{\lambda}\|_{L^{\infty}} =1$
and $\| B_{\lambda}\|_{L^{q}} = R$, and let $\phi \in W^{s,1} \cap W^{s, \infty}(\R^3)$. 
Then, for any $1 \leq p \leq q \leq \infty$  and for all $0 < \varepsilon \ll 1 $ one can take sufficiently large values of  $s$ and of the dilation factor~$L$ such that 
\begin{equation}\label{CommBound} 
\| [e^{t\Delta} , \phi_L]  B_{\lambda}  \|_{L^p} \lesssim 
\left\{ \begin{array}{ll}
t^{-\frac32\left( 1 - \frac1p \right)} L^{- \theta s}  & \mbox{for $t > L^{\varepsilon}$} \\
  R L^{-1 + \frac{\varepsilon}{2} + \frac{3}{p} - \frac{3}{q}} & \mbox{for $t \leq L^{\varepsilon}$} ,
\end{array}
\right. 
\end{equation}\
where $\theta \in (0,1)$ is an absolute constant.  
\end{lem}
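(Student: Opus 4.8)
The plan is to exploit two equivalent representations of the commutator, each adapted to one of the two time regimes, together with the defining spectral properties of the Beltrami field, namely that $e^{t\Delta}B_\lambda = e^{-t\lambda^2}B_\lambda$ and that $\widehat{B_\lambda}$ is supported on the sphere $\{|\xi|=\lambda\}$ (both immediate consequences of $\Delta B_\lambda=-\lambda^2 B_\lambda$). Writing the heat semigroup as convolution with the Gaussian $G_t$, the commutator becomes
$$
[e^{t\Delta},\phi_L]B_\lambda(x) = \int_{\R^3}G_t(y)\,\big(\phi_L(x-y)-\phi_L(x)\big)\,B_\lambda(x-y)\,\de y ,
$$
and, alternatively, using the Beltrami identity,
$$
[e^{t\Delta},\phi_L]B_\lambda = e^{t\Delta}(\phi_L B_\lambda) - e^{-t\lambda^2}\phi_L B_\lambda .
$$
I would use the first form for short times and the second for large times.

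For short times $t\leq L^{\varepsilon}$ I would Taylor-expand $\phi_L(x-y)-\phi_L(x)=-\int_0^1 y\cdot\nabla\phi_L(x-\sigma y)\,\de\sigma$, which extracts exactly one derivative of $\phi_L$ and hence a factor $L^{-1}$, since $\nabla\phi_L=L^{-1}\mathcal{D}_L\nabla\phi$. Taking the $L^p$ norm, Minkowski's integral inequality and H\"older (splitting $L^{pq/(q-p)}$ for $\nabla\phi_L$ and $L^q$ for $B_\lambda$, and using translation invariance to discard the shifts) give
$$
\|[e^{t\Delta},\phi_L]B_\lambda\|_{L^p}\lesssim R\,\|\nabla\phi_L\|_{L^{pq/(q-p)}}\int_{\R^3}G_t(y)\,|y|\,\de y .
$$
The dilation scaling yields $\|\nabla\phi_L\|_{L^{pq/(q-p)}}=L^{-1+3/p-3/q}\|\nabla\phi\|_{L^{pq/(q-p)}}$, finite for $s$ large, while the first moment of the Gaussian satisfies $\int_{\R^3}G_t(y)|y|\,\de y\simeq\sqrt t\leq L^{\varepsilon/2}$. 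Multiplying these produces the claimed short-time bound $R\,L^{-1+\varepsilon/2+3/p-3/q}$; the extra factor $L^{\varepsilon/2}$ relative to Lemma \ref{LemmaT1} is precisely this Gaussian first moment.

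For large times $t>L^{\varepsilon}$ I would mirror the architecture of Lemma \ref{LemmaT1}. The term $e^{-t\lambda^2}\phi_L B_\lambda$ is harmless: one has $\|e^{-t\lambda^2}\phi_L B_\lambda\|_{L^p}\leq e^{-t\lambda^2}L^{3/p}\|\phi\|_{L^p}$, and splitting $e^{-t\lambda^2}=e^{-t\lambda^2/2}e^{-t\lambda^2/2}$, one factor beats any polynomial in $t$ while the other, evaluated at $t>L^{\varepsilon}$, beats $L^{3/p+\theta s}$. For $e^{t\Delta}(\phi_L B_\lambda)$ I would decompose $\phi_L=P_{\leq\lambda/2}\phi_L+P_{>\lambda/2}\phi_L$. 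On the low-frequency piece the product $P_{\leq\lambda/2}\phi_L\cdot B_\lambda$ has Fourier transform supported in the annulus $\{\lambda/2\leq|\xi|\leq\tfrac32\lambda\}$ (low frequencies convolved with the sphere of radius $\lambda$), so Lemma \ref{lemma chemin} supplies a gain $e^{-ct\lambda^2}$, again super-exponentially small for $t>L^{\varepsilon}$. On the high-frequency piece I would use the heat-kernel smoothing $\|e^{t\Delta}f\|_{L^p}\lesssim t^{-\frac32(1-1/p)}\|f\|_{L^1}$, followed by H\"older and the Bernstein inequality \eqref{BernsteinIneq2}, landing on $t^{-\frac32(1-1/p)}\lambda^{-k}L^{3-k}\||D|^k\phi\|_{L^1}$; choosing $s=Ck$ with $\theta<1/C$ and $L\gg\lambda+\lambda^{-1}$ converts $\lambda^{-k}L^{3-k}$ into $L^{-\theta s}$, matching \eqref{CommBound}.

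The main obstacle I anticipate is the large-time regime: verifying the annular Fourier-support claim for the low-frequency product (which is where the Beltrami spectral condition is essential) and carrying out the bookkeeping that turns $\lambda^{-k}L^{3-k}$ into $L^{-\theta s}$ uniformly in $\lambda$, distinguishing the cases $\lambda\gtrsim1$ and $\lambda\ll1$ as in Lemma \ref{LemmaT1}. By contrast the short-time estimate is essentially the standard commutator-of-a-mollifier computation, the only noteworthy point being that the first moment of the Gaussian is what generates the additional $L^{\varepsilon/2}$.
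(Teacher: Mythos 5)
Your proposal is correct and matches the paper's proof in all essentials: the same large-time architecture (splitting $\phi_L$ at frequency $\lambda/2$, heat smoothing plus Bernstein for the high-frequency piece, the annulus support condition plus Lemma \ref{lemma chemin} for the low-frequency piece, and direct absorption of the $e^{-t\lambda^2}\phi_L B_\lambda$ term using that the exponential beats any power of $L$ once $t>L^{\varepsilon}$), and the same short-time cancellation, extracting $L^{-1}\sqrt{t}$ from the commutator via the mean value theorem, the Gaussian first moment, and H\"older with exponents $pq/(q-p)$ and $q$. The only cosmetic differences are that you organize the short-time estimate with Minkowski's integral inequality where the paper uses Jensen's inequality, and you apply it directly to the full commutator, whereas the paper first performs an extra frequency splitting at threshold $L^{\varepsilon}/L$ whose low-frequency case is then handled by the very same kernel computation.
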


\begin{rem}
As will be clear by the proof, the short time estimate can be strengthened to
$$
\| [e^{t\Delta} , \phi_L]  B_{\lambda}  \|_{L^p}\lesssim \sqrt{\nu t} R L^{-1 + \frac{3}{p} - \frac{3}{q}},
$$
however we will not need this improvement.
\end{rem}

\begin{proof}
Throughout the proof we will choose $s$ sufficiently large so that all the Sobolev norms of $\phi$ 
that we need are finite. 
Using $e^{t \Delta} B_\lambda = e^{- t \lambda^2} B_\lambda$, we can compute explicitly
$$
[e^{t\Delta} , \phi_L]  B_{\lambda} = (e^{t\Delta} - e^{- t \lambda^2}) (\phi_L B_\lambda).
$$

We first prove the statement for $t > L^{\varepsilon}$. 
We (smoothly) project $\phi_L$ on small and large frequencies, as in \eqref{ProjectionLF}, and we first focus on 
large frequencies for $\phi_L$ (and large times). Here we can bound the terms in the difference separately. Using Lemma \ref{lem:heat kernel classica} and the Bernstein inequality \eqref{BernsteinIneq2} 
we get
\begin{align}
\| e^{t\Delta} ( (P_{> \lambda/2} \phi_L) B_\lambda) \|_{L^{p}} 
& \lesssim t^{-\frac32\left( 1 - \frac1p \right)}
 \| P_{> \lambda/2} \mathcal{D}_L  \phi \|_{L^1} \| B_{\lambda}  \|_{L^{\infty}} \nonumber
\\ \nonumber
&
\lesssim t^{-\frac32\left( 1 - \frac1p \right)} \lambda^{-k}
 \| P_{> \lambda/2}|D|^k \mathcal{D}_L  \phi \|_{L^1} 
 \\ \nonumber
&
\lesssim t^{-\frac32\left( 1 - \frac1p \right)} \lambda^{-k} L^{-k}
 \|  P_{> \lambda/2} \mathcal{D}_L |D|^k \phi \|_{L^1} 
 \\ 
&
\lesssim t^{-\frac32\left( 1 - \frac1p \right)} \lambda^{-k} L^{-k +3}
 \|   |D|^k \phi \|_{L^1} 
 \lesssim t^{-\frac32\left( 1 - \frac1p \right)} \lambda^{-k} L^{-k +3}.
\end{align}
Moreover, we also immediately have
\begin{align}\label{VeryImm}
\| e^{- t \lambda^2} (  (P_{> \lambda/2} \phi_L) B_\lambda) \|_{L^{p}} \nonumber
& \lesssim 
e^{-  t \lambda^2} \|   \phi_L \|_{L^{p}} \| B_\lambda \|_{L^{\infty}} 
\\ 
& \lesssim 
e^{-  \frac12 L^{\varepsilon} \lambda^2 }
e^{-  \frac12 t \lambda^2 } L^{\frac3p} \|   \phi \|_{L^{p}} 
\lesssim e^{-  \frac12 L^{\varepsilon} \lambda^2 }
e^{-  \frac12 t \lambda^2 } L^{\frac3p} , \qquad t > L^{\varepsilon}.
\end{align}
We now look at small frequencies of $\phi_L$ and large times: we use the support condition 
$$
\text{supp\,}\mathcal{F} \left( (P_{\leq \lambda/2 }  \phi_L ) B_{\lambda} \right)
\subset \left\{ \frac{\lambda}{2} \leq |\xi| \leq \frac32 \lambda \right\},
$$
together with Lemma \ref{lemma chemin} to get 
\begin{align}
 \| e^{t\Delta} ( ( P_{\leq \lambda/2}  \phi_L)  B_{\lambda} ) \|_{L^{p}} 
& 
\lesssim    e^{- \frac{t}{4} \lambda^2}
\|  \mathcal{D}_L    \phi \|_{L^{p}}   \| B_{\lambda}  \|_{L^{\infty}}  
\\ \nonumber
&
\lesssim  L^{\frac{3}{p}}  e^{- \frac{L^{\varepsilon}}{8} \lambda^2} e^{- \frac{t}{8} \lambda^2} 
\|      \phi \|_{L^{p}}   \| B_{\lambda}  \|_{L^{\infty}}  
\\ \nonumber
&
\lesssim  L^{\frac{3}{p}}  e^{- \frac{L^{\varepsilon}}{8} \lambda^2} e^{- \frac{t}{8} \lambda^2}  , \qquad t > L^{\varepsilon}. 
\end{align}
For the term $\| e^{- t \lambda^2} (  (P_{\leq \lambda/2} \phi_L) B_\lambda) \|_{L^p}$ we can argue as in \eqref{VeryImm} 
(the analysis is in fact even easier, we do not even need to distinguish between high and low frequencies).
These inequalities conclude the proof for times $t > L^{\varepsilon}$
as long as we choose $L \gg \lambda + \lambda^{-1}$.

We now focus on small times, i.e. $t \leq L^{\varepsilon}$. Here we must use a finer choice of the projection threshold, namely
\begin{equation}
\nabla \phi_L = P_{\leq \frac{ L^{\varepsilon} }{L} }\, \phi_L  +
P_{> \frac{L^{\varepsilon} }{L} }\, \phi_L.
\end{equation}

For large frequencies of $\phi_L$ we have, using Lemma \ref{lem:heat kernel classica} and Bernstein inequality \eqref{BernsteinIneq2} 
\begin{align}\label{BernstStrongg}
\| e^{t\Delta} (( P_{> \frac{L^{\varepsilon} }{L}} \phi_L) & B_\lambda) \|_{L^{p}} 
+ 
\| e^{- t \lambda^2} ( (P_{> \frac{L^{\varepsilon} }{L}} \phi_L) B_\lambda) \|_{L^{p}} \nonumber
\\ \nonumber
&\lesssim 
 \| P_{> \frac{L^{\varepsilon} }{L} } \mathcal{D}_L  \phi \|_{L^p} \| B_{\lambda}  \|_{L^{\infty}} 
\\ \nonumber
& \lesssim  L^{k(1 - \varepsilon)}  \| P_{> \frac{L^{\varepsilon} }{L} } |D|^k \mathcal{D}_L  \phi \|_{L^p} 
\\ \nonumber
&\lesssim  L^{-k \varepsilon} 
\|   P_{> \frac{L^{\varepsilon} }{L} } \mathcal{D}_L |D|^k \phi \|_{L^p}
\\ 
&
\lesssim 
L^{-k \varepsilon + \frac3p} 
\|  |D|^k \phi \|_{L^p} \lesssim L^{-k \varepsilon + \frac3p} .
\end{align}
Taking $k > \frac{3}{\varepsilon p}$ and $L \gg \lambda + \lambda^{-1}$ this also gives the desired inequality.

It remains to consider small times and small frequencies of $\phi_L$, that is the hardest case. Indeed, this is the only case in which the smallness will be provided by a cancellation in the commutator. Let $t\leq L^\e$ and denote by $G(t,x):=\frac{1}{(4\pi\nu t)^{3/2}}e^{-\frac{|x|^2}{4t}}$ the heat kernel. We write the commutator as
\begin{align*}
[e^{t\Delta} , \phi_L]  B_{\lambda}(x)&=e^{t\Delta}(\phi_L B_\lambda)-\phi_L e^{t\Delta}B_\lambda\\
&=\int_{\R^3}G(t,x-y)\phi_L(y)B_\lambda(y)\de y-\phi_L(x)\int_{\R^3}G(t,x-y)B_\lambda(y)\de y\\
&=\int_{\R^3}G(t,x-y)B_\lambda(y)(\phi_L(y)-\phi_L(x))\de y.
\end{align*}
Then, since $G(t,y)\de y$ is a probability measure and the function $f(s)=s^p$ is convex for $p\geq1$, we 
obtain from  
Jensen's inequality 
$$
|[e^{t\Delta} , \phi_L]  B_{\lambda}(x)|^p  \leq \int_{\R^3}G(t,x-y)|B_\lambda(y)|^p|\phi_L(y)-\phi_L(x)|^p\de y.
$$
We now compute the $L^p$ norm: we integrate the above expression over $\R^3$ obtaining that
\begin{align*}
\|[e^{t\Delta} , \phi_L]  B_{\lambda}\|_{L^p}^p&\leq \int_{\R^3}\int_{\R^3}G(t,x-y)|B_\lambda(y)|^p|\phi_L(y)-\phi_L(x)|^p\de y\de x\\
&=\frac{1}{(4\pi\nu t)^{3/2}}\int_{\R^3}\int_{\R^3}e^{-\frac{|x-y|^2}{4\nu t}}|B_\lambda(y)|^p|\phi_L(y)-\phi_L(x)|^p\de y\de x\\
&=\frac{1}{\pi^{3/2}}\int_{\R^3}\int_{\R^3}e^{-|z|^2}|B_\lambda(y)|^p|\phi_L(y+\sqrt{4\nu t}z)-\phi_L(y)|^p\de z\de y,
\end{align*}
where in the last line we made the change of variables $x=y+\sqrt{4\nu t}z$. Then, we use the definition of $\phi_L$ and that
\begin{equation*}
|\phi_L(y+\sqrt{4\nu t}z)-\phi_L(y)|=L^{-1} \sqrt{4\nu t}  \left|\int_0^1
z \cdot \nabla\phi\left(\frac{y+s\sqrt{4\nu t}z}{L}\right)\de s\right|.
\end{equation*}
Using this and H\"older inequality on $[0,1]$ we arrive to 
\begin{align*}
\|[e^{t\Delta} , \phi_L]  B_{\lambda}\|_{L^p}^p&\lesssim L^{-p} (\nu t)^{\frac{p}{2}} \int_{\R^3}\int_{\R^3}\int_0^1 |z| e^{-|z|^2}|B_\lambda(y)|^p\left|\nabla\phi\left(\frac{y+s\sqrt{4\nu t}z}{L}\right)\right|^p \de s \de y\de z\\
&=L^{-p} (\nu t)^{\frac{p}{2}} \int_{\R^3}\int_0^1 |z| e^{-|z|^2}\underbrace{\left(\int_{\R^3}|B_\lambda(y)|^p\left|\nabla\phi\left(\frac{y+s\sqrt{4\nu t}z}{L}\right)\right|^p\de y\right)}_{(*)}\de s\de z.
\end{align*}
To bound $(*)$ we use H\"older's inequality with exponents $\frac{q}{p}$ and $\frac{q}{q-p}$: since the Lebesgue's norm is translation invariant, we obtain that
\begin{equation}
\int_{\R^3}|B_\lambda(y)|^p\left|\nabla\phi\left(\frac{y+s\sqrt{4\nu t}z}{L}\right)\right|^p\de y\leq L^{3(1-\frac{p}{q})}\|B_\lambda\|_{L^q}^p \|\nabla\phi\|_{L^\frac{pq}{q-p}}^p,
\end{equation}
which leads to the estimate
\begin{align*}
\|[e^{t\Delta} , \phi_L]  B_{\lambda}\|_{L^p}^p&\lesssim (\nu t)^{\frac{p}{2}} L^{3(1-\frac{p}{q})-p}\|B_\lambda\|_{L^q}^p\|\nabla\phi\|_{L^\frac{pq}{q-p}}^p\int_0^1\int_{\R^3} |z| e^{-|z|^2}\de x\de s,
\end{align*}
and then
\begin{equation}
\|[e^{t\Delta} , \phi_L]  B_{\lambda}\|_{L^p}\lesssim (\nu t)^{\frac{1}{2}} RL^{-1+\frac{3}{p}-\frac{3}{q}}.
\end{equation}
This concludes the proof.
\end{proof}

\begin{rem}\label{PartialDerivativesRemark}
The estimates from Lemma \ref{LemmaT1} and Lemma \ref{Lemma:dec} also hold for the partial derivatives of the left hand side, namely for the $L^p$ norms of $\partial_j [e^{t\Delta} , \phi_L]  B_{\lambda}$ and $\partial_j e^{t\Delta} ( \nabla\phi_L\wedge B_{\lambda} )$. In fact, when the partial derivative falls on $\phi_L$ we get an even better estimate (we gain $L^{-1}$). On the other hand, when the partial derivative falls on $B_{\lambda}$ we use that: (1) $\partial_j B_{\lambda}$ is a Beltrami field of frequency $\lambda$ and (2): 
\begin{equation}\label{NablaEstimateLambda}
\| \partial_j B_{\lambda} \|_{L^p} \lesssim \lambda 
\| B_{\lambda} \|_{L^p},
\end{equation}
for all $p \in [1, \infty]$, which is a consequence of Bernstein inequality \eqref{BernsteinIneq3} that we are allow to apply since $\mathrm{supp}\, \widehat{\partial_j B_\lambda} \subseteq \{ |\xi | = \lambda\}$. Combining (1) and (2) it is clear that we can use the same argument as in the proof of Lemmas \ref{LemmaT1}-\ref{Lemma:dec} to get the same estimate with an additional loss of $C \lambda$. In the short time estimates, this can be absorbed by an extra factor $L^{\varepsilon}$ that is harmless for our purposes, while in the long time estimate this simply means to choose $\theta$ slightly smaller). Similarly, the factor $R$ appearing in \eqref{CommBound}-\eqref{CommBound2} can be absorbed by an extra factor $L^{\varepsilon}$, choosing $L \gg R$. 
\end{rem}

The next lemma is the main result of this section and the main ingredient in the proof of {\bf Theorem 1}, that is the well-posedness for initial data that are localization of Beltrami fields in the sense of \eqref{LocBeltramiAgain}. We define for brevity
\begin{equation}\label{Def:OmegaW0}
\Omega_{u_0} := 
(e^{t\Delta} u_0 \cdot \nabla ) e^{t\Delta} u_0.
\end{equation}
and recall that 
$\mathbb{P}$ is the projection on the linear subspace of divergence free vector fields.

\begin{lem}\label{ForcingTermLemma}
Let $q \in (3, \infty)$ and $B_{\lambda} \in L^q \cap L^{\infty}(\R^3)$ be a Beltrami field with frequency $\lambda$ and with $\| B_{\lambda}\|_{L^{\infty}} =1$. Let $\phi \in W^{s,1} \cap W^{s, \infty}(\R^3)$. Then, for all $0 < \varepsilon \ll 1 $ one can take sufficiently large values of  $s$ and of the dilation factor~$L$ such that
\begin{equation}\label{CommBound2}
\| \mathbb{P} \, \Omega_{u_0}(t,\cdot) \|_{L^{r}} \lesssim 
\left\{ \begin{array}{ll}
t^{-3\left(1-\frac{1}{2r}\right)} L^{- \theta s} & \mbox{for $t > L^{\varepsilon}$}, \\
L^{-1 + \varepsilon+\frac3r - \frac3q} & \mbox{for $t \leq L^{\varepsilon}$} ,
\end{array}
\right.
\end{equation} 
for any $1\leq r<\infty$ satisfying 
\begin{equation}
\label{relazione r q}
\frac1r<\frac13+\frac1q-\frac\e3.
\end{equation}
\end{lem}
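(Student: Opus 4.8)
The plan is to turn the bilinear quantity $\Omega_{u_0}$ into a gradient --- which is annihilated by $\mathbb{P}$ --- plus error terms that are already controlled by Lemma \ref{LemmaT1} and Lemma \ref{Lemma:dec}. Since $\mathbb{P}\,\Omega_{u_0}$ is bilinear in $u_0$ the prefactor $M^2$ plays no role, and we may take $u_0=\curl(\phi_L B_\lambda)$. Using the identity $\curl(\phi_L B_\lambda)=\nabla\phi_L\wedge B_\lambda+\phi_L\,\curl B_\lambda=\nabla\phi_L\wedge B_\lambda+\lambda\,\phi_L B_\lambda$ together with $e^{t\Delta}B_\lambda=e^{-t\lambda^2}B_\lambda$, I would decompose
\begin{equation*}
e^{t\Delta}u_0 = V_t + r_t, \qquad V_t := \lambda\, e^{-t\lambda^2}\phi_L B_\lambda, \qquad r_t := e^{t\Delta}(\nabla\phi_L\wedge B_\lambda)+\lambda\,[e^{t\Delta},\phi_L]B_\lambda,
\end{equation*}
so that $V_t$ is the ``Beltrami part'' and $r_t$ is a genuine remainder whose two pieces are exactly the objects estimated in Lemma \ref{LemmaT1} and Lemma \ref{Lemma:dec} (and whose gradients are covered by Remark \ref{PartialDerivativesRemark}).

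Expanding the bilinear form gives $\Omega_{u_0}=(V_t\cdot\nabla)V_t+(V_t\cdot\nabla)r_t+(r_t\cdot\nabla)V_t+(r_t\cdot\nabla)r_t$. The heart of the argument is the first term. Using the Euler stationarity \eqref{BeltrProp}, i.e. $(B_\lambda\cdot\nabla)B_\lambda=\nabla(|B_\lambda|^2/2)$, I would write
\begin{equation*}
(V_t\cdot\nabla)V_t=\lambda^2 e^{-2t\lambda^2}\Big[\phi_L(B_\lambda\cdot\nabla\phi_L)B_\lambda+\phi_L^2\,\nabla\tfrac{|B_\lambda|^2}{2}\Big]=\nabla A_t+\lambda^2 e^{-2t\lambda^2}\Big[\phi_L(B_\lambda\cdot\nabla\phi_L)B_\lambda-\tfrac{|B_\lambda|^2}{2}\phi_L\nabla\phi_L\Big],
\end{equation*}
with $A_t:=\tfrac12\lambda^2 e^{-2t\lambda^2}\phi_L^2|B_\lambda|^2$. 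Since $\mathbb{P}\nabla A_t=0$, the leading term survives only through the bracket, in which every summand carries a factor $\nabla\phi_L=L^{-1}\mathcal{D}_L\nabla\phi$, hence an explicit gain $L^{-1}$.

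The remaining work is a H\"older bookkeeping. Because $\mathbb{P}$ is a Calder\'on--Zygmund operator it is bounded on $L^r$ for $1<r<\infty$, so it suffices to bound each term in $L^r$. For the leading bracket I would estimate by H\"older, placing one factor $B_\lambda$ in $L^q$ and the other in $L^\infty$ (recall $\|B_\lambda\|_{L^\infty}=1$) and the localized weight $\phi_L\,\mathcal{D}_L\nabla\phi$ in $L^c$ with $\tfrac1c=\tfrac1r-\tfrac1q$; together with the $L^{-1}$ this yields $\lesssim L^{-1+3/r-3/q}$ for $t\le L^\varepsilon$, while the factor $e^{-2t\lambda^2}$ makes the contribution exponentially small (hence dominated by the $L^{-\theta s}$ bound) for $t> L^\varepsilon$. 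For the cross and quadratic terms I would split $\|(f\cdot\nabla)g\|_{L^r}\le\|f\|_{L^a}\|\nabla g\|_{L^b}$ with $\tfrac1r=\tfrac1a+\tfrac1b$ and insert the elementary bounds for $V_t,\nabla V_t$ (using $\curl B_\lambda=\lambda B_\lambda$ and \eqref{NablaEstimateLambda}) together with the bounds for $r_t,\nabla r_t$ from Lemma \ref{LemmaT1}, Lemma \ref{Lemma:dec} and Remark \ref{PartialDerivativesRemark}. In the short-time regime each product is dominated by $L^{-1+\varepsilon+3/r-3/q}$, the extra $\varepsilon$ absorbing the constants $R$ and $\lambda$ as in Remark \ref{PartialDerivativesRemark}; in the long-time regime the product of two remainder bounds $t^{-\frac32(1-1/a)}\,t^{-\frac32(1-1/b)}=t^{-3(1-1/(2r))}$ produces exactly the claimed decay together with $L^{-\theta s}$, the $V_t$-containing terms being additionally exponentially small in $t$.

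The main obstacle is twofold. Conceptually, it is the gradient extraction in $(V_t\cdot\nabla)V_t$: without the Euler identity \eqref{BeltrProp} this term would carry only the decay $e^{-2t\lambda^2}$ and no power of $L^{-1}$, so it would fail to be small and the whole scheme would collapse; the Leray projector is indispensable precisely because $\mathbb{P}\big((B_\lambda\cdot\nabla)B_\lambda\big)=0$. Technically, the delicate point is to check that after all the H\"older splittings the exponents $a,b,c$ can be chosen in their admissible ranges --- in particular one has no better than $L^q$ integrability of $B_\lambda$ at one's disposal --- which is exactly what the hypothesis \eqref{relazione r q} guarantees and which fixes the critical threshold $\tfrac1r<\tfrac13+\tfrac1q-\tfrac\varepsilon3$.
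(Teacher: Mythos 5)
Your proposal is correct and follows essentially the same route as the paper's own proof: the identical decomposition $e^{t\Delta}u_0=\lambda e^{-t\lambda^2}\phi_L B_\lambda+\lambda[e^{t\Delta},\phi_L]B_\lambda+e^{t\Delta}(\nabla\phi_L\wedge B_\lambda)$, the extraction of a pure gradient from the leading quadratic term via $(B_\lambda\cdot\nabla)B_\lambda=\tfrac12\nabla|B_\lambda|^2$ (killed by $\mathbb{P}$), boundedness of $\mathbb{P}$ on $L^r$, and H\"older bookkeeping with Lemma \ref{LemmaT1}, Lemma \ref{Lemma:dec} and Remark \ref{PartialDerivativesRemark}, including the long-time product $t^{-\frac32(1-1/a)}t^{-\frac32(1-1/b)}=t^{-3(1-\frac{1}{2r})}$ exactly as in the paper (with $a=b=2r$ there). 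The only blemishes are trivial: a factor-of-two slip (since $\nabla(\phi_L^2)=2\phi_L\nabla\phi_L$, the second bracket term should be $-|B_\lambda|^2\phi_L\nabla\phi_L$), and the role of \eqref{relazione r q} is really to make the short-time bound a negative power of $L$ rather than to legitimize the H\"older exponents, neither of which affects the argument.
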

\begin{proof}
We set $R := \| B_{\lambda}\|_{L^{q}}$.
By using standard vector identities and the definition of $B_{\lambda}$, it is immediate to check that
\begin{equation}\label{def:u_0^1Preq}
u_0 = \lambda \phi_L B_{\lambda} + \nabla\phi_L\wedge B_{\lambda}. 
\end{equation}
Thus
we can write the free evolution of the initial datum as
\begin{align}\label{RecThis1}
e^{t\Delta}u_0 
& =  \lambda e^{ t\Delta} ( \phi_L B_{\lambda} ) + e^{t\Delta} ( \nabla\phi_L\wedge B_{\lambda} )
\nonumber\\ 
& = \lambda e^{- t \lambda^2} \phi_L    B_{\lambda}  +  \lambda [e^{ t\Delta} , \phi_L]  B_{\lambda}   
+ e^{ t\Delta} ( \nabla\phi_L\wedge B_{\lambda} ),
\end{align}
where we used $e^{ t\Delta} B_{\lambda} = e^{- t\lambda^2} B_{\lambda}$.
Then, we define
$$
R_1 = \lambda^2 e^{- 2 t \lambda^2}  \phi_L \left( (  B_{\lambda}   \cdot     \nabla )  \phi_L \right)     B_{\lambda} ,
\qquad
R_2 = \lambda [e^{t\Delta} , \phi_L]  B_{\lambda}   ,
\qquad
R_3 = e^{t\Delta} ( \nabla\phi_L\wedge B_{\lambda} ),
$$
and we
expand  
\begin{align}\label{MainExpansion}
 (e^{t\Delta} u_0 \cdot \nabla ) e^{t\Delta} u_0
 & = 
\lambda^2 e^{- 2t \lambda^2} \phi_L^2  (  B_{\lambda}   \cdot \nabla )   B_{\lambda} + R_1 + \lambda e^{-t \lambda^2} \sum_{j = 2,3}  ( \phi_L  ( B_{\lambda}  \cdot \nabla ) R_j
\nonumber\\
& + \lambda e^{-t \lambda^2}  \sum_{j = 2,3}  (R_j \cdot \nabla )  \phi_L    B_{\lambda}   +  \sum_{\substack{i,j =2,3}} (R_i \cdot \nabla) R_j.
\end{align}
The principle is that we identify as rest every term that contains a commutator or such that at least one 
partial derivative falls on $\phi_L$.
Then since 
$$
 (  B_{\lambda}   \cdot \nabla )   B_{\lambda} = \frac{1}{2} \nabla  |B_\lambda|^2,
$$ 
we notice that the dominant term on the r.h.s. of \eqref{MainExpansion} can be rewritten as
$$
\lambda^2 e^{-2 t \lambda^2} \phi_L^2  (  B_{\lambda}   \cdot \nabla )   B_{\lambda}
= \frac{\lambda^2}{2} e^{-2t \lambda^2} \nabla ( \phi^2 |B_\lambda|^2)   -  
 \frac{\lambda^2}{2}   e^{-2t \lambda^2} |B_{\lambda}|^2   \nabla \phi_L^2  = \nabla A  +  R_4 ,
 $$ 
that is the sum of a gradient and a 
new remainder 
 $$
 R_4 := - \frac{\lambda^2}{2}   e^{-2t \lambda^2} |B_{\lambda}|^2   \nabla (\phi_L^2).
 $$
Thus, since $\mathbb{P} \, \nabla A =0$, we arrive to 
\begin{equation}\label{errore qua}
\mathbb{P}\, \Omega_{u_0} = \mathbb{P}\left[
R_1 + R_4 +  \lambda e^{-t \lambda^2} \sum_{j = 2,3}  \phi_L  ( B_{\lambda}  \cdot \nabla ) R_j
 + \lambda e^{-t \lambda^2}  \sum_{j = 2,3}  
 (R_j \cdot \nabla )  \phi_L    B_{\lambda}   +  \sum_{\substack{i,j =2,3}} (R_i \cdot \nabla) R_j\right],
\end{equation}
and the statement will follow implementing the previous lemmas. 
We are interested in estimates on $L^r$ norms with $r\in (1, \infty)$ and since the operator $\mathbb{P}:L^r\to L^r$ is bounded for such values of $r$, we forget about the operator $\mathbb{P}$ in what follows.
The terms 
$(R_i \cdot \nabla) R_j$ behaves better, thus the worst contribution is given by the first four summand.
The analysis of each of them is essentially the same, and so, to fix the ideas, we will focus on
$$ 
\lambda e^{-t \lambda^2} ( R_2 \cdot \nabla ) ( \phi_L    B_{\lambda} ). 
$$
We rewrite it explicitly as
$$ 
\lambda^2 e^{-t \lambda^2}  \left( [e^{t\Delta} , \phi_L]  B_{\lambda}   \cdot \nabla \right)(  \phi_L    B_{\lambda}),  
$$
and we see that the worst contribution comes when the $\nabla$ falls on $B_{\lambda}$ (otherwise we have an extra $L^{-1}$ gain). Thus, we reduce to estimate
$$ 
\lambda^2 e^{-t \lambda^2} \phi_L  ( [e^{t\Delta} , \phi_L]  B_{\lambda}   \cdot \nabla )  B_{\lambda}.  
$$
We first note (see \eqref{NablaEstimateLambda})
\begin{equation}\label{BoundPartialBeltrami}
\| \nabla B_{\lambda} \|_{L^{\infty}} \lesssim 
\lambda \| B_{\lambda} \|_{L^{\infty}} \leq \lambda.
\end{equation}
Now, using this observation and Lemma \ref{Lemma:dec} with $p = r$, we obtain the following estimates. For short times $0\leq t \leq L^{\varepsilon}$ it holds
\begin{align*}
\| \lambda^2 e^{-t \lambda^2}  \phi_L  ( [e^{t\Delta} , \phi_L]  B_{\lambda}   \cdot \nabla )      B_{\lambda} \|_{L^{r}}
&\lesssim \lambda^2 \| [e^{t\Delta} , \phi_L]  B_{\lambda} \|_{L^{r}} \| \phi_L \|_{L^{\infty}} 
\| \nabla B_{\lambda} \|_{L^{\infty}}\\
&\lesssim \lambda^{3} R L^{-1 + \frac{\varepsilon}{2}+\frac3r - \frac3q    } ,
 \end{align*}
and taking $L \gg \lambda, R$, we use a factor $L^{-\frac\e2}$ to absorb $\lambda^3 R$ and we get the claimed estimate. For large times $t > L^{\varepsilon}$ the estimate is immediate taking advantage of the exponential factor $e^{-t \lambda^2}$. We can indeed estimate everything in $L^{r}$ and take $L \gg \lambda + \lambda^{-1} + R$ without even using the commutator structure. Alternatively we can simply use Lemma \ref{Lemma:dec} again. Now it should be clear that all the other contributions can be estimated in the same way, possibly using Lemma \ref{LemmaT1} instead of Lemma \ref{Lemma:dec} (in which case the proof is even easier, like for $R_1$ and $R_4$).

We will leave the analysis to the reader, after doing two final remarks. The first remark concerns the contribution $ \phi_L  ( B_{\lambda}  \cdot \nabla ) R_j$. This behaves exactly as the contributions $(R_j \cdot \nabla )  \phi_L    B_{\lambda}$ except for the fact that one has to take into account the partial derivatives of the $R_j$, $j=2,3$. This is harmless since it only introduces (in the worst case) an extra factor $\lambda$ that can be absorbed by an extra $L^{\varepsilon}$; see Remark \ref{PartialDerivativesRemark}. 
   
The second remark concerns the contributions $(R_i \cdot \nabla) R_j$. They behave much better than the others in the small time regime ($t \leq L^{\varepsilon}$), but they have a worst decay for long times (polynomial instead of exponential). Indeed, until now we have not really used the Lemmas \ref{LemmaT1}-\ref{Lemma:dec} in the large time regime ($t > L^{\varepsilon}$) and we actually need them when looking at the long time behavior of $ (R_i \cdot \nabla) R_j$, $i,j =2,3$. By using Lemmas \ref{LemmaT1} and/or \ref{Lemma:dec} with $p= 2r$ for both $R_i$ and $R_j$ (the action of $\nabla$ is handled as explained in Remark \ref{PartialDerivativesRemark}) that gives a decay $t^{- 3/2+\frac{3}{4r}} t^{- 3/2+\frac{3}{4r}} = t^{-3+\frac{3}{2r}}$, as claimed.
\end{proof}

\begin{rem}
We now make some comments on the relationship between $r$ and $q$ in \eqref{relazione r q}.
\begin{itemize}
\item Since $q>3$, in order to get smallness, the equation \eqref{relazione r q} imposes the restriction $r>3/2$.
\item If $r\geq q$, the equation \eqref{relazione r q} is always satisfied.
\item For $0<\delta\ll 1$ small enough, one can consider $0<\e\leq \delta$ such that, for any $3<q < \infty$
$$
-1  + \frac{3}{3-\delta} - \frac{3}{q} + \varepsilon \leq - c \delta.
$$
This allow us to apply Lemma \ref{ForcingTermLemma} to get the following estimate
\begin{equation}\label{CommBound3}
\| \mathbb{P} \, \Omega_{u_0}(t,\cdot) \|_{L^{3 - \delta}} \lesssim 
\left\{ \begin{array}{ll}
t^{-5/2 + C \delta} L^{- \theta s} & \mbox{for $t > L^{\varepsilon}$}, \\
L^{- c \delta} & \mbox{for $t \leq L^{\varepsilon}$} ,
\end{array}
\right. 
\end{equation}\
where $\theta \in (0,1)$ and and $C >1$ are absolute constants and $c >0$ only depends on $q$.
\end{itemize}
\end{rem}

Finally, we conclude this subsection proving that our data are large in any Navier-Stokes critical space.

\begin{lem}\label{SizeLemmaM}
Let $q \in (3, \infty)$ and $B_{\lambda} \in L^q \cap L^{\infty}(\R^3)$ be a Beltrami field with frequency $\lambda$, $\| B_{\lambda}\|_{L^{\infty}} =1$ and $\| B_{\lambda}\|_{L^{q}} =R$, and let $\phi \in W^{s,1} \cap W^{s, \infty}(\R^3)$ be a positive smooth function. Let $M>0$ and define the divergence-free vector field 
$$
u_0=M\curl(\phi_L B_\lambda).
$$ 
Then, for all sufficiently large values of $s$ and of the dilation factor $L$ we we have that
\begin{equation}\label{LargeDataM}
M \lesssim \| u_0 \|_{\dot{B}^{-1}_{\infty, \infty}} 
\leq 
\| u_0 \|_{\dot{B}^{-1}_{\infty, 2}}
\lesssim M.
\end{equation}
\end{lem}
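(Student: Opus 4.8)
The plan is to work with the heat-semigroup characterization of the Besov norms recalled in the excerpt, namely $\|f\|_{\dot B^{-1}_{\infty,\nu}}\sim \big\|\,t^{1/2}\|e^{t\Delta}f\|_{L^\infty}\,\big\|_{L^{\nu}(\R^+;\de t/t)}$ for $\nu\in\{2,\infty\}$. The central inequality of the chain, $\|u_0\|_{\dot B^{-1}_{\infty,\infty}}\leq\|u_0\|_{\dot B^{-1}_{\infty,2}}$, is then just the trivial embedding $\ell^2\hookrightarrow\ell^\infty$ (a supremum is dominated by an $\ell^2$ sum) and requires no work. Everything else reduces to understanding the profile $t\mapsto\|e^{t\Delta}u_0\|_{L^\infty}$, and for this I would reuse verbatim the splitting from the proof of Lemma \ref{ForcingTermLemma}. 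Writing $u_0=M(\lambda\phi_L B_\lambda+\nabla\phi_L\wedge B_\lambda)$ and using $e^{t\Delta}B_\lambda=e^{-t\lambda^2}B_\lambda$ gives
\[
e^{t\Delta}u_0=\underbrace{M\lambda e^{-t\lambda^2}\phi_L B_\lambda}_{\text{main term}}+\underbrace{M\lambda[e^{t\Delta},\phi_L]B_\lambda}_{=:E_1}+\underbrace{Me^{t\Delta}(\nabla\phi_L\wedge B_\lambda)}_{=:E_2}.
\]
The idea is that the main term already carries the full size $\simeq M$, while $E_1,E_2$ are negligible for $L$ large.

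For the remainders I would apply Lemma \ref{Lemma:dec} to $E_1$ and Lemma \ref{LemmaT1} to $E_2$, in both cases with exponents $p=q=\infty$: this is legitimate because $B_\lambda\in L^\infty$ with $\|B_\lambda\|_{L^\infty}=1$, so the role of $R$ is played by $1$. These lemmas yield $\|E_1\|_{L^\infty}+\|E_2\|_{L^\infty}\lesssim M(1+\lambda)L^{-1+\varepsilon/2}$ for $t\leq L^\varepsilon$ and $\lesssim M(1+\lambda)\,t^{-3/2}L^{-\theta s}$ for $t>L^\varepsilon$. Multiplying by $t^{1/2}$ and integrating $\de t/t$, the short-time range contributes a factor $(\int_0^{L^\varepsilon}\de t)^{1/2}=L^{\varepsilon/2}$ and the long-time range a factor $(\int_{L^\varepsilon}^\infty t^{-3}\,\de t)^{1/2}\simeq L^{-\varepsilon}$, so both remainders are $O(M\lambda L^{-1+\varepsilon})+O(M\lambda L^{-\theta s-\varepsilon})=o(M)$ as $L\to\infty$. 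For the main term, the elementary identities $\sup_{t>0}t^{1/2}\lambda e^{-t\lambda^2}\simeq 1$ and $\int_0^\infty(t^{1/2}\lambda e^{-t\lambda^2})^2\,\de t/t\simeq 1$ show that its contribution to either norm is $\simeq M\|\phi_L B_\lambda\|_{L^\infty}\leq M\|\phi\|_{L^\infty}$. Combining, $\|u_0\|_{\dot B^{-1}_{\infty,2}}\lesssim M$, which is the upper bound.

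For the lower bound $M\lesssim\|u_0\|_{\dot B^{-1}_{\infty,\infty}}$ I would test the supremum at the single time $t_\ast=\lambda^{-2}$, where $e^{-t_\ast\lambda^2}=e^{-1}$, and use the reverse triangle inequality
\[
t_\ast^{1/2}\|e^{t_\ast\Delta}u_0\|_{L^\infty}\geq e^{-1}M\,\|\phi_L B_\lambda\|_{L^\infty}-t_\ast^{1/2}\big(\|E_1(t_\ast)\|_{L^\infty}+\|E_2(t_\ast)\|_{L^\infty}\big).
\]
Since $t_\ast\leq L^\varepsilon$ for $L$ large, the subtracted terms are $O(M L^{-1+\varepsilon/2})=o(M)$ by the estimates above, so everything hinges on a uniform-in-$L$ lower bound $\|\phi_L B_\lambda\|_{L^\infty}\gtrsim 1$.

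This last point is the only genuine difficulty, and it is where the hypotheses $\phi>0$ and $q<\infty$ enter. I would argue that, because $B_\lambda$ is smooth with $\|\nabla B_\lambda\|_{L^\infty}\lesssim\lambda\|B_\lambda\|_{L^\infty}$ (Bernstein, as in \eqref{NablaEstimateLambda}), it is uniformly continuous, and being in $L^q(\R^3)$ with $q<\infty$ it must satisfy $|B_\lambda(x)|\to 0$ as $|x|\to\infty$; hence its supremum $\|B_\lambda\|_{L^\infty}=1$ is attained at some finite point $x_0$. By continuity and positivity of $\phi$ one then has $\phi_L(x_0)=\phi(x_0/L)\to\phi(0)>0$, so $\|\phi_L B_\lambda\|_{L^\infty}\geq\phi(x_0/L)|B_\lambda(x_0)|\geq\tfrac12\phi(0)$ for all $L$ large. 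Feeding this back gives $t_\ast^{1/2}\|e^{t_\ast\Delta}u_0\|_{L^\infty}\gtrsim M$, hence $\|u_0\|_{\dot B^{-1}_{\infty,\infty}}\gtrsim M$, completing the chain. Throughout, the implicit constants are allowed to depend on $\lambda$, $q$ and $\phi$, but crucially not on $M$ nor, for $L$ large, on $L$.
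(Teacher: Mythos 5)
Your proposal is correct and follows essentially the same route as the paper's proof: the same decomposition $e^{t\Delta}u_0 = M\lambda e^{-t\lambda^2}\phi_L B_\lambda + M\lambda[e^{t\Delta},\phi_L]B_\lambda + Me^{t\Delta}(\nabla\phi_L\wedge B_\lambda)$, the same use of Lemmas \ref{LemmaT1} and \ref{Lemma:dec} with $p=q=\infty$ to show the remainders are $o(M)$, and the same caloric characterizations of the two Besov norms. The only (harmless) divergence is in the lower bound for $\|\phi_L B_\lambda\|_{L^\infty}$: you invoke decay at infinity (via $q<\infty$ and Bernstein) to get an exact maximizer, while the paper simply evaluates at a near-maximizing point $x^*$ and takes $L$ large so that $\phi_L(x^*)>1/2$.
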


\begin{proof}
By rescaling it suffices to consider $M=1$. We will prove the statement when the frequency $\lambda$ of the Beltrami field is strictly positive, the case $\lambda <0$ requires obvious modifications.

We recall that (see \eqref{RecThis1}): 
$$
e^{t\Delta}u_0 
 = \lambda e^{-t \lambda^2} \phi_L    B_{\lambda}  +  \lambda [e^{t\Delta} , \phi_L]  B_{\lambda}   
+ e^{t\Delta} ( \nabla\phi_L\wedge B_{\lambda} ).
$$
We will see that the first term on the right hand side dominates the others. We first compute the $\dot{B}^{-1}_{\infty,\infty}$ norm: we recall the caloric characterization 
$$
\| f \|_{\dot{B}^{-1}_{\infty,\infty}}:= \sup_{t >0} \sqrt{t} \| e^{t\Delta} f\|_{L^\infty}.
$$
We then compute
\begin{equation}\label{WRTTinKT}
\sup_{t >0} \sqrt{t} e^{-t \lambda^2} \| \phi_L    B_{\lambda}  \|_{L^{\infty}} \gtrsim
\sup_{t >0} \sqrt{t} e^{-t \lambda^2} \phi_L (x^*)   |B_{\lambda} (x^*)| \gtrsim
\sup_{t >0} \sqrt{t}    e^{-t \lambda^2}  \gtrsim \frac{1}{\lambda},
\end{equation}
where we have chosen $x^*$ such that $\| B_{\lambda} \|_{L^{\infty}} \geq |B_{\lambda}(x^*)| $
and $L$ sufficiently large that $| \phi_L(x^*)| > 1/2$.

Then using Lemmas \ref{LemmaT1} and \ref{Lemma:dec} with $p=q=\infty$ we see that the remainders 
are negligible w.r.t. \eqref{WRTTinKT}. This proves the first inequality in \eqref{LargeDataM}. 

For the last inequality in \eqref{LargeDataM} it is enough to use the caloric characterization
$$
\| f \|_{\dot{B}^{-1}_{\infty, 2}} \simeq 
 \left( \int_{0}^{\infty}  \| e^{t\Delta} f\|^2_{L^\infty} \, dt \right)^{1/2}.
 $$
Then, from the decomposition \eqref{RecThis1} together with Lemmas \ref{LemmaT1} and \ref{Lemma:dec} with $p=q=\infty$, the statement easily follows.
\end{proof}

\subsection{Proof of Theorem 1}\label{Sec: Proof of Main Theorem 1}
We can now prove that initial data of the form
$$
u_0=M\curl(\phi_L B_\lambda),
$$
give rise to strong finite energy solutions. To do so, we use Theorem \ref{teo:Chemin-Gallagher} combined with Lemma \ref{ForcingTermLemma}. We provide again the statement of {\bf Theorem 1} for the reader's convenience.
\begin{mainthm*}
Let $M > 0$ and $q \in (3, \infty)$. Let $B_{\lambda} \in L^{q}\cap L^{\infty}(\R^3)$ be a Beltrami field of 
frequency~$\lambda \neq 0$ and such that $\|B_\lambda\|_{L^\infty}=1$. Let  
 $\phi \in W^{s,1} \cap W^{s, \infty}(\R^3)$ be a positive function. 
We consider the divergence free vector field
\begin{equation}\label{DefW0Bis}
u_0 := M  \curl(\phi_{L} B_{N}) , \quad \phi_{L}(\cdot) := \phi\left( \frac{\cdot}{L} \right), \qquad M, L > 0.
\end{equation}
Then 
for all sufficiently large values of the regularity $s$ and of 
the dilation parameter $L$ the following holds:
\begin{itemize}
\item every scaling invariant norm of $u_0$ is large, namely $\|u_0\|_{\dot{B}^{-1}_{\infty,\infty}}\sim M $;
\item there exists a unique global strong solution $u$ of the Navier-Stokes equation with initial datum $u_0$.
\end{itemize} 
The solution is smooth and has finite energy, in fact satisfies the 
energy identity 
\begin{equation}\label{EnergyId}
\int_{\R^3}|u(t,x)|^2 \, \de x + \int_{0}^{t} \int_{\R^3} |\nabla u(s,x)|^2 \, \de x \de s = 
\int_{\R^3}|u_0(x)|^2 \, \de x .
\end{equation}
\end{mainthm*}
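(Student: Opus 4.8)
The plan is to obtain the two bullet points and the energy identity by combining the estimates already established: the size statement is Lemma~\ref{SizeLemmaM}, the global existence comes from the Chemin--Gallagher criterion (Theorem~\ref{teo:Chemin-Gallagher}) once the nonlinear smallness \eqref{SmallnessConditionCG} is verified through Lemma~\ref{ForcingTermLemma}, and smoothness together with the energy identity follow from propagation of Sobolev regularity for a non blowing-up finite-energy solution.

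First I would check that $u_0 \in \dot{H}^{1/2}(\R^3)$, and in fact $u_0 \in H^{r}$ for every $r \lesssim s$. Indeed $\phi_L B_\lambda \in L^2$ since $\phi_L \in L^1 \cap L^\infty \subset L^2$ and $B_\lambda \in L^\infty$, and its derivatives stay in $L^2$ because $\nabla^k \phi_L = L^{-k}(\nabla^k\phi)(\cdot/L) \in L^2$ for $k \leq s$ while $\|\nabla^m B_\lambda\|_{L^\infty} \lesssim \lambda^m$ (each derivative of a frequency-$\lambda$ field costs a factor $\lambda$); applying $\curl$ costs one derivative, so $u_0 \in H^{s-1} \hookrightarrow H^{1/2} \hookrightarrow \dot{H}^{1/2}$. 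The first bullet is then immediate from \eqref{LargeDataM}: since $\dot{B}^{-1}_{\infty,\infty}$ is the largest critical space, the lower bound $\|u_0\|_{\dot{B}^{-1}_{\infty,\infty}} \gtrsim M$ forces every critical norm to be $\gtrsim M$, and the two-sided bound gives $\|u_0\|_{\dot{B}^{-1}_{\infty,\infty}} \sim M$.

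The core step is \eqref{SmallnessConditionCG}. By the upper bound in \eqref{LargeDataM}, once $M, \lambda$ and $B_\lambda$ are frozen the right-hand side is bounded below by the fixed positive number $\tfrac{1}{C^*}\exp(-C^* C M^4)$. For the left-hand side I would scale out $M$: writing $v_0 := \curl(\phi_L B_\lambda)$ so that $u_0 = M v_0$, quadraticity gives $\|\mathbb{P}\,\Omega_{u_0}\|_E = M^2\,\|\mathbb{P}\,\Omega_{v_0}\|_E$, and it suffices to make $\|\mathbb{P}\,\Omega_{v_0}\|_E$ arbitrarily small by enlarging $L$. To pass from the $L^r$ bounds of Lemma~\ref{ForcingTermLemma} to the norm \eqref{CGSmallCond}, I would use the elementary Littlewood--Paley fact that for $1 < r_1 < 3 < r_2 < \infty$ one has $\|f\|_{\dot{B}^{-1}_{\infty,1}} \lesssim \|f\|_{L^{r_1}} + \|f\|_{L^{r_2}}$: splitting the dyadic sum at frequency $1$ and invoking Bernstein \eqref{BernsteinIneq1}, the low half converges because $3/r_1 - 1 > 0$ and the high half because $3/r_2 - 1 < 0$; the same splitting handles the second, square-function piece of \eqref{CGSmallCond}. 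Choosing $r_1 = 3 - \delta$ and some $r_2 \in (3,\infty)$, both satisfying \eqref{relazione r q}, the $E$ norm is then controlled by $\int_0^\infty \|\mathbb{P}\Omega_{v_0}(t)\|_{L^{r_i}}\,\de t$ and $(\int_0^\infty \|\mathbb{P}\Omega_{v_0}(t)\|_{L^{r_i}}^2\, t\,\de t)^{1/2}$. Feeding in Lemma~\ref{ForcingTermLemma} and splitting each time integral at $t = L^{\varepsilon}$, the short-time contributions are powers $L^{-\mu}$ with $\mu = 1 - \varepsilon - 3/r_i + 3/q > 0$ (positivity being precisely \eqref{relazione r q}) integrated over a window of length $L^{\varepsilon}$, hence $\lesssim L^{\varepsilon - \mu}$, while the long-time contributions carry the factor $L^{-\theta s}$ against an integrable power of $t$. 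Choosing $\varepsilon$ small and then $s, L$ large makes $\|\mathbb{P}\,\Omega_{v_0}\|_E \lesssim_\lambda L^{-\kappa}$ for some $\kappa > 0$, so the left-hand side $M^2\,O(L^{-\kappa})$ of \eqref{SmallnessConditionCG} drops below its fixed threshold for $L$ large, and Theorem~\ref{teo:Chemin-Gallagher} produces a unique global solution $u \in C_b(\R^+; \dot{H}^{1/2}) \cap L^2(\R^+; \dot{H}^{3/2})$.

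For the remaining claims I would propagate Sobolev norms: the global control of the critical $\dot{H}^{1/2}$ norm rules out blow-up, and since $u_0 \in H^r$ for $r$ up to $\sim s$, a standard energy estimate (with the critical quantity controlling the nonlinearity, as in the quantitative bound \eqref{eq:stima Hs quantitativa}) upgrades the solution to $u \in C(\R^+; H^r) \cap L^2(\R^+; \dot{H}^{r+1})$, whence $u$ is smooth and decaying by Sobolev embedding. The energy identity \eqref{EnergyId} then follows by pairing \eqref{eq:ns} with $u$ and integrating by parts, which is licit for such smooth finite-energy fields, the pressure and transport terms dropping out by $\dive u = 0$. The one genuinely delicate point is the balance inside \eqref{SmallnessConditionCG}: its left-hand side grows like $M^2$ while its threshold decays like $\exp(-C^* M^4)$, so one must freeze $M$ (together with $\lambda, R, q$ and $\varepsilon$) first---making the threshold a fixed, if exponentially small, positive number---and only afterwards exploit the smallness in $L$, which is polynomial and therefore cannot be defeated by any fixed exponential in $M$.
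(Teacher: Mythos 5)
Your proposal is correct and follows essentially the same route as the paper's proof: the size claim via Lemma~\ref{SizeLemmaM}, verification of the Chemin--Gallagher condition \eqref{SmallnessConditionCG} by feeding Lemma~\ref{ForcingTermLemma} into a Littlewood--Paley/Bernstein splitting at frequency $1$ (your embedding $\|f\|_{\dot{B}^{-1}_{\infty,1}} \lesssim \|f\|_{L^{r_1}} + \|f\|_{L^{r_2}}$, $r_1<3<r_2$, is exactly the paper's separate treatment of $j<0$ and $j\geq 0$) combined with the time splitting at $t=L^{\varepsilon}$, and then Theorem~\ref{teo:Chemin-Gallagher}. Your explicit quadratic rescaling $\|\mathbb{P}\,\Omega_{u_0}\|_E = M^2\|\mathbb{P}\,\Omega_{v_0}\|_E$ and the parameter ordering (freeze $M$, then take $L$ large so the polynomial decay in $L$ beats the fixed exponential threshold) are just a cleaner packaging of the paper's implicit choice $L \gg M$, and your use of $H^r$ regularity propagation in place of the paper's appeal to Prodi--Serrin for smoothness is an equally valid minor variant.
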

\begin{proof}
First of all, the size of the initial datum follows from Lemma \ref{SizeLemmaM}.
Thus, in order to achieve the proof we only need to show that the condition \eqref{SmallnessConditionCG} is satisfied. Notice that, since $u_0\in  H^{1}(\R^3)$, the smoothness of the solution follows by the classical Prodi-Serrin conditions (in fact $u\in L^4_tL^6_x$).
First, from the inequality \eqref{LargeDataM} we know that $\| u_0 \|_{\dot{B}^{-1}_{\infty, 2}}
\lesssim M$.
Thus, it is sufficient to show that
\begin{equation}\label{SmallnessConditionBis}
\| \mathbb{P} \, \Omega_{u_0} \|_{E} 
\leq \frac{1}{C^*} 
\exp\left(-C^*  M^4  \right).
\end{equation}
To compute this norm we distinguish small and large frequencies (in the definition \eqref{Def:OmegaW0}) and small and large times. \\ 
For high frequencies and large times $t > L^{\varepsilon}$ by Bernstein inequality \eqref{BernsteinIneq1}
\begin{equation}\label{correzione review}
\| \Delta_j \mathbb{P} \, \Omega_{u_0} (t,\cdot)\|_{L^\infty} \lesssim 2^{j \frac{3}{r}}\|  \mathbb{P} \, \Omega_{u_0} (t,\cdot)\|_{L^{r}},
\end{equation}
with $r>3$, and then we use \eqref{CommBound2} to get for some $\theta >0$ that
\begin{align*}
 \sum_{ j \geq 0} 2^{- j} \int_{L^{\varepsilon}}^{\infty}  
&  \| \Delta_j \mathbb{P} \, \Omega_{u_0}(t,\cdot) \|_{L^\infty}    \,dt + 
\sum_{ j \geq 0} 2^{-j} \left( \int_{L^{\varepsilon}}^{\infty} \| \Delta_j \mathbb{P} \, \Omega_{u_0} (t,\cdot)\|^2_{L^{\infty}} t \, dt \right)^{1/2}
\\ 
&\lesssim\sum_{ j \geq 0} 2^{- j+\frac3r j} \int_{L^{\varepsilon}}^{\infty}  
 \| \mathbb{P} \, \Omega_{u_0}(t,\cdot) \|_{L^r}    \,dt + 
\sum_{ j \geq 0} 2^{-j+\frac3r j} \left( \int_{L^{\varepsilon}}^{\infty} \|  \mathbb{P} \, \Omega_{u_0} (t,\cdot)\|^2_{L^{r}} t \, dt \right)^{1/2}
\\
&
\lesssim L^{- \theta s} \sum_{ j \geq 0}   2^{- j+\frac3r j} \int_{L^{\varepsilon}}^{\infty} t^{-3+\frac{3}{2r}}   dt + 
L^{- \theta s} \sum_{ j \geq 0} 2^{-j+\frac3r j} \left( \int_{L^{\varepsilon}}^{\infty} t^{-5+\frac3r}  \, dt \right)^{1/2}
\\ 
&
\lesssim L^{- \theta s - \left(2-\frac{3}{2r}\right)\varepsilon} \sum_{ j \geq 0}   2^{- j+\frac3r j} 
\lesssim L^{- \theta s - \left(2-\frac{3}{2r}\right)\varepsilon} \ll \frac{1}{C^*} 
\exp\left(-C^*  M^4  \right), 
\end{align*}
where in the last inequality we consider $L \gg M$ and that the sum is finite if $r>3$. This choice of $L$ will be implicitly  assumed until the end of the proof.\\
\\
For small times we use again Bernstein inequality \eqref{BernsteinIneq1}
\begin{equation}\label{correzione review 2}
\| \Delta_j \mathbb{P} \, \Omega_{u_0} (t,\cdot)\|_{L^\infty} \lesssim 2^{j \frac{3}{r}}\|  \mathbb{P} \, \Omega_{u_0} (t,\cdot)\|_{L^{r}},
\end{equation} 
together with \eqref{CommBound2} for some $r>2q$ and we get that
\begin{align*}
 \sum_{ j \geq 0} 2^{- j} \int_0^{L^{\varepsilon}}  
&  \| \Delta_j \mathbb{P} \, \Omega_{u_0}(t,\cdot) \|_{L^\infty}   dt + 
\sum_{ j \geq 0} 2^{-j} \left( \int_0^{L^{\varepsilon}} \| \Delta_j \mathbb{P} \, \Omega_{u_0}(t,\cdot) \|^2_{L^{\infty}} t \, dt \right)^{1/2}
\\ 
&
\sum_{ j \geq 0} 2^{- j+\frac3r j} \int_0^{L^{\varepsilon}} \| \mathbb{P} \, \Omega_{u_0}(t,\cdot) \|_{L^r}   dt + 
\sum_{ j \geq 0} 2^{-j+\frac3r j} \left( \int_0^{L^{\varepsilon}} \| \mathbb{P} \, \Omega_{u_0}(t,\cdot) \|^2_{L^{r}} t \, dt \right)^{1/2}
\\
&
\lesssim L^{-1-\frac{3}{2q} + \varepsilon} \sum_{ j \geq 0}   2^{- j+\frac3r j} \int_0^{L^{\varepsilon}}  1 \, dt + 
L^{-1-\frac{3}{q} + \varepsilon} \sum_{ j \geq 0} 2^{-j+\frac3r j} \left( \int_0^{L^{\varepsilon}} t  \, dt \right)^{1/2}
\\ 
&
\lesssim L^{-1-\frac{3}{2q} + 2\varepsilon }  \sum_{ j \geq 0}   2^{- j+\frac3r j}
\lesssim L^{-1-\frac{3}{2q} + 2\varepsilon } \ll \frac{1}{C^*} 
\exp\left(-C^*  M^4  \right).
\end{align*}
To handle small frequencies $j < 0$ we first notice that we have 
\begin{equation}\label{fdhjskduhgudks}
\| \Delta_j \mathbb{P} \, \Omega_{u_0} (t,\cdot)\|_{L^\infty} \lesssim 2^{j \frac{3}{3 - \delta}}
\|  \mathbb{P} \, \Omega_{u_0} (t,\cdot)\|_{L^{3-\delta}},
\end{equation}
by Bernstein inequality \eqref{BernsteinIneq1}. 
Thus for large times $t > L^{\varepsilon}$ 
we can use \eqref{fdhjskduhgudks} and, if we consider $0<\e\leq \delta$, we can apply \eqref{CommBound3} to get: 
\begin{align*}
 \sum_{ j < 0 } 2^{- j} \int_{L^{\varepsilon}}^{\infty}  
&  \| \Delta_j \mathbb{P} \, \Omega_{u_0}(t,\cdot) \|_{L^\infty}   dt + 
\sum_{ j < 0} 2^{-j} \left( \int_{L^{\varepsilon}}^{\infty} \| \Delta_j \mathbb{P} \, \Omega_{u_0} (t,\cdot)\|^2_{L^{\infty}} t \, dt \right)^{1/2}
\\ 
&
\lesssim  L^{- \theta s} \sum_{ j < 0}   2^{-j + j\frac{3}{3-\delta}} 
\int_{L^{\varepsilon}}^{\infty} t^{-5/2 + C \delta}   dt + 
L^{- \theta s} \sum_{ j < 0 } 2^{-j + j\frac{\delta}{3-\delta}} \left( \int_{L^{\varepsilon}}^{\infty} t^{-4 + C \delta}  \, dt \right)^{1/2}
\\ 
&
\lesssim L^{- \theta s - \frac32 \varepsilon+C\delta} \sum_{ j < 0}   2^{j\frac{\delta}{3-\delta}}
\lesssim_{\delta} L^{- \theta s - \frac32 \varepsilon+C\delta} \ll \frac{1}{C^*} 
\exp\left(-C^*  M^4  \right).
\end{align*}
Finally, for small frequencies and small times, we use again \eqref{fdhjskduhgudks} and \eqref{CommBound3} and we get 
\begin{align*}
 \sum_{ j < 0} 2^{- j} \int_0^{L^{\varepsilon}}  
&  \| \Delta_j \mathbb{P} \, \Omega_{u_0} (t,\cdot)\|_{L^\infty}   dt + 
\sum_{ j < 0} 2^{-j} \left( \int_0^{L^{\varepsilon}} \| \Delta_j \mathbb{P} \, \Omega_{u_0}(t,\cdot) \|^2_{L^{\infty}} t \, dt \right)^{1/2}
\\ 
&
\lesssim L^{- c \delta} \sum_{ j < 0}   2^{j\frac{\delta}{3-\delta}} \int_0^{L^{\varepsilon}} 1  \, dt + 
L^{- c \delta} \sum_{ j < 0} 2^{j\frac{\delta}{3-\delta}} \left( \int_0^{L^{\varepsilon}} t  \, dt \right)^{1/2}
\\ 
&
\lesssim L^{-c \delta + \varepsilon }  \sum_{ j < 0}   2^{j\frac{\delta}{3-\delta}}
\lesssim_\delta  L^{- c \delta +  \varepsilon} \ll \frac{1}{C^*} 
\exp\left(-C^*  M^4  \right),
\end{align*}
taking $\varepsilon < \frac{c}{2} \delta$, that concludes the proof.
\end{proof}

\section{Vortex reconnection} 
In this section we move our attention to the problem of the vortex reconnetion. We first provide the building blocks for the construction of the initial data and then we will prove {\bf Theorem 2}. 
\subsection{Construction of $u_0^1$} First of all, we define the Beltrami field $B_N$ as follows
\begin{equation}\label{def:BN}
B_{N}(x_1,x_2,x_3):=(\sin N x_3,\cos N x_3, 0).
\end{equation}
It is immediate to check that $B_{N}$ has no zeros.
Moreover, we consider the scalar smooth function $\phi:\R^3\to\R$ defined as
\begin{equation}\label{Rec1Size}
\phi(x):=\frac{1}{(1+|x|^2)^\alpha}, \qquad \alpha \geq 1.
\end{equation}
Note that the function $\phi$ is localized around the origin with high precision when we take $\alpha$ very large.
Lastly, we define
\begin{align}
u_0^{1} &:= \curl(\phi B_{N})=N\phi B_{N}+\nabla\phi\wedge B_{N},\label{def:u01-1}\\
\omega_0^{1}&:=\curl u_0^{1}=\curl\curl(\phi B_{N})\nonumber\\
&=N^2\phi B_{N}+ N\nabla\phi \wedge B_{N}+(B_{N}\cdot\nabla)\nabla\phi-\Delta\phi B_{N}-(\nabla\phi\cdot\nabla)B_{N}.\label{def:omega01}
\end{align}
For simplicity, we also define $\Omega:=\nabla\phi\wedge B_{N}$ and then we can rewrite $u_0^1,\omega_0^1$ as follows
\begin{align}
u_0^{1} &=N\phi B_{N}+\Omega,\\
\omega_0^{1}&=N^2\phi B_{N} + N \Omega+\curl\Omega.\label{def:omega01-2}
\end{align}
It is worth to note that the function $\phi$ has been chosen so that $u_0^{1}$ is a ``regular perturbation" of its principal part $N \phi B_N$. 
We now prove two elementary properties of $\omega_0^1$ and $\phi$.
\begin{prop}
Let $\omega_0^1$ be as in \eqref{def:omega01-2} and let $N \gg \alpha$. Then, $\omega_0^1$ does not have any zero.
\end{prop}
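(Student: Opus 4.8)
The plan is to prove that $\omega_0^1$ never vanishes by a domination argument: the principal term $N^2\phi B_N$ is strictly larger in magnitude than the sum of all remaining terms once $N\gg\alpha$. The crucial structural fact is that $|B_N(x)|=\sqrt{\sin^2 Nx_3+\cos^2 Nx_3}=1$ for every $x$, so the leading term has magnitude exactly $N^2\phi(x)$, while $\phi(x)=(1+|x|^2)^{-\alpha}>0$ everywhere. Writing $\omega_0^1=N^2\phi B_N+(N\Omega+\curl\Omega)$ and applying the reverse triangle inequality, it suffices to show that $N|\Omega|+|\curl\Omega|<N^2\phi$ pointwise.

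First I would record the two elementary pointwise bounds on the derivatives of $\phi$ that make every remainder term comparable to $\phi$ itself. A direct computation gives $\nabla\phi=-2\alpha x(1+|x|^2)^{-\alpha-1}$, hence, using $2|x|\leq 1+|x|^2$, one obtains $|\nabla\phi|\leq\alpha\phi$. Differentiating once more and using $|x|^2\leq\tfrac14(1+|x|^2)^2$ gives the Hessian bound $|\nabla^2\phi|\lesssim\alpha^2\phi$. Since $|\Omega|=|\nabla\phi\wedge B_N|\leq|\nabla\phi|\leq\alpha\phi$, the order-$N$ term satisfies $N|\Omega|\leq N\alpha\phi$.

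The point requiring care — and the one I expect to be the main obstacle — is that $\curl\Omega$ is \emph{not} of lower order in $N$ as one might naively guess. Indeed, comparing \eqref{def:omega01} with \eqref{def:omega01-2} yields $\curl\Omega=(B_N\cdot\nabla)\nabla\phi-\Delta\phi\,B_N-(\nabla\phi\cdot\nabla)B_N$, and while the first two summands are $O(\alpha^2\phi)$ by the Hessian bound, the last one carries a factor $N$: since $\partial_1 B_N=\partial_2 B_N=0$ and $\partial_3 B_N=N(\cos Nx_3,-\sin Nx_3,0)$, we get $|(\nabla\phi\cdot\nabla)B_N|=N|\partial_3\phi|\leq N|\nabla\phi|\leq N\alpha\phi$. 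Thus $|\curl\Omega|\leq N\alpha\phi+C\alpha^2\phi$, so both the $N\Omega$ term and part of $\curl\Omega$ live at order $N$, but each remains a factor $\alpha/N$ smaller than the leading term.

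Combining the three estimates yields the pointwise lower bound
\begin{equation*}
|\omega_0^1(x)|\geq N^2\phi-N|\Omega|-|\curl\Omega|\geq\phi(x)\left(N^2-2N\alpha-C\alpha^2\right),
\end{equation*}
and since $\phi>0$ everywhere this is strictly positive as soon as $N^2>2N\alpha+C\alpha^2$, which holds for all $N\gg\alpha$. Hence $\omega_0^1$ has no zeros.
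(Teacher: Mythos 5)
Your proof is correct and follows essentially the same strategy as the paper: a reverse-triangle-inequality domination argument in which the principal term $N^2\phi B_N$ (of modulus exactly $N^2\phi$, since $|B_N|\equiv 1$) beats the remainder terms pointwise, thanks to the bounds $|\nabla\phi|\leq\alpha\phi$ and $|\nabla^2\phi|\lesssim\alpha^2\phi$. The only difference is one of completeness: the paper's proof is a brief sketch invoking ``the decay properties of the derivatives of $\phi$,'' whereas you make explicit the genuinely relevant subtlety that $\curl\Omega$ contains the order-$N$ term $(\nabla\phi\cdot\nabla)B_N$, which is still harmless because it carries the factor $\alpha\phi$, so the conclusion holds exactly as stated once $N\gg\alpha$.
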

\begin{proof}
Since $\phi$ is a strictly positive function, the vector field $\phi B_{N}$ is always different from $0$. Now, if $\alpha \ll N$ it is not hard to show that the vector field
$\Omega$ can be considered as a small perturbation of $N\phi B_{N}$.
This is essentially due to the decay properties of the derivatives of $\phi$, namely to the fact that $\nabla^k \phi$ decays faster then $\nabla^m \phi$ if $k> m$. Thus, since by definition the function $\Omega$ involves one or two derivatives of $\phi$, we obtain the following point-wise bound
\begin{equation}
|\omega_0^1(x)|\geq N^2|\phi(x) B_{N}(x)|-|N \Omega(x)+\curl\Omega(x)|>\frac{c N^2}{(1+|x|^2)^\alpha},\label{est:lb-omega1}
\end{equation}
that is valid for some small constant $0< c <1$ as long as $N \gg \alpha$. The inequality \eqref{est:lb-omega1} guarantees that the vector field $\omega_0^1$ does not have any zero.
\end{proof}

\begin{prop}\label{pro:decayphi}
Let $\phi$ be as in \eqref{Rec1Size} and denote by $\widehat{\phi}$ its Fourier transform. Then, for any $C_1>0$ there exists $C_2>0$ such that for every $n\in \N$ with $n\leq C_1\alpha$ the following point-wise bound holds
\begin{equation}\label{est:point-psia}
|\widehat{\phi}(\xi)|\leq C_2 \frac{\alpha^n}{|\xi|^n}.
\end{equation}
\end{prop}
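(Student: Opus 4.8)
The plan is to avoid the modified Bessel function that represents $\widehat\phi$ and instead exploit the subordination (Gamma) representation of $\phi$. Writing
\[
(1+|x|^2)^{-\alpha}=\frac{1}{\Gamma(\alpha)}\int_0^\infty s^{\alpha-1}e^{-s(1+|x|^2)}\,\de s
\]
and exchanging the order of integration (the integrand is positive, so Tonelli applies), the Gaussian Fourier transform $\int_{\R^3}e^{-ix\cdot\xi}e^{-s|x|^2}\,\de x=(\pi/s)^{3/2}e^{-|\xi|^2/(4s)}$ gives the representation
\[
\widehat\phi(\xi)=\frac{\pi^{3/2}}{\Gamma(\alpha)}\int_0^\infty s^{\alpha-\frac52}\,e^{-s}\,e^{-\rho^2/(4s)}\,\de s,\qquad \rho:=|\xi|.
\]
This reduces the pointwise bound to a one-dimensional Laplace-type integral in which both the parameter $\alpha$ and the decay in $\rho$ appear explicitly.

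To extract the decay $\rho^{-n}$ I would multiply by $\rho^n$ and estimate the Gaussian factor pointwise in $s$. Setting $u:=\rho^2/(4s)$ and using $\sup_{u>0}u^{m}e^{-u}=(m/e)^{m}$ with $m=n/2$,
\[
\rho^n e^{-\rho^2/(4s)}=(4s)^{n/2}u^{n/2}e^{-u}\le\Big(\tfrac{2n}{e}\Big)^{n/2}s^{n/2}.
\]
Inserting this and integrating the remaining $\Gamma$-integral yields
\[
\rho^n\,|\widehat\phi(\xi)|\le\frac{\pi^{3/2}}{\Gamma(\alpha)}\Big(\tfrac{2n}{e}\Big)^{n/2}\int_0^\infty s^{\alpha+\frac n2-\frac52}e^{-s}\,\de s
=\pi^{3/2}\,\frac{\Gamma\!\big(\alpha+\tfrac n2-\tfrac32\big)}{\Gamma(\alpha)}\Big(\tfrac{2n}{e}\Big)^{n/2}.
\]
Alternatively one can proceed more pedestrianly: an induction shows $\partial_j^n\phi=P_n(x)\,(1+|x|^2)^{-\alpha-n}$ with $|P_n(x)|\lesssim (C_1\alpha)^n(1+|x|^2)^{n/2}$ when $n\le C_1\alpha$, so that $|\xi|^n|\widehat\phi|\lesssim_n\max_j\|\partial_j^n\phi\|_{L^1}\lesssim (C_1\alpha)^n\int_{\R^3}(1+|x|^2)^{-\alpha-n/2}\,\de x$, and the last integral is $\lesssim 1$; but the subordination route keeps the $\alpha$-dependence far more transparent.

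It remains to turn the right-hand side into $\alpha^n$, and this is exactly where the hypothesis $n\le C_1\alpha$ enters. The factorial factor is controlled by $(2n/e)^{n/2}\le (2C_1\alpha/e)^{n/2}\lesssim_{C_1}\alpha^{n/2}$, while for the Gamma ratio I would use $\Gamma(\alpha+\delta)/\Gamma(\alpha)\le(\alpha+\delta)^{\delta}$ (valid for $\delta\ge0$ by log-convexity of $\Gamma$) with $\delta=\tfrac n2-\tfrac32$, together with $\alpha+\tfrac n2\le(1+\tfrac{C_1}{2})\alpha$, to get $\Gamma(\alpha+\tfrac n2-\tfrac32)/\Gamma(\alpha)\lesssim_{C_1}\alpha^{n/2}$. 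Multiplying the two $\alpha^{n/2}$ contributions produces precisely $\alpha^n$, with a constant $C_2$ depending only on $C_1$ (and the dimension), which is the assertion. The main obstacle is exactly this bookkeeping: the $n$-th derivative of the spike $\phi$, concentrated at scale $\alpha^{-1/2}$, costs a factorial $(n/e)^{n/2}$ from its oscillation/Hermite structure and a moment factor $\alpha^{n/2}$; neither alone gives $\alpha^n$, and it is only the balance enforced by $n\le C_1\alpha$ — which prevents the number of derivatives from outrunning the decay budget $2\alpha$ of $\phi$ — that makes the two factors combine into the clean power $\alpha^n$.
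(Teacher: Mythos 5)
Your subordination argument is correct in substance, and it is a genuinely different route from the paper's. The paper's proof is precisely the ``pedestrian'' alternative you sketch at the end: it integrates by parts $n$ times to get $|\widehat{\phi}(\xi)|\leq |\xi|^{-n}\int_{\R^3}|\nabla^n\phi(x)|\,\de x$, and then disposes of the $L^1$ norm with the single inequality $\alpha(\alpha+1)\cdots(\alpha+n)\leq C_2\alpha^n$ for $n\leq C_1\alpha$; no heat-kernel or Gamma representation appears. What your route buys: the exact identity $\widehat{\phi}(\xi)=\frac{\pi^{3/2}}{\Gamma(\alpha)}\int_0^\infty s^{\alpha-\frac{5}{2}}e^{-s}e^{-|\xi|^2/(4s)}\,\de s$ (a modified Bessel function in disguise) replaces the Fa\`a di Bruno combinatorics of $\nabla^n\phi$ --- which the paper never actually carries out --- by a one-dimensional Laplace integral, and it isolates the two competing quantities, the oscillation cost $(2n/e)^{n/2}$ and the moment cost $\Gamma\big(\alpha+\frac{n}{2}-\frac{3}{2}\big)/\Gamma(\alpha)$, so that the role of the hypothesis $n\leq C_1\alpha$ becomes completely transparent. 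What the paper's route buys is brevity.

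One piece of bookkeeping should be flagged, although it affects the paper's own proof in exactly the same way, so I record it as an imprecision rather than a gap. The factors you discard at the end are geometric in $n$, not bounded: $(2n/e)^{n/2}\leq (2C_1/e)^{n/2}\alpha^{n/2}$ and $\Gamma\big(\alpha+\frac{n}{2}-\frac{3}{2}\big)/\Gamma(\alpha)\leq \big(\alpha+\frac{n}{2}\big)^{n/2}\leq \big(1+\frac{C_1}{2}\big)^{n/2}\alpha^{n/2}$, so what you actually prove is $|\widehat{\phi}(\xi)|\leq \big(C(C_1)\,\alpha\big)^n/|\xi|^n$, and the factor $C(C_1)^n$ cannot in general be removed: by the large-$|\xi|$ asymptotics of your integral, $\widehat{\phi}(\xi)$ behaves like $|\xi|^{\alpha-2}e^{-|\xi|}/\big(2^{\alpha}\Gamma(\alpha)\big)$ up to absolute constants, and for $n$ comparable to $C_1\alpha$ with $C_1$ large, $\sup_\xi |\xi|^n|\widehat{\phi}(\xi)|$ really grows like $(C\alpha)^n$ with $C=C(C_1)>1$. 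Hence \eqref{est:point-psia}, as literally stated, holds only if $C_2$ is allowed to depend on $n$ (equivalently on $\alpha$, since $n\leq C_1\alpha$). This is how the paper uses it --- every later invocation carries $n$-dependent constants (the $\lesssim_n$ and the requirement $n>r+2$ in Proposition \ref{prop:sizeu01}, the constant $C(r,n)$ in Lemma \ref{lem:heat-omega01}) --- and the paper's product inequality has the identical defect, its left side being at least $\alpha^{n+1}e^{n(n+1)/(4\alpha)}$ once $n\leq\alpha$. Finally, two minor integrability caveats shared with the paper: your Tonelli step needs $\phi\in L^1$, i.e.\ $\alpha>\frac{3}{2}$, and $\Gamma\big(\alpha+\frac{n}{2}-\frac{3}{2}\big)$ is finite only for $\alpha+\frac{n}{2}>\frac{3}{2}$; the paper's proof needs the same restrictions for $\nabla^n\phi\in L^1$, and both are immaterial since the proposition is only ever applied with $\alpha$ and $n$ large.
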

\begin{proof}
The bound easily follows using the regularity of the function $\phi$:
from the definition of Fourier transform and by integrating by parts $n$ times
\begin{equation}
|\widehat{\phi}(\xi)|=\left|\int_{\R^3} e^{i\xi\cdot x}\phi(x)\de x\right|\leq \frac{1}{|\xi|^n}\int_{\R^3} |\nabla^n\phi(x)|\de  x 
\leq C_2  \frac{\alpha^n}{|\xi|^n}, \qquad n \leq C_1 \alpha.
\end{equation}
Here we used that for $n \leq C_1\alpha$ one has
$$
 \alpha (\alpha +1) \dots {(\alpha +n)} \leq C_2 \alpha^n,
$$
and this concludes the proof.
\end{proof}

\begin{prop}
Let $B_N$ and $\phi$ be as in \eqref{def:BN} and \eqref{Rec1Size} respectively. Then, for every $C>0$ and every $n\in\N$ with $n\leq C\alpha$, the following point-wise upper-bound holds
\begin{equation}
|\widehat{\phi B_{N}}(\xi)|\lesssim_n \frac{\alpha^n}{\sqrt{|\xi_1|^2+|\xi_2|^2+|\xi_3+N|^2}^{\,n}} +
\frac{\alpha^n}{\sqrt{|\xi_1|^2+|\xi_2|^2+|\xi_3-N|^2}^{\,n}}, \quad n \leq C\alpha.
\end{equation}
\end{prop}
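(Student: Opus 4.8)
The plan is to exploit the fact that $B_N$ is a pure high-frequency modulation in the $x_3$ direction: multiplying $\phi$ by the components of $B_N$ only translates $\widehat{\phi}$ by $\pm N$ along the $\xi_3$ axis, so the pointwise decay of $\widehat{\phi}$ established in Proposition \ref{pro:decayphi} transfers directly to $\widehat{\phi B_N}$, now centered at the two shifted points $(\xi_1,\xi_2,\xi_3\pm N)$.

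First I would rewrite the components of $B_N$ via Euler's formula, writing $\sin(Nx_3)=\tfrac{1}{2i}(e^{iNx_3}-e^{-iNx_3})$ and $\cos(Nx_3)=\tfrac12(e^{iNx_3}+e^{-iNx_3})$. Thus every component of $\phi B_N$ is a linear combination, with coefficients of modulus $\le \tfrac12$, of the two modulated functions $\phi(x)e^{\pm iNx_3}$. Next I would use the modulation identity for the Fourier transform (with the convention $\widehat{f}(\xi)=\int e^{i\xi\cdot x}f(x)\,\de x$ adopted in the proof of Proposition \ref{pro:decayphi}): multiplication by $e^{\pm iNx_3}$ translates the transform, namely
\[
\widehat{\phi\, e^{\pm iNx_3}}(\xi)=\widehat{\phi}(\xi_1,\xi_2,\xi_3\pm N).
\]
Applying the triangle inequality to the (at most two) exponential pieces in each component, and using that only two components are nonzero, the Euclidean norm of the vector-valued transform satisfies
\[
|\widehat{\phi B_N}(\xi)|\lesssim |\widehat{\phi}(\xi_1,\xi_2,\xi_3+N)|+|\widehat{\phi}(\xi_1,\xi_2,\xi_3-N)|.
\]

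Finally I would insert the decay bound $|\widehat{\phi}(\eta)|\le C_2\,\alpha^n/|\eta|^n$ from Proposition \ref{pro:decayphi}, valid for $n\le C\alpha$, evaluated at the two shifted frequencies $\eta=(\xi_1,\xi_2,\xi_3\pm N)$, for which $|\eta|=\sqrt{|\xi_1|^2+|\xi_2|^2+|\xi_3\pm N|^2}$. This produces exactly the two claimed terms, and the admissible range $n\le C\alpha$ is inherited verbatim from that proposition.

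I do not expect a genuine obstacle here: the argument is a one-line modulation identity followed by the previously established decay estimate. The only points requiring minor care are bookkeeping the Fourier convention so that the shifts land at $\xi_3\pm N$ (and not at $\mp N$), and absorbing the harmless numerical coefficients $\tfrac12$, $\tfrac{1}{2i}$ together with the factor coming from the Euclidean norm into the implicit constant $\lesssim_n$.
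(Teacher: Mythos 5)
Your proof is correct and is essentially the paper's own argument: the paper decomposes $\widehat{B_N}$ into four delta distributions at $\xi_3=\pm N$ and uses $\widehat{\phi B_N}=\widehat{\phi}*\widehat{B_N}$, which is exactly your Euler-formula/modulation identity viewed on the Fourier side, followed by the same application of Proposition \ref{pro:decayphi}. No substantive difference.
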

\begin{proof}
First of all, notice that (up to a constant)
\begin{equation}\label{DecB}
\widehat{B}_{N}(\xi_1,\xi_2,\xi_3)  = \sum_{\ell = 1}^{4} \widehat{B_{N}^{\ell}}(\xi_1,\xi_2,\xi_3),
\end{equation}
where
$$
\widehat{B_{N}^{1}} := \frac12 (\delta_0(\xi_1)\otimes\delta_0(\xi_2)\otimes\delta_0(\xi_3 - N), 
0,0), 
\qquad 
\widehat{B_{N}^{2}} := \frac12 (\delta_0(\xi_1)\otimes\delta_0(\xi_2)\otimes\delta_0(\xi_3 +  N),0,0),
$$

$$
\widehat{B_{N}^{3}} := \frac1{2i} (0, 
\delta_0(\xi_1)\otimes\delta_0(\xi_2)\otimes\delta_0(\xi_3 - N),0), 
\qquad 
\widehat{B_{N}^{4}} := -  \frac1{2i} (0, 
\delta_0(\xi_1)\otimes\delta_0(\xi_2)\otimes\delta_0(\xi_3 +  N),0).
$$
and $\delta_0$ is the delta distribution centered in $0$. For definiteness we will only analyze the 
contribution $\widehat{B_{N}^{1}}$. The analysis of the other 
terms requires obvious modifications. 
We have 
$$
\widehat{\phi B_{N}^{1}}(\xi)=\widehat{\phi}* \widehat{B_{N}^{1}}(\xi)
= (\widehat{\phi}(\xi_1,\xi_2,\xi_3-N),0,0),
$$
and by using Proposition \ref{pro:decayphi} we obtain that
\begin{equation}
|\widehat{\phi B_{N}^{1}}(\xi)|\lesssim \frac{\alpha^n}{\sqrt{|\xi_1|^2+|\xi_2|^2+|\xi_3-N|^2}^{\,n}}, \quad n \leq C\alpha, 
\end{equation}
so the statement follows.
\end{proof}

The above estimate will be useful while dealing with high-frequencies in the estimate of the $H^r$ norm of $\omega_0^1$.
\begin{prop}\label{prop:sizeu01}
Let $u_0^1$ as in \eqref{def:u01-1} and assume that $N \gg \alpha$. Then, for every~$r\geq 0$,  $u_0^1\in H^r(\R^3)$ and satisfies
\begin{equation}
\|u_0^1\|_{H^r}\leq C(r) N^{r+1},\qquad \|\omega_0^1\|_{H^r}\leq C(r) N^{r+2}.
\end{equation}
\end{prop}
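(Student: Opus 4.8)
The plan is to pass to the Fourier side and reduce both inequalities to a single weighted $L^2$ estimate for $\widehat{\phi B_N}$, exploiting that $u_0^1=\curl(\phi B_N)$ and $\omega_0^1=\curl\curl(\phi B_N)$ are merely the curl and the double curl of $\phi B_N$. Since the symbol of $\curl$ is $i\xi\wedge\,\cdot\,$, we have $|\widehat{u_0^1}(\xi)|\le|\xi|\,|\widehat{\phi B_N}(\xi)|$ and $|\widehat{\omega_0^1}(\xi)|\lesssim|\xi|^2\,|\widehat{\phi B_N}(\xi)|$. By Plancherel and the equivalence $\|f\|_{H^r}^2\sim\int_{\R^3}(1+|\xi|^2)^r|\widehat f(\xi)|^2\,\de\xi$, it then suffices to prove $\int_{\R^3}(1+|\xi|^2)^r|\xi|^2\,|\widehat{\phi B_N}|^2\,\de\xi\lesssim_r N^{2(r+1)}$, together with the same statement with $|\xi|^2$ replaced by $|\xi|^4$ and right-hand side $N^{2(r+2)}$.

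I would then split the frequency integral according to whether $|\xi|\le 2N$ or $|\xi|>2N$, the idea being that $\widehat{\phi B_N}$ concentrates its mass near the two frequencies $\pm N e_3$, so the region $|\xi|\le 2N$ carries the bulk of the norm while $|\xi|>2N$ is a rapidly decaying tail. On $\{|\xi|\le 2N\}$ one has $(1+|\xi|^2)^r|\xi|^2\lesssim_r N^{2r+2}$, so pulling this factor out and using Plancherel together with $|B_N|\equiv 1$ gives $\int_{|\xi|\le 2N}(1+|\xi|^2)^r|\xi|^2|\widehat{\phi B_N}|^2\lesssim_r N^{2r+2}\|\phi B_N\|_{L^2}^2=N^{2r+2}\|\phi\|_{L^2}^2$. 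Since $\int_{\R^3}(1+|x|^2)^{-2\alpha}\,\de x$ is decreasing in $\alpha$, we have $\|\phi\|_{L^2}\lesssim 1$ uniformly in $\alpha\ge 1$, and this region produces exactly the claimed leading size $N^{r+1}$ (and $N^{r+2}$ for $\omega_0^1$, using $(1+|\xi|^2)^r|\xi|^4\lesssim_r N^{2r+4}$).

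For the tail I would invoke the pointwise Fourier bound on $\widehat{\phi B_N}$ established above, which follows from Proposition \ref{pro:decayphi} through the convolution with $\widehat{B_N}$. When $|\xi|>2N$, both shifted denominators obey $\sqrt{|\xi_1|^2+|\xi_2|^2+|\xi_3\pm N|^2}\ge|\xi|-N\ge|\xi|/2$, so the estimate becomes $|\widehat{\phi B_N}(\xi)|\lesssim_n\alpha^n|\xi|^{-n}$. Choosing $n=n(r)$ large enough that $2n-(2r+2)>3$ makes $\int_{|\xi|>2N}|\xi|^{2r+2-2n}\,\de\xi\sim N^{2r+5-2n}$ convergent, and the prefactor $\alpha^{2n}$ is absorbed using $N\gg\alpha$, so the whole tail is $\lesssim N^{2r+2}$, in fact of strictly lower order. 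Summing the two regions, taking square roots, and repeating verbatim with the extra factor $|\xi|^2$ yields both stated bounds.

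The only genuinely delicate point is this high-frequency tail: one must use that $\phi$ is smooth, so that $\widehat{\phi B_N}$ decays faster than any fixed polynomial away from the peaks $\pm N e_3$ (precisely the content of Proposition \ref{pro:decayphi}), and one must take the decay order $n$ comparable to $r$ and then invoke $N\gg\alpha$ to swallow the resulting $\alpha^{n}$ factors; in the regime of interest $\alpha$ is large, so the constraint $n\le C\alpha$ from Proposition \ref{pro:decayphi} is harmless. Everything else is routine frequency localization. I note that an entirely equivalent, Fourier-free route is available: estimate $\nabla^m u_0^1$ directly by Leibniz, using $\|\nabla^k B_N\|_{L^\infty}=N^k$, $\|\nabla^j\phi\|_{L^2}\lesssim_j\alpha^j$, and $\alpha\le N$ to bound each of the resulting terms by $N^{m+1}$, and similarly $N^{m+2}$ for $\omega_0^1$; summing over $0\le m\le r$ gives the two inequalities with constants uniform in $\alpha\ge 1$.
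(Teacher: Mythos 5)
Your proof is correct, and at heart it is the same Fourier-side argument as the paper's -- Plancherel together with the fact that $\widehat{\phi B_{N}}$ consists of two copies of the rapidly decaying $\widehat{\phi}$ centred at $\pm Ne_3$ -- but your decomposition is genuinely different. The paper does not split frequencies: it recenters the integral by the change of variables $z_3=\xi_3-N$, expands the shifted weight to arrive at \eqref{norma-u0}, i.e. $\|\phi\|_{H^{r+1}}^2+N^{2r}\|\phi\|_{H^1}^2+N^2\|\phi\|_{H^r}^2+N^{2(r+1)}\|\phi\|_{L^2}^2$, bounds every Sobolev norm of $\phi$ by a power of $\alpha$ via \eqref{est:point-psia} (this is \eqref{PhiNorm}), and then invokes $\alpha\ll N$; the bound for $\omega_0^1$ is then obtained for free from $\|\omega_0^1\|_{H^r}\le\|u_0^1\|_{H^{r+1}}$. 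You instead split at $|\xi|=2N$, handle the bulk by Plancherel alone -- which has the nice feature that the leading term $N^{2r+2}\|\phi\|_{L^2}^2$ carries a constant uniform in $\alpha\ge1$ -- and push all the $\alpha$-dependence into the strictly lower-order tail, where you use the pointwise bound on $\widehat{\phi B_{N}}$ (the unnumbered proposition following Proposition \ref{pro:decayphi}). The net effect is a slightly weaker requirement on the parameters: your absorption needs $\alpha^{2n}\lesssim N^{2n-3}$, i.e. $N\gtrsim\alpha^{1+3/(2n-3)}$, whereas the paper's terms such as $N^{2r}\|\phi\|_{H^1}^2$ force $N$ to dominate a larger fixed power of $\alpha$; your closing Leibniz-rule alternative in physical space is also correct and is arguably the most elementary route of all. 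Two small caveats. First, in both your proof and the paper's, ``$N\gg\alpha$'' must be read as ``$N$ larger than a fixed ($r$-dependent) power of $\alpha$''; this is consistent with how the proposition is used later, where constants are allowed to depend on $\alpha$. Second, your remark that the constraint $n\le C_1\alpha$ in Proposition \ref{pro:decayphi} is harmless ``because $\alpha$ is large'' gives the right conclusion for the wrong reason: $\alpha$ need only satisfy $\alpha\ge1$, but the constraint is harmless regardless, because $C_1$ is arbitrary and the constant $C_2$ may depend on the fixed order $n=n(r)$.
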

\begin{proof}
We now compute the $\|\cdot\|_{H^r}$-norm of $u_0^1$. Recalling $u_0^1 := \curl(\phi B_N)$ we have that
\begin{align}
\|u_0^1\|_{H^r}^2&\lesssim \int_{\R^3}(1+|\xi|^2)^r|\xi|^2|\widehat{\phi}(\xi_1,\xi_2,\xi_3-N)|^2\de \xi\nonumber\\
&= \int_{\R^3}(1+z_1^2+z_2^2+(z_3+N)^2)^r(z_1^2+z_2^2+(z_3+N)^2)|\widehat{\phi}(z)|^2\de z\nonumber\\
&\lesssim_r\left(\|\phi\|_{H^{r+1}}^2+N^{2r}\|\phi\|_{H^1}^2+N^2\|\phi\|_{H^r}^2+N^{2(r+1)}\|\phi\|_{L^2}^2\right),\label{norma-u0}
\end{align}
where in the second line we used the change of variables $\xi_3-N=z_3$ while in the third line we used standard inequalities and the definition of $\|\cdot\|_{H^r}$-norm. Moreover, by using \eqref{est:point-psia} it is immediate to show that
\begin{equation}\label{PhiNorm}
\|\phi\|_{H^r}\lesssim_r \alpha^n,
\end{equation}
provided that $n$ is big enough w.r.t. $r$, namely $n>r+2$, and the proof follows from the choice of $\alpha\ll N$. Finally, the estimate on $\omega_0^1$ follows from the fact that
$$
\|\omega_0^1\|_{H^r}\leq \|u_0^1\|_{H^{r+1}},
$$
and this concludes the proof.
\end{proof}

Now, we want to analyze the evolution of $\omega_0^1$ under the heat flow. 
\begin{lem}\label{lem:heat-omega01}
Let $\nu>0$ be fixed and let $\omega_0^1$ be as in \eqref{def:omega01}. Let $F_N(t)$ be defined as
\begin{equation}\label{Def:FN(t)}
F_N(t):=\frac{1}{N}\int_{-\frac{N}{2}}^{\frac{N}{2}}e^{-2\nu t \xi_3^2}\de \xi_3,
\end{equation}
it satisfies $|F_N(t)| \leq 1$.
Moreover, for all $r \in \mathbb{N}$ and for all $n\in \N$ big enough, there exists a constant $C(r,n)>0$ such that the following bound holds
\begin{equation}
\|e^{\nu t\Delta}\omega_0^1\|_{H^r}
\leq 
C(r,n)\left(\alpha^{n}N^{r+2} e^{-\nu \frac12 N^2 t}+ \frac{\alpha^{n}}{N^{n-r-4}} \sqrt{F_N(t)}\right).\label{est:finalB1}
\end{equation}
\end{lem}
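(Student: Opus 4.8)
The plan is to work entirely on the Fourier side, using that $e^{\nu t\Delta}$ is the multiplier $e^{-\nu t|\xi|^2}$ and that the spectrum of $B_N$ is concentrated at the two points $(0,0,\pm N)$. Recalling \eqref{def:omega01-2}, namely $\omega_0^1 = N^2\phi B_N + N\Omega + \curl\Omega$ with $\Omega = \nabla\phi\wedge B_N$, I would first record that $\widehat{\omega_0^1}(\xi)$ is a vector-valued polynomial of degree $\le 2$ in $\xi$, whose coefficients are powers of $N$ up to $N^2$, multiplying the translated profiles $\widehat\phi(\xi_1,\xi_2,\xi_3\mp N)$ furnished by the four delta-contributions of $\widehat{B_N}$ in \eqref{DecB}. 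By the symmetry of the two spectral peaks it suffices to treat the $+N$ contribution, for which $|\widehat{\omega_0^1}(\xi)|\lesssim (N^2+N|\xi|+|\xi|^2)\,|\widehat\phi(\xi_1,\xi_2,\xi_3-N)|$. Thus the task reduces to estimating
\begin{equation*}
\|e^{\nu t\Delta}\omega_0^1\|_{H^r}^2 = \int_{\R^3}(1+|\xi|^2)^r\, e^{-2\nu t|\xi|^2}\,|\widehat{\omega_0^1}(\xi)|^2\,\de\xi .
\end{equation*}

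The key idea is to split the frequency space according to whether $\xi$ lies in the peak region $|\xi_3|\ge N/2$, where the bulk of the profile $\widehat\phi(\,\cdot\,,\xi_3-N)$ sits (its transverse width being $O(\sqrt\alpha)\ll N$), or in the low region $|\xi_3|\le N/2$, where $\widehat\phi$ is deep in its polynomial tail. On the peak region I would simply extract the Gaussian decay of the semigroup: since the mass concentrates at $\xi_3\approx N$, where $|\xi|^2\approx N^2$, the heat factor furnishes a decay of at least $e^{-\nu N^2 t/2}$, and bounding the remaining integral by the full Sobolev norm $\|\omega_0^1\|_{H^r}$, which by Proposition \ref{prop:sizeu01} (keeping track of the $\alpha$-dependence through \eqref{PhiNorm}) is $\lesssim \alpha^n N^{r+2}$, produces exactly the first term $\alpha^n N^{r+2}e^{-\nu\frac12 N^2 t}$.

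On the low region $|\xi_3|\le N/2$ the peak at $\xi_3=N$ lies at distance $\ge N/2$, so the polynomial decay of Proposition \ref{pro:decayphi} gives $|\widehat\phi(\xi_1,\xi_2,\xi_3-N)|\lesssim \alpha^n\,(\xi_1^2+\xi_2^2+(\xi_3-N)^2)^{-n/2}$ with $(\xi_3-N)^2\gtrsim N^2$. I would first integrate out the transverse variables $\xi_1,\xi_2$, using $\int_{\R^2}|\zeta|^{2m}(|\zeta|^2+a^2)^{-n}\,\de\zeta \sim a^{2m+2-2n}$ with $a\sim N$, so as to absorb the degree-two prefactor $(N^2+N|\xi|+|\xi|^2)^2$ and the Sobolev weight $(1+|\xi|^2)^r$ against the gain $N^{-n}$; what remains of the heat factor, after $e^{-2\nu t|\xi|^2}\le e^{-2\nu t\xi_3^2}$, is the one–dimensional integral $\int_{-N/2}^{N/2}e^{-2\nu t\xi_3^2}\,\de\xi_3 = N\,F_N(t)$ by the definition \eqref{Def:FN(t)}. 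Collecting the surviving powers of $N$ and taking a square root yields the second term $\alpha^n N^{-(n-r-4)}\sqrt{F_N(t)}$; the bound $|F_N(t)|\le 1$ is immediate from $e^{-2\nu t\xi_3^2}\le 1$.

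The main obstacle is the bookkeeping of the powers of $N$ in the low region: one must verify that the degree-two prefactor, the weight $(1+|\xi|^2)^r$, and the transverse integration together consume no more than $N^{r+4}$ of the decay $N^{-n}$ supplied by $\widehat\phi$, and that $n$ can be taken large enough in terms of $r$ (compatibly with the constraint $n\le C\alpha$ in Proposition \ref{pro:decayphi}) for every transverse integral to converge. Since $\alpha\ll N$, the factor $\alpha^n$ never competes with the gain $N^{-n}$, which is precisely what makes the second term genuinely small in the regime of interest.
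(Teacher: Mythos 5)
Your proposal is correct and follows essentially the same route as the paper's proof: both work on the Fourier side via the delta decomposition \eqref{DecB}, split the frequency domain at $|\xi_3|=N/2$ (the paper implements this by projecting $\phi$ and subdividing the complement into $A_1,A_2,A_3$, which amounts to your peak/tail dichotomy), extract $e^{-c\nu N^2 t}$ from the heat multiplier on the peak region, and on the low region combine Proposition \ref{pro:decayphi} with the transverse $(\xi_1,\xi_2)$ integration and the identity $\int_{-N/2}^{N/2}e^{-2\nu t\xi_3^2}\,\de\xi_3 = N F_N(t)$, with the power count $N^{r+3.5-n}\leq N^{-(n-r-4)}$ closing exactly as you indicate. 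The only blemish, which the paper shares, is the constant in the exponent (squaring and taking square roots as described actually yields $e^{-\nu N^2 t/4}$ rather than $e^{-\nu\frac12 N^2 t}$), and this is immaterial for every later use of the lemma.
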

\begin{proof}
We use the decomposition \eqref{DecB}. As before, the analysis of all the four contribution is the same (with obvious modification), so we will focus on 
$$
e^{\nu t\Delta}(\curl\curl(\phi B_{N}^{1})).
$$
We
define $P$ to be the following Fourier projection operator 
\begin{equation}
\widehat{P f}(\xi)=\chi_{\{|\xi_3|\leq N/2\}}\widehat{f}(\xi),
\end{equation}
as similarly done several times before.
We split the action of the heat kernel on $\curl\curl(\phi B_{N}^{1})$ in the following way 
$$
e^{\nu t\Delta}(\curl\curl(\phi B_{N}^{1})
=e^{\nu t\Delta}(\curl\curl((P\phi)B_{N}^{1}))+e^{\nu t\Delta}(\curl\curl((1-P\phi)B_{N}^{1})):=I+II.
$$
In order to estimate $I$ and $II$ we use the formula (hereafter $\mathcal{F}g := \widehat{g}$)
\begin{equation}
\mathcal{F}[\curl\curl v](\xi)=(i\xi)\wedge((i\xi)\wedge\widehat{v}(\xi)),\label{eq:fouriercurl}
\end{equation}
obtaining that
$$
I:=\mathcal{F}^{-1}\left[e^{-\nu t|\xi|^2}\mathcal{F}[\curl\curl((P\phi)B_{N}^{1})]\right]=-\mathcal{F}^{-1}\left[e^{-\nu t|\xi|^2}\xi\wedge\xi\wedge((\chi_{\{|\xi_3|\leq N/2\}}\widehat{\phi})*\widehat{B_{N}^{1}})\right].
$$
By Plancherel's Theorem we have that
\begin{align}
\|I\|_{H^r}^2&\lesssim\int_{\R^3}(1+|\xi|^2)^r|\xi|^4e^{-2\nu t|\xi|^2}|(\chi_{\{|\xi_3|\leq N/2\}}\widehat{\phi})*\widehat{B_{N}^{1}}|^2\de\xi\nonumber\\
&=\int_{A}(1+|\xi|^2)^r|\xi|^4e^{-2\nu t|\xi|^2}|\widehat{\phi}(\xi_1,\xi_2,\xi_3-N)|^2\de \xi,\label{def:Hs-I}
\end{align}
where the set $A$ is defined as
$$
A:=\{ (\xi_1,\xi_2,\xi_3)\in\R^3:|\xi_3-N|\leq N/2 \}.
$$
In particular, if $(\xi_1,\xi_2,\xi_3)\in A$, the third variable satisfies
\begin{equation}
\frac{N}{2}\leq \xi_3\leq \frac{3}{2}N,
\end{equation}
and since the exponential function in \eqref{def:Hs-I} is decreasing, proceeding as in the estimates in \eqref{norma-u0}, we obtain the bound
\begin{equation}\label{Combining1}
\|I\|_{H^r}^2\leq Ce^{-\nu N^2 t}\left(\|\phi\|_{H^{r+2}}^2+N^{2r}\|\phi\|_{H^2}^2+N^4\|\phi\|_{H^r}^2+N^{2(r+2)}\|\phi\|_{L^2}^2\right).
\end{equation}
We now analyze $II$: by its definition we have that
\begin{align}
\|II\|_{H^r}^2&\lesssim \int_{\R^3}(1+|\xi|^2)^r|\xi|^4e^{-2\nu t|\xi|^2}|(\chi_{\{|\xi_3|> N/2\}}\widehat{\phi})*\widehat{B_{N}^{1}}|^2\de\xi\nonumber\\
&=\int_{\R^3\setminus A}(1+|\xi|^2)^r|\xi|^4e^{-2\nu t|\xi|^2}|\widehat{\phi}(\xi_1,\xi_2,\xi_3-N)|^2\de \xi.\nonumber
\end{align}
We split the set $\R^3\setminus A=A_1\cup A_2\cup A_3$, where
$$
A_1:=\left\{\xi\in\R^3:\xi_3>\frac{3}{2}N\right\},\,\,
A_2:=\left\{\xi\in\R^3:-\frac{N}{2}<\xi_3<\frac{N}{2}\right\},\,\,
A_3:=\left\{\xi\in\R^3:\xi_3<-\frac{N}{2}\right\}.
$$
On the sets $A_1$ and $A_3$ we can actually proceed as done for $I$ obtaining that
\begin{equation}\label{Combining2}
\|II\|_{H^r(A_1\cup A_3)}^2\leq C e^{-\nu N^2 t}\left(\|\phi\|_{H^{r+2}}^2+N^{2r}\|\phi\|_{H^2}^2+N^4\|\phi\|_{H^{r+1}}^2+N^{2(r+2)}\|\phi\|_{L^2}^2\right).
\end{equation}
On the set $A_2$ we proceed in a different way. Recalling 
 the point-wise bound \eqref{est:point-psia} we get
\begin{align}\label{Combining3}
\|II\|_{H^r(A_2)}^2&\leq C\alpha^{2n}\iint_{\R^2}\int_{-\frac{N}{2}}^{\frac{N}{2}}\frac{(1+|\xi|^2)^r|\xi|^4}{(|\xi_1|^2+|\xi_2|^2+|\xi_3-N|^2)^n}e^{-2\nu t|\xi|^2}\de \xi\nonumber\\
&\leq C \alpha^{2n}\iint_{\R^2}\frac{(\xi_1^2+\xi_2^2+N^2)^{r+2}}{(\xi_1^2+\xi_2^2+N^2)^n} \de \xi_1\de\xi_2\int_{-\frac{N}{2}}^{\frac{N}{2}}e^{-2\nu t \xi_3^2}\de\xi_3\nonumber\\
&\leq C \frac{\alpha^{2n}}{N^{2(n-r-4)}} F_N(t),
\end{align}
provided that $n> r+4$ and $F_N(t)$ is defined as in \eqref{Def:FN(t)}.
In particular, note that by the dominated convergence theorem, the function $F_N$ satisfies 
$$
\lim_{t\to 0}F_N(t)=1,\qquad\lim_{t\to\infty}F_N(t)=0.
$$
Moreover, if we change variables, $F_N$ can be rewritten in the following form
$$
F_N(t)=\int_{-\frac{1}{2}}^{\frac{1}{2}}e^{-2\nu N^2 t \xi_3^2}\de \xi_3.
$$
Thus, if $n> r+4$ we also have that $\|II\|_{H^r(A_2)}^2$ can be made as small as we want for $N$ big enough. 
Combining \eqref{Combining1}-\eqref{Combining3} (and using 
to estimate the r.h.s. of \eqref{Combining1}-\eqref{Combining2})
we arrive to 
\begin{equation}
\|e^{\nu t\Delta}\curl\curl(\phi B_{N}^{1})\|_{H^r}^2\leq C \left(\alpha^{2n}N^{2(r+2)} e^{-\nu N^2 t}+ \frac{\alpha^{2(r+4)}}{N^{2(n-r-4)}} F_N(t)\right),
\end{equation}
that concludes the proof.
\end{proof}

\subsection{Construction of $u_0^2$}
We now continue with the construction of the second vector field. We define the vector field $W$ as follows:
\begin{equation}\label{def:WM}
W(x_1,x_2,x_3):=(\sin(x_2),\sin(x_3),\sin(x_1)).
\end{equation}
Notice that $W$ is not a Beltrami field, but it is an eigenvector of the double $\curl$ operator, i.e.
\begin{equation}\label{BoubleCurlEq1}
\curl\curl W= W.
\end{equation}
$W$ is divergence-free and we will see that it has at least one hyperbolic zero, which in particular is stable under regular perturbations. 

We now fix a target time $T>0$ and we define the Schwarz function
$\psi:\R^3\to\R$ as 
\begin{equation}\label{def:psi}
\psi(x_1,x_2,x_3)=e^{-\frac{|x|^2}{8\nu T}},
\end{equation}
whose Fourier transform is given by
$$
\widehat{\psi}(\xi)=8(\nu T)^{\frac{3}{2}}e^{-2\nu |\xi|^2T}.
$$

\begin{prop}\label{prop:punto critico}
Let $W$ and $\psi$ be defined as in \eqref{def:WM} and \eqref{def:psi} respectively. Then, $(0,0,0)$ is a hyperbolic zero of the vector field $\curl\curl(\psi W)$.
\end{prop}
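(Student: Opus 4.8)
The plan is to first exhibit the origin as a zero of $\curl\curl(\psi W)$ by a parity argument, and then to compute the Jacobian of $\curl\curl(\psi W)$ at the origin explicitly and verify that its spectrum avoids the imaginary axis. Since $\psi$ is even and $W$ is odd, the product $\psi W$ is an odd vector field; as $\curl\curl$ is a constant-coefficient second order operator it maps odd fields to odd fields, so $\curl\curl(\psi W)$ is odd and in particular vanishes at $x=0$. Thus $(0,0,0)$ is a zero, and it remains only to analyse $J:=\nabla\big[\curl\curl(\psi W)\big](0)$.

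Because $\curl\curl(\psi W)$ is odd, its Taylor expansion at the origin contains only odd-degree homogeneous parts, and $J$ is exactly the matrix of its linear part. Writing $\psi W=\sum_{k\ \mathrm{odd}}Q_k$ with $Q_k$ the degree-$k$ homogeneous component, and using that $\curl\curl$ lowers the homogeneity degree by $2$ (so $\curl\curl Q_1=0$), the linear part equals $\curl\curl Q_3$. Hence I would only need the degree-$3$ Taylor polynomial $Q_3$ of $\psi W$. Setting $a:=\tfrac{1}{8\nu T}$, from $\psi=1-a|x|^2+O(|x|^4)$ and $\sin s=s-\tfrac{s^3}{6}+O(s^5)$ one reads off
\[
Q_3=\Big(-\tfrac{x_2^3}{6}-a|x|^2x_2,\ -\tfrac{x_3^3}{6}-a|x|^2x_3,\ -\tfrac{x_1^3}{6}-a|x|^2x_1\Big).
\]

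Next I would apply the identity $\curl\curl=\nabla\dive-\Delta$ to $Q_3$. A direct computation gives $\dive Q_3=-2a(x_1x_2+x_2x_3+x_3x_1)$ and $\Delta Q_3=-(1+10a)(x_2,x_3,x_1)$, so that
\[
J=\begin{pmatrix}0 & 1+8a & -2a\\ -2a & 0 & 1+8a\\ 1+8a & -2a & 0\end{pmatrix}.
\]
As a sanity check, putting $a=0$ recovers the cyclic permutation $\nabla W(0)$, consistent with $\curl\curl W=W$. This matrix is circulant with symbol $(c_0,c_1,c_2)=(0,\,1+8a,\,-2a)$, so its eigenvalues are $\lambda_k=c_1\omega^k+c_2\omega^{2k}$ with $\omega=e^{2\pi i/3}$ and $k=0,1,2$. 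Explicitly $\lambda_0=1+6a>0$, while $\lambda_1,\lambda_2$ form a complex conjugate pair with real part $-\tfrac12-3a<0$. Since $a>0$, none of the eigenvalues lies on the imaginary axis, and $\det J=\lambda_0\,|\lambda_1|^2>0$ so $J$ is invertible; therefore the origin is a non-degenerate, hyperbolic zero of $\curl\curl(\psi W)$.

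I expect the only delicate point to be the bookkeeping in the $\curl\curl Q_3$ computation, namely keeping track of the contributions of $\nabla\dive$ and of $\Delta$ to each entry of $J$; by contrast the parity reduction to $\curl\curl Q_3$ and the circulant-eigenvalue step are clean and conceptual.
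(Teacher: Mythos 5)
Your proposal is correct and takes essentially the same approach as the paper: show the origin is a zero, compute the Jacobian of $\curl\curl(\psi W)$ there explicitly, and check that its spectrum (one positive real eigenvalue and a complex-conjugate pair with negative real part) avoids the imaginary axis. With $a=\tfrac{1}{8\nu T}$ your matrix $J$ coincides with the paper's matrix $\tfrac{1}{\nu T}\begin{pmatrix} 0 & 1+\nu T & -\frac14\\ -\frac14 & 0 & 1+\nu T\\ 1+\nu T & -\frac14 & 0 \end{pmatrix}$ and your eigenvalues $1+6a$ and $-\tfrac12-3a\pm i\,\tfrac{\sqrt3}{2}(1+10a)$ agree with the paper's $\lambda_1,\lambda_2,\lambda_3$; the only differences are cosmetic (your parity argument and cubic Taylor polynomial with $\curl\curl=\nabla\dive-\Delta$, versus the paper's product-rule expansion of $\curl\curl(\psi W)$ evaluated at $x=0$).
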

\begin{proof}
By definition we have that
$$
\curl\curl(\psi W)=\psi W +  \nabla\psi \wedge \curl W+(W\cdot\nabla)\nabla\psi-\Delta\psi W-(\nabla\psi\cdot\nabla)W.
$$
Thus, since $W(0)=0$, $\psi(0)=1$ and $\nabla\psi(0)=0$ we also have 
$$
\curl\curl(\psi W) \big|_{x=0} = 0. \qquad
$$
Moreover, a straightforward computation gives
\begin{equation}
\nabla(\curl\curl(\psi W) )|_{x=0}=\frac{1}{\nu T}
\begin{pmatrix}
0 & 1+\nu T & -\frac{1}{4}\\
-\frac{1}{4} & 0 & 1+\nu T\\
1+\nu T & -\frac{1}{4} & 0
\end{pmatrix}.
\end{equation}
Thus, for the Jacobian determinant we have that
\begin{equation}
\det \nabla(\curl\curl(\psi W) )|_{x=0}=\frac{1}{\nu T}\left[(1+\nu T)^3-\frac{1}{64}\right]\neq 0,
\end{equation}
and the eigenvalues are
\begin{align}
\lambda_1&=1+\frac{3}{4\nu T}\\
\lambda_2&=-\frac{(3+4\nu T)+(5+4\nu T)\sqrt{3}i}{8\nu T}\\
\lambda_3&=\bar\lambda_2.
\end{align}
Then, we can conclude that $x=0$ is a hyperbolic zero. 
\end{proof}

Let $T>0$ be given. We define our second reference vector field as follows
\begin{align}
u_0^2&:=e^{-\nu T\Delta}\curl(\psi W)\label{def:u02},\\
\omega_0^2 &:= \curl u_0^2 = e^{-\nu T\Delta}\curl\curl(\psi W)\label{def:omega2},
\end{align}
where in \eqref{def:omega2} we have simply used the fact that the heat operator $e^{-\nu T\Delta}$ commutes with the $\curl$. 
Moreover, it is worth to point out that the choice of $\psi$ as in \eqref{def:psi} has been made in order to have a control on the $H^r$ norm of $\omega_0^2$. Roughly speaking the inverse heat operator $e^{-T\Delta}$ grows exponentially at high frequencies but $\widehat{\psi}$ and $W$ are localized at low frequencies (of size $\sim 1$), thus we are able to obtain a control of the growth of the Sobolev norms. The following lemma holds.

\begin{lem}\label{lem:stima-omega02}
Let $T>0$ be given and let $\omega_0^2$ be as in \eqref{def:omega2}. Then, for every $r\geq 0$, there exists a constant $C:=C(T,r,\nu)>0$ such that
\begin{equation}\label{Rec2Size}
\|\omega_0^2\|_{H^r}\leq C(T,r,\nu)e^{C(1+\nu T)}.
\end{equation}
Moreover, $(0,0,0)$ is a hyperbolic zero of the vector field $e^{\nu T\Delta}\omega_0^2$.
\end{lem}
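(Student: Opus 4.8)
The plan is to treat the two assertions separately, starting from the elementary observation that $e^{\nu T\Delta}$ and $e^{-\nu T\Delta}$ are inverse Fourier multipliers. For the hyperbolic zero I would simply note that, by \eqref{def:omega2},
$$
e^{\nu T\Delta}\omega_0^2 = e^{\nu T\Delta}e^{-\nu T\Delta}\curl\curl(\psi W) = \curl\curl(\psi W),
$$
so the claim is exactly the content of Proposition \ref{prop:punto critico}; no further work is needed here.

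For the Sobolev bound the whole point is that the exponential growth $e^{\nu T|\xi|^2}$ coming from the inverse heat operator $e^{-\nu T\Delta}$ is defeated by the Gaussian decay of $\widehat{\psi}$. I would pass to the Fourier side and write, using \eqref{eq:fouriercurl}, $\widehat{\omega_0^2}(\xi) = -e^{\nu T|\xi|^2}\,\xi\wedge(\xi\wedge\widehat{\psi W}(\xi))$, so that $|\widehat{\omega_0^2}(\xi)|\lesssim |\xi|^2 e^{\nu T|\xi|^2}|\widehat{\psi W}(\xi)|$. Next I would compute $\widehat{W}$: since each component of $W$ in \eqref{def:WM} is a single sine of frequency one, $\widehat{W}$ is a finite sum of Dirac masses supported at the six unit vectors $\pm e_1,\pm e_2,\pm e_3$. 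Convolving with $\widehat{\psi}(\xi)=8(\nu T)^{3/2}e^{-2\nu T|\xi|^2}$ from \eqref{def:psi} gives $|\widehat{\psi W}(\xi)|\lesssim (\nu T)^{3/2}\sum_{|e|=1} e^{-2\nu T|\xi-e|^2}$, a finite sum of translated Gaussians centered on the unit sphere.

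The key computation is completing the square. For each unit vector $e$, using $|e|^2=1$,
$$
2\nu T|\xi|^2 - 4\nu T|\xi-e|^2 = 2\nu T\bigl(-|\xi-2e|^2 + 2\bigr) = -2\nu T|\xi-2e|^2 + 4\nu T,
$$
so that $e^{2\nu T|\xi|^2}e^{-4\nu T|\xi-e|^2} = e^{4\nu T}e^{-2\nu T|\xi-2e|^2}$: the exponential growth is exactly converted into genuine Gaussian decay (centered now at $2e$) at the cost of a harmless factor $e^{4\nu T}$. This is precisely why $\psi$ was tuned so that $\widehat{\psi}$ decays at rate $2\nu T$, i.e.\ twice the growth rate $\nu T$. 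Squaring the bound for $|\widehat{\omega_0^2}|$, using $(\sum_e a_e)^2\lesssim \sum_e a_e^2$ over the finitely many $e$, and inserting the identity above yields
$$
\|\omega_0^2\|_{H^r}^2 \lesssim (\nu T)^3 e^{4\nu T}\sum_{|e|=1}\int_{\R^3}(1+|\xi|^2)^r|\xi|^4 e^{-2\nu T|\xi-2e|^2}\,\de\xi.
$$
Each integral is a convergent Gaussian moment (after the shift $\eta=\xi-2e$) whose value is a finite constant depending only on $r$, $\nu$ and $T$. Collecting the factors gives $\|\omega_0^2\|_{H^r}\leq C(T,r,\nu)e^{C(1+\nu T)}$, as claimed.

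The only genuine obstacle is the balance in the exponents: one must verify that the decay rate of $\widehat\psi$ strictly exceeds the growth rate $\nu T$ of $e^{-\nu T\Delta}$, otherwise the integral would diverge. The completing-the-square identity makes this quantitative and shows the margin is exactly a factor of two, which is built into the choice of the constant $8\nu T$ in \eqref{def:psi}; everything else is a routine Gaussian estimate.
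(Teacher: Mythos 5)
Your proposal is correct and follows essentially the same route as the paper: pass to the Fourier side, use that $\widehat{W}$ is a sum of Dirac masses at unit distance from the origin so that $\widehat{\psi W}$ is a sum of translated Gaussians, and complete the square so that the growth $e^{2\nu T|\xi|^2}$ is converted into Gaussian decay centered at $2e$ at the cost of the factor $e^{4\nu T}$, while the hyperbolic zero follows, exactly as in the paper, from $e^{\nu T\Delta}\omega_0^2=\curl\curl(\psi W)$ and Proposition \ref{prop:punto critico}. The only cosmetic difference is that you complete the square vectorially for an arbitrary unit vector $e$, whereas the paper does it coordinate-wise for one component and invokes symmetry for the others.
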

\begin{proof}
The core of the argument is that its Fourier transform of $W$ is given by 2 deltas centered in $0$ times a delta 
centered in $1$, for example
\begin{equation}\label{eq:fouriercurl2}
2i \left(\widehat{W}(\xi_1,\xi_2,\xi_3)\right)_1=\delta(\xi_1)\otimes\delta(\xi_2-1)\otimes\delta(\xi_3)
- \delta(\xi_1)\otimes\delta(\xi_2+1)\otimes\delta(\xi_3).
\end{equation} 
By using \eqref{eq:fouriercurl}, \eqref{eq:fouriercurl2} and the symmetry 
$\xi_2 \leftrightarrow - \xi_2$ we can estimate 
\begin{align}
\|(\omega_0^2)_1\|_{H^r}^2 &\leq\int_{\R^3}e^{2\nu T|\xi|^2}|\xi|^4(1+|\xi|^2)^r|\widehat{\psi}*\widehat{W}|^2(\xi)\de \xi\nonumber\\
&= \int_{\R^3}e^{2\nu T|\xi|^2}|\xi|^4(1+|\xi|^2)^r|\widehat{\psi}(\xi_1,\xi_2-1,\xi_3)|^2\de \xi\nonumber\\
&=8(\nu T)^{\frac{3}{2}}\int_{\R^3}e^{2\nu T|\xi|^2}|\xi|^4(1+|\xi|^2)^r e^{-4\nu\xi_1^2T}e^{-4\nu|\xi_2-1|^2T}e^{-4\nu\xi_3^2T}\de \xi\nonumber\\
&=8(\nu T)^{\frac{3}{2}} e^{4\nu T}\int_{\R^3}|\xi|^4(1+|\xi|^2)^r e^{-2\nu\xi_1^2T}e^{-2\nu|\xi_2-2|^2T}e^{-2\nu\xi_3^2T}\de \xi,\label{eq:inv-heat-omega2}
\end{align}
where in the last line we have used that 
$$
2 \xi_2^2 -4|\xi_2-1|^2 = 4 - 2|\xi_2-2|^2.
$$
Note that the integral in \eqref{eq:inv-heat-omega2} is bounded by an absolute constant. Then, by symmetry, it is immediate to check that
\begin{equation}
\|\omega_0^2\|_{H^r}\leq C(T,\nu)e^{C(1+\nu T)},
\end{equation}
for some large constant $C$ that only depends on $r$ and $\nu$. 

Finally, recalling the definition \eqref{def:omega2}, if we apply the heat operator to $\omega_0^2$ we get that 
$$
e^{\nu T\Delta}\omega_0^2=\curl\curl(\psi W),
$$
and the conclusion follows from Proposition \ref{prop:punto critico}.
\end{proof}

\subsection{Proof of Theorem 2}
We can now prove our second main theorem. We enunciate it again below for the reader's convenience.
\begin{mainthmdue*}
Given any constants $\nu$, $T>0$, there exists a (small) smooth divergence-free vector field $u_0:\R^3\to\R^3$ such that \eqref{eq:ns} admits a unique global smooth solution $u$ with initial datum $u_0$, such that the vortex lines at time $t=0$ and $t=T$ are not topologically equivalent, meaning that there is no homeomorphism of $\R^3$ into itself mapping the vortex lines of $u(0,\cdot)$ into that of $u(T,\cdot)$.
\end{mainthmdue*}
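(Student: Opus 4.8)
The plan is to take $u_0 := \rho(u_0^1 + u_0^2)$ with $u_0^1,u_0^2$ as constructed above, and to fix the three free parameters in the order $\alpha$, then $N$, then $\rho$. First I would pick $\alpha\geq 1$ and impose $N\gg\alpha$ as required by the preceding Propositions. Since $u_0^1$ and $u_0^2$ belong to $H^r$ for every $r$ (Proposition \ref{prop:sizeu01} and Lemma \ref{lem:stima-omega02}), so does $u_0$, with $\|u_0\|_{H^{1/2}}\lesssim\rho\,C(N,T,\nu)$. Choosing $\rho$ small enough (depending on $N,T,\nu$) to guarantee $\|u_0\|_{H^{1/2}}\leq C\nu$, Theorem \ref{lem:stima-ns} yields a unique global solution $u\in C(\R^+;H^r)$ for every $r$; spatial smoothness follows from Sobolev embedding and smoothness in time from the equation, exactly as in the proof of \textbf{Theorem 1}. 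Since the hypotheses of \textbf{Theorem 2} hold, the quantitative bound \eqref{eq:stima Hs quantitativa}, $\|u(t,\cdot)\|_{H^r}\leq C_r\|u_0\|_{H^r}$, is available throughout.

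Next I would establish the two topological end-states for $\omega=\curl u$. \emph{No zeros at $t=0$:} as $\omega_0=\rho(\omega_0^1+\omega_0^2)$ with $\rho>0$, the zeros of $\omega_0$ coincide with those of $\omega_0^1+\omega_0^2$. By \eqref{est:lb-omega1} we have $|\omega_0^1(x)|\geq cN^2(1+|x|^2)^{-\alpha}$, whereas $\omega_0^2=e^{-\nu T\Delta}\curl\curl(\psi W)$ is a Schwartz field whose size is $O(1)$ uniformly in $N$ (Lemma \ref{lem:stima-omega02}); choosing $m>\alpha$ gives $|\omega_0^2(x)|\leq C_m(1+|x|^2)^{-\alpha}$, so for $N$ large $|\omega_0^1|>|\omega_0^2|$ pointwise and $\omega_0$ has no zero. \emph{A hyperbolic zero at $t=T$:} taking the $\curl$ of the Duhamel formula (using that $\curl$ commutes with $e^{\nu t\Delta}$ and that $\curl\,\mathbb{P}=\curl$) I would write $\omega(T,\cdot)=\rho\,e^{\nu T\Delta}(\omega_0^1+\omega_0^2)+E$, where $E$ is the $\curl$ of the nonlinear integral term. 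Since $e^{\nu T\Delta}\omega_0^2=\curl\curl(\psi W)$ has a hyperbolic zero at the origin (Proposition \ref{prop:punto critico}), dividing by $\rho$ gives
\begin{equation*}
\frac{1}{\rho}\,\omega(T,\cdot)=\curl\curl(\psi W)+e^{\nu T\Delta}\omega_0^1+\frac{1}{\rho}E.
\end{equation*}
By Lemma \ref{lem:heat-omega01} the $H^r$ norm of $e^{\nu T\Delta}\omega_0^1$ tends to $0$ as $N\to\infty$ (the factor $e^{-\nu N^2 T/2}$ beats the polynomial prefactor, the second term carries $N^{-(n-r-4)}$ which is small for $n$ large, and $F_N(T)\leq 1$), while \eqref{eq:stima Hs quantitativa} together with the algebra property of $H^r$ gives $\|E\|_{H^{r-1}}\lesssim\rho^2 C(N,T,r)$, hence $\|\rho^{-1}E\|_{H^{r-1}}\lesssim\rho\,C(N,T,r)\to0$ as $\rho\to0$ with $N$ fixed.

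Choosing $r$ large (so that $r-1>5/2$) and using $H^{r-1}\hookrightarrow C^1$, both corrections are small in $C^1$ near the origin once $N$ is large and then $\rho$ is small. Since hyperbolic zeros are stable under $C^1$ perturbations (Section \ref{Sec:Notations}), $\rho^{-1}\omega(T,\cdot)$, and therefore $\omega(T,\cdot)$, has a hyperbolic zero near $0$. Finally I would convert this into the topological statement: the presence of a zero of the vorticity is a robust invariant of the vortex-line configuration, because near a hyperbolic zero the line field has a singular local model (the zero is a constant integral curve surrounded by a saddle/focus structure) that cannot occur for a nowhere-vanishing field, whose lines locally foliate as a trivial bundle by the flow-box theorem. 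A homeomorphism of $\R^3$ carrying the vortex lines of $u(0,\cdot)$ onto those of $u(T,\cdot)$ would have to preserve such singular points; since $\omega(0,\cdot)$ has none and $\omega(T,\cdot)$ has one, no such homeomorphism exists, which is precisely vortex reconnection.

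The main obstacle I anticipate is the joint bookkeeping of the three parameters, and specifically reconciling the $C^1$-smallness of $\rho^{-1}E$ with the largeness of $N$ needed elsewhere: the constant $C(N,T,r)$ grows with $N$ through $\|u_0^1\|_{H^r}\sim N^{r+1}$, so one must first send $N\to\infty$ to annihilate $e^{\nu T\Delta}\omega_0^1$ and to secure the no-zero condition at $t=0$, and only afterwards let $\rho\to0$ to absorb the $N$-dependent constant. One also has to ensure that the neighbourhood of the origin on which the hyperbolic zero is detected is stable under both perturbations simultaneously, but this is exactly what the implicit-function-theorem argument behind the persistence of hyperbolic zeros provides.
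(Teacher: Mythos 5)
Your proposal is correct and follows essentially the same route as the paper's proof: the same datum $u_0=\rho(u_0^1+u_0^2)$, global existence and the a priori bound \eqref{eq:stima Hs quantitativa} via smallness, the lower bound \eqref{est:lb-omega1} against the pointwise decay of $\omega_0^2$ to exclude zeros at $t=0$, the rescaled Duhamel identity at $t=T$ with $e^{\nu T\Delta}\omega_0^2=\curl\curl(\psi W)$, Lemma \ref{lem:heat-omega01} to kill $e^{\nu T\Delta}\omega_0^1$, the quadratic bound on the nonlinear term, and $C^1$-stability of the hyperbolic zero. The only (inessential) difference is the bookkeeping you flag at the end: you fix $N$ large and then send $\rho\to 0$ to absorb the $N$-dependent constant in the nonlinear term, whereas the paper couples the parameters as $\rho=N^{-\beta}$ with $\beta>2r+5$ so that a single limit $N\to\infty$ handles all terms at once; both resolutions are valid since the proof of Lemma \ref{stima norme ns} only requires smallness of the low-order norms of $u_0$.
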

\begin{proof}
Let $T>0$ be given and define the vector fields $u_0^1, u_0^2$ as follows
\begin{equation}
u_0^1=\curl(\phi B_{N}),
\end{equation}
\begin{equation}
u_0^2= e^{-\nu T\Delta}\curl(\psi W).
\end{equation}
The initial datum of \eqref{eq:ns} will be the vector field $u_0$ defined as
\begin{equation}\label{def:u_0}
u_0=\rho(u_0^1+ u_0^2),
\end{equation}
where $\rho$ and $N$ are a small and large parameter, respectively, to be chosen such that:
\begin{itemize}
\item[$(i)$] the equations \eqref{eq:ns} admit a unique global smooth solution;
\item[$(ii)$] at time $t=0$, $\omega_0^2 := \curl u_0^2$ is a small perturbation of 
$\omega_0^1 := \curl u_0^1$;
\item[$(iii)$] at time $t=T$, the rescaled vector field $\frac{\omega(T,\cdot)}{\rho}$is a small perturbation of $e^{T\Delta}\omega_0^2:=\curl\curl(\psi W)$.
\end{itemize}
Moreover note that, since the heat operator preserves the divergence-free condition, $\dive u_0=0$.\\
\\
\underline{\em Step 1} \,\,Size of the initial datum.\\
\\
First of all, we compute the $H^r$ norm of $u_0$ and $\nabla u_0$. Since
$$
\|u_0^2\|_{H^r}\sim \|\omega_0^2\|_{H^{r-1}},
$$
we use Proposition \ref{prop:sizeu01} 
and Lemma \ref{lem:stima-omega02} to obtain
\begin{equation}\label{eq:InitiHr}
\| u_0\|_{H^r}\leq
\rho C(T,r,\nu, \alpha) \left(N^{r+1} +  e^{C (1+\nu T)}  \right) \leq \rho  C(T,r,\nu, \alpha)  N^{r+1}  \ll C_r \nu,
\end{equation}
taking $N$ sufficiently large in the second inequality and $\rho$ sufficiently small in the last one. In this way, Theorem \ref{lem:stima-ns} ensures the global well-posedness of \eqref{eq:ns}. Moreover, if we define $\rho = N^{-\beta}$ with $\beta > 2r + 5$ also have from Lemma \ref{stima norme ns} the following {\it a priori} estimate
\begin{equation}\label{GlobalControl}
\|u(t,\cdot)\|_{H^r} \leq C_r\|u_0\|_{H^r} \leq \rho C  N^{r+1},
\end{equation}
where we have abbreviated $C(T,r,\nu, \alpha)$ to $C$.
We recall that the value of $r$ has to be chosen to be large enough, in particular in such a way that the $H^{r-1}$ norm dominates the $L^{\infty}$ norm. In fact, we will fix 
$r> 5/2$ (say $r=3$). The reason of these choices will be clarified in the next steps, however
this choice of $\rho$ is clearly enough for \eqref{eq:InitiHr} and \eqref{GlobalControl} to be true.  
\\
\\
\underline{\em Step 2}\,\,\, Integral lines at time $0$.\\
\\
By the definition of $u_0$, the initial vorticity $\omega_0$ is given by
\begin{equation}\label{VorticityFinalDef}
\omega_0=\rho(\omega_0^1+ \omega_0^2).
\end{equation}
Consider the rescaled datum $\tilde{\omega}_0=\rho^{-1}\omega_0$: we have that (recall the estimate \eqref{Rec2Size})
\begin{equation}\label{REscDataumEstimate}
\|\tilde \omega_0 - \omega_0^1\|_{H^r} =  \| \omega_0^2 \|_{H^r} \leq  C e^{ C (1+ \nu T)}  .
\end{equation}
Using this fact it is easy to show that, after choosing $N= N(\nu, T)$ sufficiently large, the vector field $\tilde{\omega}_0$, and so $\omega_0$, does not have zeros.
We prove this last assertion: let $R>0$, for $|x|\leq R$ one has that
\begin{align*}
|\tilde\omega_0|&\geq |\omega_0^1|-|\tilde\omega_0- \omega_0^1|\\
&\geq \frac{C N^2}{(1+|x|^2)^\alpha} -  e^{C (1+ \nu T)}\\
&\geq \frac{C N^2}{R^{2\alpha}} -   e^{C (1+\nu T)}>0,
\end{align*}
for $N = N(R, \nu, T)$ big enough; the second inequality follows by \eqref{REscDataumEstimate} and Sobolev embedding. On the other hand, if $|x|>R$ and $R$ is sufficiently large (depending only on $\nu,T$), 
we can use that $\psi$ decays faster than $\phi$, show that
\begin{align*}
|\tilde\omega_0|&\geq |\omega_0^1| - |\omega_0^2(x)|\\
& \geq \frac{C N^2}{(1+|x|^2)^\alpha} -   |\omega_0^2(x)| 
>  \frac{1}{(1+|x|^2)^\alpha}, \qquad |x| >R.
\end{align*}
In this inequality we simply used the pointwise estimate 
\begin{equation}\label{ExpBeatPoly}
|\omega_0^2(x)| \lesssim \frac{1}{(1+|x|^2)^\alpha}, \qquad |x| >R, 
\end{equation}
that is valid as long as $R$ is sufficiently large (depending only on $\nu,T$). 
Indeed, assuming that $|x_1| \gtrsim |x|$, integrating by parts $k$-times in the $x_1$ direction and 
proceeding as in \eqref{eq:inv-heat-omega2} we can show that
\begin{align}
|(\omega_0^2)_1 (x)| &
\leq \frac{1}{ |x_1|^k} \int_{\R^3}e^{2\nu T|\xi|^2}
| \partial_{\xi_1}^k \mathcal{F}(\curl \curl (\psi W))|(\xi)\de \xi\nonumber\\
&
\leq \frac{1}{|x_1|^k} \int_{\R^3}e^{2\nu T|\xi|^2}
|\xi|^{k+2} |  ( \widehat{\psi}*\widehat{W})|(\xi)\de \xi\nonumber\\
&\leq \frac{C}{|x_1|^k}\int_{\R^3}e^{2\nu T|\xi|^2}|\xi|^{k+2}\widehat{\psi}(\xi_1,\xi_2-1,\xi_3)|^2\de \xi\nonumber\\
&= \frac{C (\nu T)^{3/2}}{|x_1|^k}\int_{\R^3}e^{2\nu T|\xi|^2}|\xi|^{k+2} e^{-4\nu \xi_1^2T}e^{-4\nu|\xi_2-1|^2T}e^{-4\nu \xi_3^2T}\de \xi\nonumber\\
&=\frac{C(\nu T)^{3/2}}{|x_1|^k} e^{4 \nu T}\int_{\R^3}|\xi|^{k+2} e^{-2\nu\xi_1^2T}e^{-2\nu|\xi_2-2|^2T}e^{-2\nu\xi_3^2T}\de \xi,\label{stima-omega02}
\end{align}
and the modulus of the right hand side satisfies the inequality \eqref{ExpBeatPoly}
as long as $k > 2\alpha$ and $R = R(\nu,T) \gg 1$. If we are in the case $|x_j| \gtrsim |x|$ for $j =2$ or $j=3$ 
the same argument, integrating w.r.t. $x_j$ gives the desired result. Moreover, an analogous argument
works for the other components of $\omega_0^2$.\\
\\
\underline{\em Step 3}\,\,\, Evolution under the Navier-Stokes flow.\\
\\
Let $(u,p)$ be the unique strong solution starting from $u_0$ defined in Step 1 and denote with $\omega=\curl u$ its vorticity. By Duhamel's formula $\omega$ satisfies the following identity
\begin{equation}
\omega(t,\cdot)=e^{\nu t\Delta}\omega_0+D(t,\cdot),
\end{equation}
where $D(t,\cdot)$ is defined as
\begin{equation}
D(t,\cdot):=\int_0^t e^{\nu(t-s)\Delta}\dive(\omega(s,\cdot)\otimes u(s,\cdot)-u(s,\cdot)\otimes\omega(s,\cdot))\de s.
\end{equation}
By \eqref{GlobalControl} we have
\begin{equation}
\|u(t,\cdot)\|_{H^r} \leq \rho C N^{r+1},\qquad \|\omega(t,\cdot)\|_{H^r} \leq \rho C N^{r+2}.
\end{equation}
Thus, for the operator $D$ we have that
\begin{equation}\label{GlobalQuadDuhamControl}
\|D(t,\cdot)\|_{H^r} \leq t  \, \| \omega_0\|_{H^{r+1}} \| u_0\|_{H^{r+1}} \leq C t   \rho^2  N^{2r+5}, \qquad t \geq 0,   
\end{equation}
where we used the properties of the heat kernel and the fact that $H^r$ is an algebra for big values of $r$. The constant $C$ here depends on
$T,r,\nu, \alpha$. 
\\ 
\\
\underline{\em Step 4} \,\,\, Choice of the parameters.\\
\\
We consider the solution at time $t=T$. We rescale the vorticity as
$$
\tilde{\omega}(T,\cdot)=\frac{\omega(T,\cdot)}{\rho},
$$
and from the Duhamel's formula we get that
\begin{equation}\label{FinalFinalmente}
\tilde{\omega}(T,\cdot)-e^{\nu T\Delta} \omega_0^2 = e^{\nu T\Delta}\omega_1+\frac{D(T,\cdot)}{\rho}.
\end{equation}
Recall that 
$$e^{\nu T\Delta} \omega_0^2=\curl\curl(\psi W),$$
has a hyperbolic zero at $x=0$. This implies that if \eqref{FinalFinalmente} is small enough in the $C^1$ norm, the vector field $\omega(T, \cdot)$ has also a hyperbolic zero close to $x=0$ (this just follows applying the implicit function theorem). Thus reconnection must have happened at some time between $t=0$ (where $\omega_0$ has no zeros) and $t=T$ (where $\omega(T, \cdot)$ has at least one hyperbolic zero). 
 
Thus, in order to conclude the proof we are going to show that the $C^1$ norm of \eqref{FinalFinalmente} is small. We will control it with the $H^r$ norm (before we fixed $r=3$, that is enough for this).
By using \eqref{est:finalB1} with $n=2r$ and \eqref{GlobalQuadDuhamControl} we have 
\begin{equation}\label{est:choice-parameters}
\|\tilde{\omega}(T,\cdot)-e^{\nu T\Delta} \omega_0^2\|_{H^r}
\leq 
C(\alpha, T, \nu, r) \left( N^{r+3} e^{-\nu \frac12 N^2 T}
+ \frac{1}{N^{r-4}}  + \rho N^{2r+5} \right).
\end{equation}
Note that, in Step 1 
we have chosen 
$\rho = N^{-\beta}$ with $\beta > 2r + 5$ (and say $r=3$). Thus, 
if we take $N$ sufficiently large with respect to all the other parameters \big(in particular $N \gg \frac{1}{\sqrt{\nu T}}$\big), the right hand side of \eqref{est:choice-parameters} is arbitrarily small, so that the proof is complete. 
\end{proof}

\section{Stability for small times}
In this section we discuss the stability with respect to small times of the reconnection scenario of {\bf Theorem 2}. 
Our first goal is to obtain point-wise decay estimate (in space) of the vorticity $\omega$ associated to the solution $u$ constructed in our {\bf Theorem 2}. These, together with a suitable lower bound on the linear behavior of the vorticity, will allow us to show that the vorticity has no zeros for small times. We need some preliminary lemmas.

\begin{lem}\label{stime decadimento calore}
Let $\alpha \geq 1$ and $\phi$ be the function defined in \eqref{Rec1Size}. Recall that 
$\omega_0=\rho(\omega_0^1+ \omega_0^2)$ with $\omega_0^1$ and
$\omega_0^2$ defined in \eqref{def:omega01}, \eqref{def:omega2}, respectively. 
Then, for $N \geq \alpha$, we have that
\begin{equation}
|e^{\nu t\Delta}\omega_0(x)|\lesssim \rho \frac{N^2  (1 + (\nu t)^{\alpha})}{(1+|x|^2)^{\alpha}}.
\end{equation}
\end{lem}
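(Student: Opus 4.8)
The plan is to reduce the statement to a single convolution estimate against the Gaussian. Writing $s = \nu t$ and denoting by $G_s(z) := (4\pi s)^{-3/2} e^{-|z|^2/(4s)}$ the kernel of $e^{\nu t \Delta}$, the heat semigroup acts by convolution with the \emph{positive} kernel $G_s$, so any pointwise majorant of the data is transported through the flow:
\[
|e^{\nu t\Delta}\omega_0(x)| \le \int_{\R^3} G_s(x-y)\,|\omega_0(y)|\,\de y.
\]
Hence it suffices to carry out two steps: (i) produce the pointwise bound $|\omega_0(x)| \lesssim \rho\, N^2 (1+|x|^2)^{-\alpha}$ on the initial vorticity, and (ii) show that Gaussian convolution preserves this polynomial decay at the cost of the announced time factor, i.e.
\[
\int_{\R^3} G_s(x-y)\,(1+|y|^2)^{-\alpha}\,\de y \lesssim_\alpha (1+s^\alpha)\,(1+|x|^2)^{-\alpha}.
\]

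The heart of the matter, and the step I expect to require the most care, is the second estimate; I would prove it cleanly via the Peetre inequality $(1+|x|^2)^\alpha \le 2^\alpha (1+|x-y|^2)^\alpha (1+|y|^2)^\alpha$. Multiplying the convolution by $(1+|x|^2)^\alpha$ and inserting this bound cancels the factor $(1+|y|^2)^{-\alpha}$ and leaves, after the change of variables $z = x-y$,
\[
(1+|x|^2)^\alpha \int_{\R^3} G_s(x-y)(1+|y|^2)^{-\alpha}\,\de y \;\lesssim_\alpha\; \int_{\R^3} G_s(z)(1+|z|^2)^\alpha\, \de z.
\]
The remaining integral is a pure Gaussian moment: the rescaling $z = \sqrt{s}\,w$ turns it into $(4\pi)^{-3/2}\int_{\R^3} e^{-|w|^2/4}(1+s|w|^2)^\alpha\, \de w$, which by the elementary bound $(1+s|w|^2)^\alpha \lesssim_\alpha 1 + s^\alpha |w|^{2\alpha}$ and convergence of the Gaussian moments is controlled by $C_\alpha(1+s^\alpha)$. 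This produces precisely the factor $1+(\nu t)^\alpha$; everything else in the argument is bookkeeping.

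It then remains to verify the pointwise majorant of step (i). For $\omega_0^1$ I would use the explicit expansion \eqref{def:omega01}: every term is a product of a derivative of $\phi$ (all of which satisfy $|\nabla^k \phi(x)| \lesssim_{k,\alpha} (1+|x|^2)^{-\alpha}$, since differentiating the profile $(1+|x|^2)^{-\alpha}$ only improves the spatial decay) with a derivative of $B_N$, and each derivative of $B_N$ costs a factor $N$ while $\|B_N\|_{L^\infty}=1$. The dominant term $N^2 \phi B_N$ contributes the size $N^2(1+|x|^2)^{-\alpha}$, and for $N \ge \alpha$ the lower-order terms (carrying at most one power of $N$, times an $\alpha$-dependent constant) are absorbed, giving $|\omega_0^1(x)| \lesssim_\alpha N^2 (1+|x|^2)^{-\alpha}$. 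For $\omega_0^2$, which does not depend on $N$ at all, I would combine the tail estimate \eqref{ExpBeatPoly} valid for $|x|>R$ with the trivial boundedness on the compact region $|x|\le R$ to get $|\omega_0^2(x)| \lesssim_{\nu,T,\alpha} (1+|x|^2)^{-\alpha} \le N^2(1+|x|^2)^{-\alpha}$. Summing the two contributions and multiplying by $\rho$ yields the majorant of step (i), and combining it with step (ii) closes the proof.
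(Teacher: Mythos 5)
Your proof is correct, and for the dominant piece $\omega_0^1$ it is essentially the paper's argument: the paper also writes the heat evolution as a Gaussian convolution, uses the Peetre-type bound $\frac{(1+|x|^2)^{\alpha}}{(1+|x-z|^2)^{\alpha}} \lesssim (1+|z|^2)^{\alpha}$, and controls the resulting Gaussian moment by $1+(\nu t)^{\alpha}$; your only cosmetic difference there is that you first assemble the pointwise majorant $|\omega_0^1| \lesssim N^2 (1+|x|^2)^{-\alpha}$ (absorbing the terms of $\Gamma = N\nabla\phi\wedge B_N + \dots$ using $N \geq \alpha$) and then apply one abstract convolution lemma, whereas the paper runs the convolution estimate on $N^2\phi B_N$ and on $\Gamma$ separately. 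Where you genuinely diverge is the treatment of $\omega_0^2$: the paper does \emph{not} use a pointwise majorant at time zero, but instead exploits the identity $e^{\nu t\Delta}\omega_0^2 = e^{-\nu(T-t)\Delta}\curl\curl(\psi W)$ and re-runs the Fourier-side estimates of Lemma \ref{lem:stima-omega02} (for $|x|\le 1$) and of \eqref{stima-omega02} (for $|x|>1$) with $T$ replaced by $T-t$; you instead recycle the already-established time-zero decay \eqref{ExpBeatPoly} together with the $L^\infty$ bound on the compact region, and feed $\omega_0^2$ through the same Gaussian convolution step. Both are valid (\eqref{ExpBeatPoly} is proved before this lemma, so there is no circularity), and your route is more economical since it avoids repeating the Fourier computations at each time $t$; what the paper's route buys is a sharper, $t$-uniform bound $C(T,\nu)(1+|x|^2)^{-\alpha}$ on the $\omega_0^2$ piece — with decay of arbitrarily high order — while yours carries the harmless extra factor $1+(\nu t)^{\alpha}$, which is still within the claimed estimate and suffices for the later application on times $t \le \nu^{-1}$.
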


\begin{proof}
We first prove 
\begin{equation}
|e^{\nu t\Delta}\omega_0^1(x)|\lesssim \frac{N^2  (1 + (\nu t)^{\alpha})}{(1+|x|^2)^{\alpha}}.
\end{equation}
Letting 
$$
\Gamma (x)
:= 
N\nabla\phi \wedge B_{N}+(B_{N}\cdot\nabla)\nabla\phi-\Delta\phi B_{N}-(\nabla\phi\cdot\nabla)B_{N},
$$ 
we have (see \eqref{def:omega01})
$$
\omega_0^1 = N^2\phi B_{N} + \Gamma.
$$
We use the definition of heat kernel and the fact that $|B_N(x)|=1$ to get that
\begin{align}
|e^{\nu t\Delta}\omega_0^1(x)|&=\frac{1}{(4\pi\nu t)^{3/2}}\left|\int_{\R^3}e^{-\frac{|x-y|^2}{4\nu t}}\omega_0^1(y) \de y\right|\nonumber\\
&\leq \frac{N^2}{(4\pi\nu t)^{3/2}}\int_{\R^3}e^{-\frac{|x-y|^2}{4\nu t}}|\phi(y)|\de y\label{stima decadimento 1}\\
&+\frac{1}{(4\pi\nu t)^{3/2}}\int_{\R^3}e^{-\frac{|x-y|^2}{4\nu t}}|\Gamma(y)|\de y\label{stima decadimento 2}.
\end{align}
We start by analyzing \ref{stima decadimento 1}: we use the definition of $\phi$ to write
\begin{align*}
\frac{N^2}{(4\pi\nu t)^{3/2}}\int_{\R^3}e^{-\frac{|x-y|^2}{4\nu t}}|\phi(y)|\de y&\leq \frac{N^2}{(4\pi\nu t)^{3/2}}\int_{\R^3}e^{-\frac{|x-y|^2}{4\nu t}}\frac{1}{(1+|y|^2)^{\alpha}}\de y\\
&=\frac{N^2}{(4\pi\nu t)^{3/2}}\int_{\R^3}e^{-\frac{|z|^2}{4\nu t}}\frac{1}{(1+|x-z|^2)^{\alpha}}\de z\\
&=\frac{N^2}{(4\pi\nu t)^{3/2}}\frac{1}{(1+|x|^2)^{\alpha}}\int_{\R^3}e^{-\frac{|z|^2}{4\nu t}}
\frac{(1+|x|^2)^{\alpha}}{(1+|x-z|^2)^{\alpha}}\de z\\
&\lesssim \frac{N^2}{(1+|x|^2)^{\alpha}} \frac{1}{(4\pi\nu t)^{3/2}} \int_{\R^3}e^{-\frac{|z|^2}{4\nu t}} (1+|z|^2)^{\alpha} \de z 
\\
&\lesssim \frac{N^2}{(1+|x|^2)^{\alpha}}(1+(\nu t)^{\alpha}),
\end{align*}
where we used 
$$
\frac{(1+|x|^2)^{\alpha}}{(1+|x-z|^2)^{\alpha}} \lesssim (1+|z|^2)^{\alpha}, \qquad \alpha \geq 1.
$$
The analogous estimate on \eqref{stima decadimento 2} follows in a similar way using that 
$\partial_j \phi$ and  $\partial_{ij} \phi$ decay faster than $\phi$ and that their modulus is bounded by $C \alpha$ 
and $C \alpha^2$, respectively. 

Then, in order to conclude the proof, we must prove 
\begin{equation}
|e^{\nu t\Delta}\omega_0^2(x)|\lesssim \frac{N^2  (1 + (\nu t)^{\alpha})}{(1+|x|^2)^{\alpha}}.
\end{equation} 
We first consider $|x|\leq 1$. In this case, the bound essentially 
follows by Lemma \ref{lem:stima-omega02}, that in fact gives a much better 
estimate (for $N = N(\nu, T) \gg1$). Indeed, recalling 
the definition \eqref{def:omega2} of $\omega_0^2$
and noticing that 
$$
e^{\nu t \Delta} \omega_0^2  = e^{-\nu (T - t) \Delta}\curl\curl(\psi W),
$$ 
we 
easily get the desired bound proceeding as in the proof of Lemma \ref{lem:stima-omega02}.
Moreover, arguing as in \eqref{stima-omega02}, one can show that for $|x|>1$,
$$
|e^{\nu t\Delta}\omega_0^2|\leq \frac{C(T,\nu)}{(1+|x|^2)^\alpha},
$$
and again here we could get an even better bound, namely for arbitrarily
large values of $\alpha$. This concludes the proof.
\end{proof}

We now define the functional space
\begin{equation}
L^\infty_\gamma(\R^3):=\left\{ f \,\,\mbox{measurable }:\mathrm{ess}\sup_{x\in\R^3}(1+|x|)^\gamma|f(x)|<\infty\right\}.
\end{equation}
In this section we are interested in the very small time behavior of 
the vorticity, and more precise $t \lesssim \frac{1}{\nu N^2}$, with $N \gg 1$. This justify the 
restriction to times $t \leq \frac{1}{\nu}$ in the following lemma, in which we prove that the vorticity $\omega$ constructed in {\bf Theorem 2} belongs to the space $C([0,\nu^{-1}];L^\infty_{2 \alpha}(\R^3))$ if $N$ is big enough.

\begin{lem}\label{lem:esistenza spazi pesati 2}
Let $r>5/2$ and $u$ be the solution of \eqref{eq:ns} constructed in {\bf Theorem 2} (in particular, under the 
choice $\rho = N^{-\beta}$ with $\beta=4r>2r+5$).  Then for the relative vorticity $\omega$ and for times 
for $t \leq 1/\nu$ we have that
\begin{equation}\label{est:decadimento omega}
|\omega(t,x) - e^{\nu t \Delta} \omega_0(x)| \leq 
  \sqrt{\frac{t}{\nu}}   \frac{\rho N^{2 - 2r }}{(1+|x|^2)^{\alpha}} ,
\end{equation}
for all $N=N(r,T,\alpha,\nu)$ sufficiently large.
\end{lem}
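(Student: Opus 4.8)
The plan is to recognize the quantity to be estimated as the purely nonlinear (Duhamel) part of the vorticity. As in Step 3 of the proof of \textbf{Theorem 2}, the mild formulation gives
\begin{equation*}
\omega(t,\cdot)-e^{\nu t\Delta}\omega_0 = \int_0^t e^{\nu(t-s)\Delta}\dive\left(\omega(s,\cdot)\otimes u(s,\cdot)-u(s,\cdot)\otimes\omega(s,\cdot)\right)\de s =: D(t,\cdot),
\end{equation*}
so it suffices to bound $D(t,\cdot)$ in the weighted space $L^\infty_{2\alpha}(\R^3)$; I abbreviate $\|f\|_{2\alpha}:=\sup_{x\in\R^3}(1+|x|^2)^{\alpha}|f(x)|$, which is equivalent to the $L^\infty_{2\alpha}$ norm. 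Since $r>5/2$ and \eqref{GlobalControl} holds, the velocity is controlled in $L^\infty$ uniformly in time, $\|u(s,\cdot)\|_{L^\infty}\lesssim\|u(s,\cdot)\|_{H^r}\lesssim\rho N^{r+1}$; thus in the $\omega$--equation the field $u$ may be treated as a given $C^1$ drift, and the persistence of the pointwise decay $(1+|x|^2)^{-\alpha}$ is standard, so that the weighted norm of $\omega$ is finite on $[0,1/\nu]$. I will then run a bootstrap: assuming $\|\omega(s,\cdot)\|_{2\alpha}\lesssim\rho N^2$ on $[0,1/\nu]$, I will show that $D$ is much smaller, which both yields \eqref{est:decadimento omega} and recovers the assumption. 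The initial size of the bootstrap is guaranteed by Lemma \ref{stime decadimento calore}, which for $t\leq 1/\nu$ (so that $(\nu t)^{\alpha}\leq 1$) gives $\|e^{\nu t\Delta}\omega_0\|_{2\alpha}\lesssim\rho N^2$.

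First I would estimate the bilinear source $S(s,\cdot):=\omega(s,\cdot)\otimes u(s,\cdot)-u(s,\cdot)\otimes\omega(s,\cdot)$ in the weighted norm. Pointwise $|S(s,y)|\leq 2|u(s,y)|\,|\omega(s,y)|$, so placing the whole decay on $\omega$ (the Biot--Savart velocity $u$ decays too slowly to be used here) and the $L^\infty$ bound on $u$,
\begin{equation*}
\|S(s,\cdot)\|_{2\alpha}\leq 2\,\|u(s,\cdot)\|_{L^\infty}\,\|\omega(s,\cdot)\|_{2\alpha}\lesssim \rho N^{r+1}\cdot\rho N^2=\rho^2 N^{r+3},
\end{equation*}
using the bootstrap bound on $\|\omega\|_{2\alpha}$. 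Hence the source itself lies in $L^\infty_{2\alpha}$ with a quadratically small size.

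The key analytic step is the action of $e^{\nu\tau\Delta}\dive$ on the weighted space. Moving the divergence onto the heat kernel and writing $\Phi_\nu(\tau,\cdot)$ for the kernel of $e^{\nu\tau\Delta}$, one reduces to a weighted convolution against $\nabla\Phi_\nu$, and the goal is the mapping property
\begin{equation*}
\|e^{\nu\tau\Delta}\dive\, g\|_{2\alpha}\lesssim \frac{1+(\nu\tau)^{\alpha}}{\sqrt{\nu\tau}}\,\|g\|_{2\alpha},
\end{equation*}
that is, the semigroup with one derivative gains the smoothing factor $(\nu\tau)^{-1/2}$ while essentially preserving the polynomial weight, the distortion factor $1+(\nu\tau)^{\alpha}$ staying bounded for $\nu\tau\leq 1$. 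This is exactly the type of weighted heat-kernel estimate already carried out in the proof of Lemma \ref{stime decadimento calore}; the only new ingredient is the extra gradient, absorbed through $\int_{\R^3}|\nabla\Phi_\nu(\tau,z)|\,\de z\lesssim (\nu\tau)^{-1/2}$, together with the pointwise comparison $(1+|x|^2)^{\alpha}(1+|x-z|^2)^{-\alpha}\lesssim(1+|z|^2)^{\alpha}$ already used there. I expect this weighted kernel estimate to be the main technical obstacle, since one must control the interplay between the Gaussian tail, the extra derivative, and the polynomial weight uniformly in $x$.

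Finally I would integrate in time. Combining the last two displays with $t\leq 1/\nu$ (so the distortion factor stays bounded),
\begin{equation*}
\|D(t,\cdot)\|_{2\alpha}\lesssim \int_0^t \frac{\|S(s,\cdot)\|_{2\alpha}}{\sqrt{\nu(t-s)}}\,\de s\lesssim \rho^2 N^{r+3}\int_0^t\frac{\de s}{\sqrt{\nu(t-s)}}\lesssim \sqrt{\frac{t}{\nu}}\,\rho^2 N^{r+3}.
\end{equation*}
Inserting $\rho=N^{-\beta}$ with $\beta=4r$ gives $\rho^2 N^{r+3}=N^{3-7r}$, while the claimed right-hand side of \eqref{est:decadimento omega} is of size $\rho N^{2-2r}=N^{2-6r}$; since $r>5/2$ one has $3-7r<2-6r$, so $\rho^2 N^{r+3}\leq \rho N^{2-2r}$ for $N$ large (the implicit constant being absorbed), which is precisely \eqref{est:decadimento omega}. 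Moreover the resulting bound on $D$ is smaller than the linear size $\rho N^2$ by a factor $O(N^{1-3r})$, so the bootstrap assumption $\|\omega(s,\cdot)\|_{2\alpha}\lesssim\rho N^2$ is recovered on all of $[0,1/\nu]$, closing the argument.
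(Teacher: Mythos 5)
Your overall strategy (Duhamel formula for $\omega-e^{\nu t\Delta}\omega_0$, a weighted smoothing estimate for $e^{\nu\tau\Delta}\dive$ on $L^\infty_{2\alpha}$, the a priori bound \eqref{GlobalControl} for $u$, and Lemma \ref{stime decadimento calore} for the linear evolution) is the same as the paper's, and your computations are arithmetically sound: the weighted kernel bound
$\|e^{\nu\tau\Delta}\dive g\|_{L^\infty_{2\alpha}}\lesssim \frac{1+(\nu\tau)^{\alpha}}{\sqrt{\nu\tau}}\|g\|_{L^\infty_{2\alpha}}$,
proved via $(1+|x|^2)^{\alpha}\lesssim(1+|y|^2)^{\alpha}(1+|x-y|^2)^{\alpha}$, is correct and is in fact a more compact route than the paper's splitting into the regions $|x-y|<|x|/2$ and $|x-y|>|x|/2$; and the exponent bookkeeping ($\rho^2N^{r+3}\leq\rho N^{2-2r}$ for $r>1$, $N$ large) closes with room to spare.

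However, there is a genuine gap at the foundation of your bootstrap: you need $t\mapsto\|\omega(t,\cdot)\|_{L^\infty_{2\alpha}}$ to be \emph{finite} (and continuous) on $[0,1/\nu]$ before the continuity argument can even start, and you dispose of this by asserting that ``persistence of the pointwise decay is standard'' for a parabolic equation with a given $C^1$ drift. This is precisely the nontrivial point. The vorticity $\omega=\curl u$ is produced by the well-posedness theory in Sobolev spaces, and nothing in that construction gives any pointwise spatial decay; membership of $\omega$ in the weighted space is not an a priori qualitative fact one can bootstrap from, it has to be \emph{proved}, and the standard proof of the persistence you invoke is exactly the argument the paper carries out: a contraction mapping for the integral equation \eqref{def:Phi} (with $u$ treated as given) in the space $Z=C([0,\nu^{-1}];H^{r-1}(\R^3))\cap C([0,\nu^{-1}];L^\infty_{2\alpha}(\R^3))$, followed by uniqueness in $H^{r-1}$ to identify the fixed point with $\curl u$. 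In other words, the ingredients you develop (the weighted kernel estimate and the product estimate $\|u\otimes\omega\|_{L^\infty_{2\alpha}}\lesssim\|u\|_{L^\infty}\|\omega\|_{L^\infty_{2\alpha}}$) are exactly the ones needed to run that fixed point, but assembling them into a bootstrap presupposes the conclusion of the fixed-point step. To repair your proof, replace the bootstrap by a contraction argument in $Z$ (or in a ball of $C([0,1/\nu];L^\infty_{2\alpha})$ intersected with the Sobolev class), solve \eqref{def:Phi} there with the estimates you already have, and then invoke uniqueness of $H^{r-1}$ solutions of the vorticity equation to conclude that the solution so constructed coincides with $\omega$; the final estimate \eqref{est:decadimento omega} then follows from the Duhamel term exactly as in your last display.
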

\begin{proof}
First, we use Lemma \ref{stime decadimento calore}  to show 
\begin{equation}\label{Comb22222}
\sup_{s\in [0,t]}\sup_{x\in\R^3}(1+|x|^2)^{\alpha}|e^{\nu s\Delta}\omega_0|\lesssim 
\rho N^2; 
\end{equation}
remember that here we have restricted to $t \leq 1/\nu$.  Then, we note 
that by \eqref{eq:InitiHr} we have
\begin{equation}\label{Comb11111}
\| e^{\nu s\Delta}\omega_0 \|_{H^r} \leq 
\| \omega_0 \|_{H^r} \leq C(T,r,\nu, \alpha) \rho   N^{r+2}.
\end{equation}

To prove the statement we notice that, once $u$ is given, we can recover $\omega$ solving the integral problem 
\begin{equation}\label{def:Phi}
\Phi[\omega]=e^{\nu t\Delta}\omega_0-\int_0^t  e^{\nu (t-s)\Delta} \nabla \cdot\left(u(s,\cdot)\otimes\omega(s,\cdot)-\omega(s,\cdot)\otimes u(s,\cdot)\right)\de s,
\end{equation}
by a fixed point argument in the functional space (the intersection is endowed with the sum of the norms)
$$
Z := C([0,\nu^{-1}]; H^{r-1}(\R^3)) \cap  
C([0,\nu^{-1}];L^\infty_{2 \alpha}(\R^3)), 
$$
where we restrict to $r > 5/2$, to ensure the embedding $H^{r-1} \hookrightarrow L^{\infty}$.
By uniqueness of $H^{r-1}$ solutions this is sufficient to show that 
$\omega$ is indeed the vorticity associated to the solution $u$.\\
Combining \eqref{Comb22222} and \eqref{Comb11111}
we get (remember that $\beta =4r$)
\begin{align}
\| e^{\nu s\Delta}\omega_0 \|_{Z} &\leq C(T,r,\nu, \alpha) \rho(N^2+   N^{r+2})\nonumber \\
&\leq C(T,r,\nu, \alpha)  (N^{2-4r}+ N^{r+2 - \beta}) \nonumber\\
&\leq C(T,r,\nu, \alpha)   N^{2 - 3r } \label{ContrRadius},
\end{align}
taking $N = N(T,r,\nu, \alpha)$ sufficiently large.
Thus,
to solve \eqref{def:Phi} in a ball of radius $N^{2-3r}$ it is sufficient to prove the contractive property
$$
\| \Phi[\omega_1] - \Phi[\omega_2]  \|_{Z} \leq \theta \| \omega_1 - \omega_2\|_{Z},
\qquad \theta <1. 
$$
Hence, by using \eqref{HeatFlowBound3}, the algebra property of $H^{r-1}$, and \eqref{eq:InitiHr} we have
\begin{align}
\| \Phi[\omega_1](t,\cdot)-\Phi[\omega_2](t,\cdot) \|_{H^{r-1}}  
& \lesssim 
\|u\|_{L^{\infty}((0,T); H^{r-1})} \|\omega_1-\omega_2\|_{C([0,T];H^{r-1})} \frac{1}{\sqrt{\nu}} 
\int_{0}^{t} (t-s)^{-1/2} \, ds 
\nonumber\\
& \lesssim 
2 \rho C(T,r,\nu, \alpha)  N^{r}  \sqrt{\frac{t}{\nu}} \|\omega_1-\omega_2\|_{C([0,T];H^r)}.
 \end{align}
Then, recalling that $\rho = N^{-\beta}$ and $t \nu \leq 1$
we also have that
$$
2 \rho C(T,r,\nu, \alpha)  N^{r}  \sqrt{\frac{t}{\nu}} \ll  1,
$$ 
since $\beta= 4r$ and we take  $N = N(r, \alpha, T, \nu)$ sufficiently large.
Thus it remains  to prove 
$$
\| \Phi[\omega_1] - \Phi[\omega_2]  \|_{L^{\infty}([0,T];L^\infty_{2 \alpha}(\R^3))} \leq \theta \| \omega_1 - \omega_2\|_{Z}. 
$$ 
Let 
$$
F(x,t) :=   \frac{|x|}{\nu t}  e^{-\frac{|x|^2}{4 \nu t}}. 
$$
 We have that
\begin{align*}
& |\Phi[\omega_1](t,x)-\Phi[\omega_2](t,x)| \lesssim \int_0^t \int_{\R^3} F(t-s,x-y) |u(s,y)||\omega_1(s,y)-\omega_2(s,y)|\de y \de s\\
& \leq \|u\|_{L^{\infty}((0,T); H^{r-1})} \|\omega_1-\omega_2\|_{C([0,T];L^\infty_{2\alpha}(\R^3))} 
\int_0^t \int_{\R^3}\frac{F(t-s,x-y)}{(1+|y|^2)^{\alpha}}\de y \de s.
\end{align*}
Now, we first consider $|x|\leq 1$. In this case we simply use 
\begin{align}
\int_0^t \int_{\R^3}\frac{F(t-s,x-y)}{(1+|y|^2)^{\alpha}}\de y \de s
&\leq  \int_0^t \int_{\R^3} F(t-s,x-y) \de y\de s\nonumber\\
&\leq \frac{1}{\sqrt{\nu}} \int_0^t(t-s)^{-1/2}\de s \lesssim \sqrt{\frac{t}{\nu}}.
\end{align}
For $|x|>1$, instead, we use 
\begin{equation}
\sup_{|x|>1}\int_0^t \int_{\R^3}\frac{F(t-s,x-y)}{(1+|y|^2)^{\alpha}}\de y \de s \lesssim 
\sup_{|x|> 1} \int_0^t \int_{\R^3}\frac{e^{- \frac{|x-y|^2}{4\nu (t-s)} }}{(\nu (t-s))^{3/2}} \frac{|x-y|}{\nu (t-s)} \frac{1}{(1+|y|^2)^{\alpha}}\de y \de s. 
\end{equation}
We split the integral in two regions: for $|x-y|<|x|/2$ we have $|y| \geq \frac{|x|}{2}$ and thus 
\begin{align*}
\int_0^t & \int_{|x-y|<|x|/2} 
\frac{e^{- \frac{|x-y|^2}{4\nu (t-s)} }}{(\nu (t-s))^{3/2}} \frac{|x-y|}{\nu (t-s)} \frac{1}{(1+|y|^2)^{\alpha}}\de y \de s
 \\ 
&\lesssim \frac{1}{(1+|x|^2)^{\alpha}}  \int_0^t \int_{\R^3} \frac{e^{- \frac{|x-y|^2}{4\nu (t-s)} }}{(\nu (t-s))^{3/2}} \frac{|x-y|}{\nu (t-s)} \de y \de s
\lesssim   \sqrt{\frac{t}{\nu}}\frac{1}{(1+|x|^2)^{\alpha}}.
\end{align*}
For the second region, namely $|x-y|>|x|/2$, we have
\begin{align*}
\int_0^t & \int_{|x-y|<|x|/2}\frac{e^{- \frac{|x-y|^2}{4\nu (t-s)} }}{(\nu (t-s))^{3/2}} \frac{|x-y|}{\nu (t-s)}  \frac{1}{(1+|y|^2)^{\alpha}} \de y \de s
\\ 
& \lesssim 
e^{- \frac{|x|^2}{16 \nu t} }
 \int_0^t \int_{\R^3} \frac{e^{- \frac{|x-y|^2}{8\nu (t-s)} }}{(\nu (t-s))^{3/2}} \frac{|x-y|}{\nu (t-s)}   \de y \de s
\lesssim 
\sqrt{\frac{t}{\nu}} e^{- \frac{|x|^2}{16 \nu t} } 
\lesssim  \sqrt{\frac{t}{\nu}}   \frac{1}{(1+|x|^2)^{\alpha}},
\end{align*}
where in the last inequality we have used $\nu t \leq 1$. Finally, by collecting these estimates we have obtained that
\begin{align}\nonumber
\sup_{x \in \mathbb{R}^3}  (1+|x|^2)^{\alpha} |\Phi[\omega_1](t,x)-\Phi[\omega_2](t,x)| 
& 
\lesssim \sqrt{\frac{t}{\nu}}
 \|u\|_{L^{\infty}((0,T); H^{r-1})} \|\omega_1-\omega_2\|_{C([0,T];L^\infty_{2\alpha}(\R^3))} 
\\ 
 & \lesssim  \label{CompareWitThis}
 \sqrt{\frac{t}{\nu}} \rho N^{r} 
C(r, \alpha, T ,\nu)  \|\omega_1-\omega_2\|_{C([0,T];L^\infty_{2\alpha}(\R^3))},
\end{align}
where we used \eqref{GlobalControl} in the last inequality.
Recalling that $\rho=N^{-\beta}$ and that $t \nu \leq 1$ we have
$$
 \sqrt{\frac{t}{\nu}}
C(r, \alpha, T, \nu) N^{r  - \beta} \ll 1,
$$ 
since $\beta = 4r$ and taking $N = N(r, \alpha, T, \nu)$ sufficiently large. 
This allow to close the fixed point argument and it shows that \eqref{def:Phi} has a unique solution in the ball of radius $N^{2-3r}$, in the topology induced by the $Z$-norm. The size of the radius of the ball was prescribed by inequality \eqref{ContrRadius}. By uniqueness in $H^{r-1}$, this solution coincides with the vorticity $\omega = \curl u$. Thus 
\begin{equation}\label{def:PhiDOPO}
\omega(\cdot, t) - e^{\nu t\Delta}\omega_0= -\int_0^t  e^{\nu (t-s)\Delta} \nabla \cdot\left(u(s,\cdot)\otimes\omega(s,\cdot)-\omega(s,\cdot)\otimes u(s,\cdot)\right)\de s,
\end{equation}
and proceeding as before we also get the estimate (compare with \eqref{CompareWitThis})
\begin{align*}
\| \omega(\cdot, t) & - e^{\nu t\Delta}\omega_0 \|_{L^{\infty}([0,T];L^\infty_{2 \alpha}(\R^3))}
 \leq
\sqrt{\frac{t}{\nu}} \rho N^{r} 
C(r, \alpha, T ,\nu) \| \omega \|_{C([0,T];L^\infty_{2\alpha}(\R^3))}
\\ \nonumber
& \leq
\sqrt{\frac{t}{\nu}}
C(r, \alpha, T ,\nu) \rho N^{r +2 - 3r} 
\leq
\sqrt{\frac{t}{\nu}} 
C(r, \alpha, T ,\nu) \rho N^{2 - 2r},
\end{align*}
taking $N = N(r, \alpha, T, \nu)$ sufficiently large,
from which the inequality \eqref{est:decadimento omega} follows.  
\end{proof}

Now we are ready to prove that the reconnection in {\bf Theorem 2} is not instantaneous and it must happen on a resistive time scale, in particular for times proportional to $\nu^{-1}N^{-2}$. 
The result is the following.
\begin{thm}\label{thm:tempo}
Let $u$ be the solution of \eqref{eq:ns} constructed in {\bf Theorem 2}.  Then, there exists a constant $C^*>0$, with $C^*\lesssim\frac{1}{\nu N^{2}}$, such that $\omega(t,x)$ does not have zeros for any $t\in [0,C^*)$. In particular, the reconnection must occur for times of the order of $\mathcal{O}(\nu^{-1}N^{-2})$.
\end{thm}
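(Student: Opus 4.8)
The plan is to compare the full vorticity $\omega(t,\cdot)$ with its linear evolution $e^{\nu t\Delta}\omega_0$ and to show that the latter stays pointwise bounded below by a positive multiple of $\rho N^2(1+|x|^2)^{-\alpha}$, uniformly in $x\in\R^3$, as long as $t\lesssim(\nu N^2)^{-1}$. Since Lemma~\ref{lem:esistenza spazi pesati 2} already controls the nonlinear correction by $\sqrt{t/\nu}\,\rho N^{2-2r}(1+|x|^2)^{-\alpha}$ in \eqref{est:decadimento omega}, the triangle inequality
\begin{equation}
|\omega(t,x)| \geq |e^{\nu t\Delta}\omega_0(x)| - |\omega(t,x)-e^{\nu t\Delta}\omega_0(x)|
\end{equation}
will then produce a strictly positive lower bound for $N$ large, hence the absence of zeros on the whole of $\R^3$. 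The crucial point is that the main term and the correction carry the same spatial weight $(1+|x|^2)^{-\alpha}$, so the comparison is uniform in $x$; this is exactly why the weighted space $L^\infty_{2\alpha}$ of Lemma~\ref{lem:esistenza spazi pesati 2} is the right framework.

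The main step is the pointwise lower bound on $e^{\nu t\Delta}\omega_0$. Writing $\omega_0=\rho(\omega_0^1+\omega_0^2)$ and recalling the decomposition $\omega_0^1=N^2\phi B_{N}+N\Omega+\curl\Omega$ from \eqref{def:omega01-2}, I would isolate the leading term $N^2\phi B_{N}$. Since $B_{N}$ is a Beltrami field of frequency $N$, one has $e^{\nu t\Delta}B_{N}=e^{-\nu tN^2}B_{N}$, whence
\begin{equation}
e^{\nu t\Delta}(N^2\phi B_{N})=N^2 e^{-\nu tN^2}\phi B_{N}+N^2[e^{\nu t\Delta},\phi]B_{N}.
\end{equation}
For $t\leq c(\nu N^2)^{-1}$ the exponential factor satisfies $e^{-\nu tN^2}\geq e^{-c}$, and since $|B_{N}|\equiv1$ the first summand is bounded below by $e^{-c}N^2(1+|x|^2)^{-\alpha}$. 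The commutator is handled exactly by the Jensen/mean-value argument from the proof of Lemma~\ref{Lemma:dec}: pointwise,
\begin{equation}
|[e^{\nu t\Delta},\phi]B_{N}(x)|\leq\int_{\R^3}G(t,x-y)\,|\phi(y)-\phi(x)|\,\de y,
\end{equation}
and splitting according to whether $|x-y|\lesssim 1+|x|$ (where $|\nabla\phi|\lesssim_\alpha(1+|z|^2)^{-\alpha}$ transfers the weight onto $x$ and the heat kernel yields the gain $\sqrt{\nu t}$) or $|x-y|\gtrsim 1+|x|$ (where the Gaussian is exponentially small and dominates any polynomial), one gets $|[e^{\nu t\Delta},\phi]B_{N}(x)|\lesssim_\alpha\sqrt{\nu t}\,(1+|x|^2)^{-\alpha}$. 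For $t\leq(\nu N^2)^{-1}$ this makes the commutator contribution $\lesssim_\alpha N(1+|x|^2)^{-\alpha}$, i.e.\ lower order than the surviving main term.

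It remains to discard the remaining pieces. The terms $N\Omega$ and $\curl\Omega$ carry at most one power of $N$ together with fast-decaying derivatives of $\phi$, so arguing as in Lemma~\ref{stime decadimento calore} their heat evolution is $O(N)(1+|x|^2)^{-\alpha}$; likewise $e^{\nu t\Delta}\omega_0^2=e^{-\nu(T-t)\Delta}\curl\curl(\psi W)$ is, by Lemma~\ref{lem:stima-omega02} and the decay bound \eqref{ExpBeatPoly}, dominated by $C(T,\nu)(1+|x|^2)^{-\alpha}$ uniformly in $N$ (here $t\leq C^*\ll T$). Collecting these estimates yields $|e^{\nu t\Delta}\omega_0(x)|\geq c\,\rho N^2(1+|x|^2)^{-\alpha}$ for all $x$ once $N$ is large, and combining with \eqref{est:decadimento omega} gives, for $t\leq C^*:=c(\nu N^2)^{-1}$,
\begin{equation}
|\omega(t,x)|\geq\frac{\rho}{(1+|x|^2)^{\alpha}}\left(cN^2-\sqrt{\frac{t}{\nu}}\,N^{2-2r}\right)\geq\frac{\rho}{(1+|x|^2)^{\alpha}}\left(cN^2-\frac{C}{\nu}N^{1-2r}\right)>0
\end{equation}
for $N=N(r,\alpha,T,\nu)$ large, so $\omega(t,\cdot)$ has no zeros on $[0,C^*)$ with $C^*\lesssim(\nu N^2)^{-1}$. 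The main obstacle is the pointwise control of the commutator $[e^{\nu t\Delta},\phi]B_{N}$ \emph{with the correct spatial decay} $(1+|x|^2)^{-\alpha}$; everything else is bookkeeping of powers of $N$ together with the restriction $t\lesssim(\nu N^2)^{-1}$, which is precisely what keeps the oscillatory Beltrami factor from being damped by the heat flow and thereby fixes the reconnection time scale.
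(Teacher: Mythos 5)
Your proposal is correct, and its skeleton coincides with the paper's: the triangle inequality $|\omega(t,x)|\geq|e^{\nu t\Delta}\omega_0(x)|-|\omega(t,x)-e^{\nu t\Delta}\omega_0(x)|$, the weighted estimate \eqref{est:decadimento omega} for the nonlinear part, and a pointwise lower bound carrying the same weight $(1+|x|^2)^{-\alpha}$ for the linear evolution. Where you genuinely diverge is in how that lower bound is produced. The paper compares with the time-zero field, writing $|e^{\nu t\Delta}\omega_0|\geq\rho|\omega_0^1|-\rho|e^{\nu t\Delta}\omega_0^1-\omega_0^1|-\rho|e^{\nu t\Delta}\omega_0^2|$ and invoking \eqref{est:lb-omega1} for the first term; the difference term is estimated through the heat kernel, and its dominant contribution, the term \eqref{split2} built on the oscillation bound $|B_N(x-y)-B_N(x)|\leq N|y|$, has size $N^3\sqrt{\nu t}\,(1+|x|^2)^{-\alpha}$, so requiring it to be $\ll N^2$ is what fixes $t\lesssim(\nu N^2)^{-1}$. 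You instead exploit the exact eigenrelation $e^{\nu t\Delta}B_N=e^{-\nu tN^2}B_N$, so the leading term $N^2e^{-\nu tN^2}\phi B_N$ survives undistorted, and the only error it generates is the commutator, $N^2\bigl|[e^{\nu t\Delta},\phi]B_N\bigr|\lesssim N^2\sqrt{\nu t}\,(1+|x|^2)^{-\alpha}$; the weighted form of this commutator bound is exactly the paper's estimate of \eqref{split1} and rests on the same mean-value/weight-transfer mechanism as Lemma \ref{ForthLemmaS7}, so the technical point you single out as crucial is sound. The two routes are two sides of the identity $e^{\nu t\Delta}(\phi B_N)-\phi B_N=(e^{-\nu tN^2}-1)\phi B_N+[e^{\nu t\Delta},\phi]B_N$: in the paper the time restriction sits in the error term, in your version it sits in keeping the damping factor $e^{-\nu tN^2}\geq e^{-c}$. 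Yours has the smaller error ($N^2\sqrt{\nu t}$ versus $N^3\sqrt{\nu t}$) and makes transparent that $C^*\sim(\nu N^2)^{-1}$ is precisely the viscous damping time of the frequency-$N$ Beltrami mode. One point you should still make explicit: the final clause of the theorem (reconnection occurs at a time of order $\mathcal{O}(\nu^{-1}N^{-2})$) also needs the matching upper side, namely that the right-hand side of \eqref{est:choice-parameters} is already small once $\nu N^2T\gg1$, so that a hyperbolic zero is present by any such time $T$; the paper closes with this one-line observation, and your write-up needs it as well.
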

\begin{proof}
First of all, notice that
$$
|\omega(t,x)| \geq |e^{\nu t \Delta} \omega_0 (x)| - |\omega(t,x) - e^{ \nu t \Delta} \omega_0(x)|,
$$
thus using inequality \eqref{est:decadimento omega}, the statement follows once we have proved that 
\begin{equation}\label{dnsgkoldskjgksl}
|e^{\nu t \Delta} \omega_0 (x)| \geq 2  \sqrt{\frac{t}{\nu}} \frac{\rho N^{2-2r}  }{(1+|x|^2)^{\alpha}}.
\end{equation}
We recall \eqref{VorticityFinalDef}, namely that  
$$
\omega_0 := \rho (\omega_0^1 + \omega_0^2).
$$ 
By using \eqref{est:lb-omega1} we have that
\begin{align}
|e^{\nu t\Delta}\omega_0 (x)|&\geq \rho|\omega_0^1 (x)|- \rho|e^{\nu t\Delta}\omega_0^1 (x)-\omega_0^1 (x)| -\rho
|e^{\nu t\Delta}\omega_0^2 (x)| \nonumber\\
& \geq {\rho}\left(\frac{c N^2}{(1+|x|^2)^{\alpha}}-  \underbrace{|e^{\nu t\Delta}\omega_0^1 (x)-\omega_0^1 (x)|}_{(*)}
- |e^{\nu t\Delta}\omega_0^2 (x)|\right).
\label{costante}
\end{align}
%
Since, for times say $t \leq 1$ and $N=N(\nu, r) \gg1$, we have that 
$c {\rho}N^2 \gg \sqrt{\frac{t}{\nu}} \rho  N^{2-2r}$, the first term $\frac{c \rho N^2}{(1+|x|^2)^{\alpha}}$ 
of \eqref{costante} satisfies  the lower bound  
\eqref{dnsgkoldskjgksl} (in fact, with a much larger constant). 
Thus 
it is enough to prove that the remaining quantities inside the parenthesis are smaller than $\frac{cN^2}{(1+|x|^2)^{\alpha}}$. 
This is clearly true for 
$|e^{\nu t\Delta}\omega_0^2 (x)|$, that satisfy en even better point-wise bound, as we have seen in  
the proof of Lemma \ref{stime decadimento calore}.\\
\\
It thus remains to show that $(*)$ is smaller than $\frac{cN^2}{(1+|x|^2)^{\alpha}}$. By using the explicit formula for the heat kernel, we have that 
\begin{equation*}
|e^{\nu t\Delta}\omega_0^1 (x)-\omega_0^1 (x)| 
\leq \frac{1}{(4\pi\nu t)^{3/2}}\int_{\R^3}e^{-\frac{y^2}{4\nu t}}|\omega_0^1(x-y)-\omega_0^1(x)|\de y.
\end{equation*}
We recall that $\omega_0^1$ is defined as
$$
\omega_0^1=N^2\phi B_{N}+ N\nabla\phi \wedge B_{N}+(B_{N}\cdot\nabla)\nabla\phi-\Delta\phi B_{N}-(\nabla\phi\cdot\nabla)B_{N}.
$$
We do an explicit computation for the part involving the main contribution, i.e. $N^2\phi B_{N}$. It will be clear from the proof that the remaining terms can be treated in an analogous way. In fact they decay even better since the partial derivatives of $\phi$ decays better than $\phi$.\\
We compute
\begin{align}
\frac{N^2}{(4\pi\nu t)^{3/2}}\int_{\R^3}e^{-\frac{y^2}{4\nu t}}&|\phi(x-y) B_{N}(x-y)-\phi(x) B_{N}(x)|\de y
\nonumber
\\
&\leq \frac{N^2}{(4\pi\nu t)^{3/2}}\int_{\R^3}e^{-\frac{y^2}{4\nu t}} |\phi(x-y)-\phi(x)||B_N(x-y)|\de y \label{split1} 
\\
&+\frac{N^2}{(4\pi\nu t)^{3/2}}\int_{\R^3}e^{-\frac{y^2}{4\nu t}}|\phi(x)||B_{N}(x-y)- B_{N}(x)|\de y \label{split2} .
\end{align}
Since $|\nabla B_N(z)| \lesssim N$ (note that the bound is uniform over $z \in \R^3$), we get
$$
|B_N(x-y) - B_N(x)|  
= \left| \int_0^1 y \cdot \nabla B_N(x-sy) \, ds \right| \leq |y| \sup_{z \in \mathbb{R}^3} |\nabla B_N(z)| \leq |y| N.
$$ 
Similarly
$$
|\phi(x-y) - \phi(x)|  
= \left| \int_0^1 y \cdot \nabla \phi (x-sy) \, ds \right| \leq |y| \sup_{s \in [0,1], \, y \in \R^3}  | \nabla \phi (x- s y)| 
\leq C \alpha (|y| + |y|^{2\alpha + 1}) |\phi(x)|,
$$
where we used Lemma \ref{ForthLemmaS7} in the last inequality
that is valid for some constant $C(\alpha)$ that is independent on $z$. 
Thus
for \eqref{split1} we have that (recall $\nu t \leq 1$)
\begin{align*}
|\eqref{split1}| & \lesssim  \frac{ |\alpha| N^2}{(4\pi\nu t)^{3/2}}|\phi(x)| \int_{\R^3}e^{-\frac{y^2}{4\nu t}}(|y| + |y|^{2\alpha+1})   \de y 
\\ \nonumber
&\lesssim   
\frac{|\alpha| N^2}{(1 + |x|)^{\alpha}} \frac{1}{(4\pi\nu t)^{3/2}}  \int_{\R^3}e^{-\frac{y^2}{4\nu t}}(|y| + |y|^{2\alpha+1})  \de y 
\lesssim \frac{N^2 (\sqrt{\nu t} + (\nu t)^{\frac{2\alpha +1}{2}}  )}{(1 + |x|)^{\alpha}} 
\lesssim \frac{N^2 \sqrt{\nu t}}{(1 + |x|)^{\alpha}} .
\end{align*} 
The main contribution is that of \eqref{split2}, that is bounded as
\begin{equation*}
|\eqref{split2}|  \lesssim \frac{N^3}{(1 + |x|)^{\alpha}} \frac{1}{(4\pi\nu t)^{3/2}} \int_{\R^3}e^{-\frac{y^2}{4\nu t}}|y|   \de y 
\lesssim   
 \frac{N^3 \sqrt{\nu t}}{(1 + |x|)^{\alpha}} .
\end{equation*}
These are smaller than than $\frac{cN^2}{(1+|x|^2)^{\alpha}}$
precisely for 
\begin{equation}
t\leq \frac{c^*}{\nu N^2},
\end{equation}
where $c^*>0$ is a suitable small constant.
Thus, the solution $\omega$ does have zeros for $t\leq \frac{c^*}{\nu N^2}$. On the other hand, the quantity in \eqref{est:choice-parameters} can be made arbitrary small provided that 
$$
\nu N^2 T\gg 1\Longrightarrow  T\gg \frac{1}{\nu N^2}.
$$
This implies that the reconnection must occur for a time of the order $\frac{c^{**}}{\nu N^2}$, with $c^{**} \gg c^*$. This concludes the proof.
\end{proof}

\begin{rem}\label{rem:tempo locale}
It is well-known that any $u_0\in H^r(\R^3)$, with $r>\frac52$, gives rise to a local smooth solution on some interval $(0,T^*)$ with the local time of existence satisfying 
$$
T^*\geq C \max\left\{ \frac{1}{\|u_0\|_{H^r}}, \frac{\nu^3}{\|u_0\|^4_{H^1}} \right\} .
$$
In particular, the maximal time of existence has a positive, $\nu$-independent
lower bound (notice that we have enough regularity to locally solve the Euler system).
This prevents our strategy from working with local smooth solutions, without providing a longer existence time for the initial datum $u_0$ in \eqref{def:u_0}. In fact,  
if we set $\rho=1$ in \eqref{def:u_0} we would have a local smooth solution defined up to time $T^*\sim \max\left\{ \frac{1}{N^4}, \frac{\nu^3}{N^{8}}\right\}$, which is much smaller than the reconnection time which is of order $\sim \nu^{-1}N^{-2}$ if $\nu$ is small (recall that in the proof of {\bf Theorem 2} we set $r=3$). However, this suggest that our proof could work with local smooth solutions if we are allowed to choose large values of the viscosity $\nu$ depending on $N$, for example $\nu\gg N^{3/2}$.
\end{rem}

\appendix
\section{}

We provide here two technical lemmas we used in the proof of Theorem \ref{thm:tempo} and {\bf Theorem 2} respectively.
\begin{lem}\label{ForthLemmaS7}
Let $\alpha \geq 1$ and let $\phi(x) := \frac{1}{(1 + |x|^2)^{\alpha}}$. Then
\begin{equation}\label{SupCalc}
\sup_{s \in [0,1], \, y \in \R^3}  |\nabla \phi(x- s y)| 
\leq C \alpha (1+ |y|^{2\alpha}) |\phi(x)|.
\end{equation}
\end{lem}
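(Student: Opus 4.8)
The plan is to reduce the statement to a single elementary inequality about the weight $1+|\cdot|^2$. First I would compute the gradient explicitly: since $\phi(x)=(1+|x|^2)^{-\alpha}$ one has $\nabla\phi(z)=-2\alpha z\,(1+|z|^2)^{-\alpha-1}$, so that, using $2|z|\le 1+|z|^2$,
\begin{equation}
|\nabla\phi(z)|=2\alpha|z|\,(1+|z|^2)^{-\alpha-1}\le \alpha\,(1+|z|^2)^{-\alpha}.
\end{equation}
Setting $z=x-sy$ with $s\in[0,1]$, the problem is thus reduced to controlling the ratio $(1+|x|^2)^{\alpha}/(1+|z|^2)^{\alpha}$ by a polynomial in $|y|$ times a factor involving only $\phi(x)$.

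The key step is the pointwise bound $1+|x|^2\le (1+|z|^2)(1+|y|)^2$, valid uniformly in $s\in[0,1]$. To prove it, note that $|x|\le |z|+|x-z|=|z|+s|y|\le |z|+|y|$, so it suffices to check the elementary inequality $1+(a+b)^2\le (1+a^2)(1+b)^2$ for all $a,b\ge 0$; expanding, the difference equals $2b(a^2-a+1)+a^2b^2\ge 0$, since $a^2-a+1=(a-\tfrac12)^2+\tfrac34>0$. Raising to the power $\alpha$ gives
\begin{equation}
\frac{(1+|x|^2)^{\alpha}}{(1+|x-sy|^2)^{\alpha}}\le (1+|y|)^{2\alpha},\qquad s\in[0,1].
\end{equation}

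Combining the two displays yields $|\nabla\phi(x-sy)|\le \alpha\,(1+|y|)^{2\alpha}\,\phi(x)$, uniformly in $s\in[0,1]$, which is already of the desired form. To match the stated right-hand side I would finally pass from $(1+|y|)^{2\alpha}$ to $(1+|y|^{2\alpha})$ by splitting into the cases $|y|\le 1$, where $(1+|y|)^{2\alpha}\le 2^{2\alpha}$, and $|y|\ge 1$, where $(1+|y|)^{2\alpha}\le (2|y|)^{2\alpha}=2^{2\alpha}|y|^{2\alpha}$; hence $(1+|y|)^{2\alpha}\le 4^{\alpha}(1+|y|^{2\alpha})$, which gives \eqref{SupCalc} with $C=4^{\alpha}$. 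There is no serious obstacle here: the only point worth flagging is that the constant $C$ must be allowed to depend on $\alpha$ (indeed an example with $|y|\simeq 1$ and $|x-sy|\simeq 1/\sqrt{\alpha}$, so that $x-sy$ sits near the maximum of $|\nabla\phi|$, shows the ratio genuinely grows exponentially in $\alpha$), which is harmless since $\alpha$ is fixed throughout and is exactly the dependence recorded as $C(\alpha)$ where the lemma is invoked.
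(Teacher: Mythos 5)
Your proof is correct and differs from the paper's in its technical execution. After the same explicit computation of $\nabla\phi$, the paper proceeds by a dichotomy: in the regime $|x-sy|\ge \tfrac{1}{2}|x|$ it compares the weights $(1+|x-sy|^2)^{-\alpha}$ and $(1+|x|^2)^{-\alpha}$ directly, while in the regime $|x-sy|<\tfrac{1}{2}|x|$ it uses $|x|\le 2|y|$ to trade the factor $(1+|x|^2)^{\alpha}$ for a polynomial in $|y|$. You replace this case analysis by the single Peetre-type inequality $1+|x|^2\le (1+|x-sy|^2)(1+|y|)^2$, which handles both regimes at once and lets you track the constant explicitly as $C=4^{\alpha}$. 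The two arguments are of comparable length, but yours is tighter on one point: the paper's first-case bound $\frac{1}{(1+|x-sy|^2)^{\alpha}}\le\frac{1}{(1+|x|^2)^{\alpha}}$ is literally valid only when $|x-sy|\ge|x|$, and in the remaining range $\tfrac{1}{2}|x|\le |x-sy|<|x|$ it must be weakened by a factor $4^{\alpha}$ (coming from $1+|x-sy|^2\ge \tfrac{1}{4}(1+|x|^2)$), so both proofs really yield an $\alpha$-dependent constant. Your closing observation that this exponential dependence on $\alpha$ is unavoidable is a worthwhile addition that the paper leaves implicit; it is consistent with how the lemma is invoked in the proof of Theorem \ref{thm:tempo}, where the constant is indeed written as $C(\alpha)$.
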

\begin{proof}
First of all, we have that
\begin{equation}\label{NablaExpansionOfPhi}
\nabla \phi(x- s y) = - \frac{2 \alpha (x-sy)}{(1 + |x- s y|^2)^{\alpha +1}}.
\end{equation} 
When we compute the $\sup$ in \eqref{SupCalc} we distinguish two regimes. 
First we consider $s,y$ such that 
$|x- s y| \geq \frac12 |x|$. In this case we simply bound $\frac{1}{(1 + |x- s y|^2)^{\alpha}} \leq \frac{1}{(1 + |x|^2)^{\alpha}}$, 
that plugged into \eqref{NablaExpansionOfPhi} gives the \eqref{SupCalc} and, more 
precisely, the $\lesssim \alpha |\phi(x)| $ on the right hand side. 
If we rather consider $s,y$ such that 
$|x- s y| < \frac12 |x|$ it means that $|x| \leq 2 |sy| \leq 2 |y|$ (recall $s \in [0,1]$), obtaining that 
\begin{align*}
|\nabla \phi(x- s y)|&=2\alpha \frac{ |x-sy|}{(1 + |x- s y|^2)^{\alpha +1}}\\
&\leq \alpha|\phi(x)|\frac{ |x- s y|}{(1 + |x- s y|^2)^{\alpha +1}}(1+|x|^2)^\alpha\\
&\leq C\alpha|\phi(x)|(1+|x|^2)^\alpha\\
&\leq C\alpha|\phi(x)|(1+|y|^{2\alpha}),
\end{align*}
which thus implies \eqref{SupCalc}.
\end{proof}

We now prove the quantitative statement \eqref{eq:stima Hs quantitativa}. 
\begin{lem}\label{stima norme ns}
Let $r \geq 1$ integer. Let $\nu>0$ and $u_0 \in H^r(\R^3)$ be a divergence-free vector field such that
\begin{equation}
\|u_0\|_{H^k}\leq C N^{k+1-\beta},
\end{equation}
for all $k=0,...,r$, with $\beta>2$. For all  
$N = N(\nu, k, \beta)$ sufficiently large the following holds. There exists a unique global strong solution 
$$
u\in C(\R^+; H^r(\R^3))\cap L^2(\R^+;\dot H^{r+1}(\R^3)),
$$ 
such that, for all $k=0,..., r$, the following bound holds
\begin{equation}
\sup_{t\in[0,\infty)}\|u(t,\cdot)\|_{H^k}^2+ \nu \int_0^\infty \|\nabla u(s,\cdot)\|_{H^k}^2\de s\leq C_k\|u_0\|_{H^k}^2,
\end{equation}
for some positive constant $C_k>0$.
\end{lem}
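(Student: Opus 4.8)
The plan is to read off the quantitative bound \eqref{eq:stima Hs quantitativa} from a priori energy estimates performed directly on the global solution, whose existence is already guaranteed. First I would verify that the smallness hypothesis of Theorem~\ref{lem:stima-ns} holds: by interpolation between $L^2$ and $H^1$,
\[
\|u_0\|_{H^{1/2}}\le\|u_0\|_{L^2}^{1/2}\|u_0\|_{H^1}^{1/2}\lesssim N^{(1-\beta)/2}N^{(2-\beta)/2}=N^{3/2-\beta},
\]
which is $\le C\nu$ once $N=N(\nu,\beta)$ is large, since $\beta>2$. Thus Theorem~\ref{lem:stima-ns} furnishes a global solution $u\in C(\R^+;H^r)\cap L^2(\R^+;\dot H^{r+1})$, and every manipulation below is legitimate on this solution. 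The same theorem, or equivalently the $\dot H^{1/2}$ energy estimate absorbing the nonlinearity through the smallness $\|u\|_{\dot H^{1/2}}\le 2C\nu$, provides the \emph{engine} of the argument:
\[
\sup_{t}\|u(t)\|_{\dot H^{1/2}}^2+\nu\int_0^\infty\|u(s)\|_{\dot H^{3/2}}^2\,\de s\lesssim\|u_0\|_{\dot H^{1/2}}^2\lesssim\nu^2 .
\]

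Next I would propagate regularity by induction on the homogeneous level $j=1,\dots,r$. Applying $\Lambda^j$ (with $\Lambda=|D|$) to the momentum equation, pairing in $L^2$ with $\Lambda^j u$, and using $\dive u=0$ to discard the top-order transport term $\langle u\cdot\nabla\Lambda^j u,\Lambda^j u\rangle=0$, I obtain
\[
\tfrac12\tfrac{\de}{\de t}\|u\|_{\dot H^j}^2+\nu\|u\|_{\dot H^{j+1}}^2=-\big\langle\Lambda^j(u\cdot\nabla u)-u\cdot\nabla\Lambda^j u,\ \Lambda^j u\big\rangle .
\]
The commutator/product law (see \cite{BCD}) bounds the right-hand side by $C\|u\|_{\dot H^j}\|u\|_{\dot H^{j+1}}\|u\|_{\dot H^{3/2}}$, and Young's inequality absorbs half of the dissipation, leaving
\[
\tfrac{\de}{\de t}\|u\|_{\dot H^j}^2+\nu\|u\|_{\dot H^{j+1}}^2\le\tfrac{C}{\nu}\|u\|_{\dot H^{3/2}}^2\,\|u\|_{\dot H^j}^2 .
\]

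Finally I would close by Grönwall with the weight $\tfrac{C}{\nu}\|u\|_{\dot H^{3/2}}^2$, which by the engine is integrable in time with $\tfrac{C}{\nu}\int_0^\infty\|u\|_{\dot H^{3/2}}^2\,\de s\lesssim\tfrac1{\nu^2}\|u_0\|_{\dot H^{1/2}}^2\lesssim1$. Hence
\[
\sup_t\|u\|_{\dot H^j}^2+\nu\int_0^\infty\|u\|_{\dot H^{j+1}}^2\,\de s\le\|u_0\|_{\dot H^j}^2\exp\!\Big(\tfrac{C}{\nu}\!\int_0^\infty\!\|u\|_{\dot H^{3/2}}^2\,\de s\Big)\le C_j\|u_0\|_{\dot H^j}^2 ,
\]
with $C_j$ an absolute constant, independent of $N$ and $\nu$. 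Summing over $0\le j\le k$ (the case $j=0$ being the exact energy identity) and using $\|u\|_{H^k}^2=\sum_{j=0}^k\binom{k}{j}\|u\|_{\dot H^j}^2$ yields the claimed estimate; the stated function space is exactly what Theorem~\ref{lem:stima-ns} provides. I expect the main obstacle to be the trilinear estimate at the critical endpoint: $\dot H^{3/2}(\R^3)$ just fails to embed into $L^\infty$, so the bound $|\langle\Lambda^j(u\cdot\nabla u),\Lambda^j u\rangle|\lesssim\|u\|_{\dot H^j}\|u\|_{\dot H^{j+1}}\|u\|_{\dot H^{3/2}}$ should be obtained via a Bony paraproduct decomposition (or a logarithmic Besov refinement at regularity $\dot B^{3/2}_{2,1}$) rather than a crude H\"older--Sobolev splitting. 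Once this product law is secured, the remaining steps are routine.
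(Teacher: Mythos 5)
Your proof is correct, but it follows a genuinely different route from the paper's. The paper never touches the critical $\dot H^{3/2}$ dissipation: after the same initial smallness check and reduction to $\nu=1$ by scaling, it first proves the $k=1$ bound by an elementary monotonicity argument — the $\dot H^1$ energy estimate plus Gagliardo--Nirenberg shows that $\|u\|_{L^2}^2\|\nabla u\|_{L^2}^2$ is decreasing once $\|u_0\|_{L^2}^4\|\nabla u_0\|_{L^2}^4$ is small (a Leray/Ladyzhenskaya-type criterion, available here because $\beta>2$), which yields both $\sup_t\|\nabla u\|_{L^2}^2\le\|\nabla u_0\|_{L^2}^2$ and the integrated bound $\int_0^\infty\|\nabla^2u\|_{L^2}^2\,\de s\lesssim\|\nabla u_0\|_{L^2}^2$; then for $k\ge2$ it runs the inhomogeneous $H^k$ estimate with the tame product bound $\|u\otimes u\|_{H^k}\lesssim\|u\|_{L^\infty}\|u\|_{H^k}$ and Gr\"onwall with weight $\|u\|_{L^\infty}^2$, whose time integral is controlled by the $k=1$ dissipation via Sobolev embedding. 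Everything there is crude H\"older/Sobolev; no paraproducts. Your scheme instead uses the Fujita--Kato $\dot H^{1/2}$ smallness as the engine and the critical weight $\|u\|_{\dot H^{3/2}}^2$ at every homogeneous level $j$, which hinges on the endpoint commutator estimate
\begin{equation*}
\bigl|\bigl\langle\Lambda^j(u\cdot\nabla u)-u\cdot\nabla\Lambda^j u,\ \Lambda^j u\bigr\rangle\bigr|\lesssim\|u\|_{\dot H^{3/2}}\,\|u\|_{\dot H^j}\,\|u\|_{\dot H^{j+1}}.
\end{equation*}
This estimate is true, and you are right both that a naive H\"older--Sobolev splitting fails (since $\dot H^{3/2}\not\hookrightarrow L^\infty$) and that a Bony decomposition proves it (the low-high piece gains from the commutator, with $\|\nabla S_{k-1}u\|_{L^\infty}\lesssim 2^k\|u\|_{\dot H^{3/2}}$ by Bernstein and Cauchy--Schwarz; the high-low and high-high pieces close by the same dyadic bookkeeping); but note that this lemma, which you only sketch, carries essentially all the analytic weight of your argument, whereas the paper's two Gr\"onwall weights ($\|u\|_{L^2}^4\|\nabla u\|_{L^2}^4$ and $\|u\|_{L^\infty}^2$) need nothing beyond classical interpolation. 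What each approach buys: yours treats all levels $j\ge1$ uniformly, keeps the constants $\nu$-explicit without the rescaling step, and needs only smallness of $\|u_0\|_{H^{1/2}}/\nu$ rather than of $\|u_0\|_{L^2}\|\nabla u_0\|_{L^2}$, so it is slightly more general; the paper's is more elementary and self-contained, at the cost of a two-stage structure in which the $k=1$ case plays a distinguished role. One last small remark: the quantitative dissipation bound $\nu\int_0^\infty\|u\|_{\dot H^{3/2}}^2\,\de s\lesssim\|u_0\|_{\dot H^{1/2}}^2$ is not literally part of the statement of Theorem \ref{lem:stima-ns}, so, as you indicate, you must rerun the $\dot H^{1/2}$ energy estimate using $\|u(t)\|_{H^{1/2}}\le2C\nu$ to absorb the nonlinearity — this is standard, but it should be said explicitly in a final write-up.
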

\begin{proof}
Note that if $u$ solves the Navier-Stokes equation with viscosity $\nu = 1$, then 
$u^{\nu} (x,t) := \nu u (x, \nu t)$ solves the Navier-Stokes equation with viscosity $=\nu$. This allows us to reduce to prove the 
theorem setting $\nu =1$.
First of all, notice that 
\begin{equation}
\|u_0\|_{{H}^{\frac12}}\leq\|u_0\|_{L^2}^\frac12\|\nabla u_0\|_{L^2}^\frac12\ll 1,
\end{equation}
thus Theorem \ref{lem:stima-ns} implies that there exists a unique global smooth solution $u\in C(\R^+; H^r(\R^3))\cap L^2(\R^+;\dot{H}^{r+1}(\R^3))$ starting from $u_0$. We now prove the quantitative estimate. 
For $k=0$ the statement follows by the energy identity. Thus we assume $k \geq 1$.
\\
\\
\emph{\underline{Proof for $k=1$}} 
\\
\\
In this step we use the smallness assumption on the initial datum. We differentiate the equations \eqref{eq:ns} and we multiply by $\nabla u$ we obtain that
\begin{align*}
\frac12\frac{\de}{\de t}\|\nabla u\|^2_{L^2}+\|\nabla^2 u\|^2_{L^2}&\leq \int_{\R^3}|u||\nabla u||\nabla^2 u|\de x\\
&\leq \|u\|_{L^6}\|\nabla u\|_{L^3}\|\nabla^2 u\|_{L^2}\\
&\leq \|\nabla u\|_{L^2}\|\nabla u\|_{L^2}^{\frac12}\|\nabla u\|_{L^6}^{\frac12}\|\nabla^2 u\|_{L^2}.
\end{align*}
Then we use that
$$
\|\nabla u\|_{L^6}\leq C\|\nabla^2 u\|_{L^2},
$$
to compute
\begin{align*}
\frac12\frac{\de}{\de t}\|\nabla u\|^2_{L^2}+\|\nabla^2 u\|^2_{L^2}&\leq C\|\nabla u\|_{L^2}^\frac32\|\nabla^2 u\|_{L^2}^\frac32.
\end{align*}
We use the Gagliardo–Nirenberg inequality
$$
\|\nabla u\|_{L^2}\leq C\|u\|_{L^2}^\frac12\|\nabla^2 u\|_{L^2}^\frac12,
$$
and together with Young's inequality with exponents $8$ and $\frac87$ we deduce that
\begin{align*}
\|\nabla u\|_{L^2}^\frac32\|\nabla^2 u\|_{L^2}^\frac32&\leq C\|u\|_{L^2}^\frac14\|\nabla^2 u\|_{L^2}^\frac14\|\nabla u\|_{L^2}\|\nabla^2 u\|_{L^2}^\frac32\\
&\leq C\|u\|_{L^2}^2\|\nabla u\|_{L^2}^8+\frac12\|\nabla^2 u\|_{L^2}^2\\
&\leq C \|u\|_{L^2}^4\|\nabla u\|_{L^2}^4\|\nabla^2 u\|_{L^2}^2+\frac12\|\nabla^2 u\|_{L^2}^2.
\end{align*}
Hence, we have obtained that
\begin{equation}\label{tempo infinito k=1}
\frac{\de}{\de t}\|\nabla u\|_{L^2}^2\leq \|\nabla^2 u\|_{L^2}(C \|u\|_{L^2}^4\|\nabla u\|_{L^2}^4-1).
\end{equation}
Moreover, from the energy inequality we know that
\begin{equation}
\frac{\de}{\de t}\| u\|_{L^2}^2=-\|\nabla u\|_{L^2}^2.
\end{equation}
Then, it is immediate to obtain that
\begin{align*}
\frac{\de}{\de t}(\| u\|_{L^2}^2\|\nabla u\|_{L^2}^2)\leq \| u\|_{L^2}^2\|\nabla^2 u\|_{L^2}(C \|u\|_{L^2}^4\|\nabla u\|_{L^2}^4-1)-\|\nabla u\|_{L^2}^4,
\end{align*}
and so, if $\|u_0\|_{L^2}^4\|\nabla u_0\|_{L^2}^4<\frac1C$, a classical continuity argument implies that $\| u(t,\cdot)\|_{L^2}^2\|\nabla u(t,\cdot)\|_{L^2}^2$ is a decreasing function. Then, it follows that $\|\nabla u(t,\cdot)\|_{L^2}^2$ is decreasing as well, obtaining that
$$
\|\nabla u(t,\cdot)\|_{L^2}^2\leq \|\nabla u_0\|_{L^2}^2.
$$
Moreover, the above argument also implies that 
$$
C\| u(t,\cdot)\|_{L^2}^4\|\nabla u(t,\cdot)\|_{L^2}^4-1\leq C\|u_0\|_{L^2}^4\|\nabla u_0\|_{L^2}^4-1<\tilde C<1,
$$
and substituting in \eqref{tempo infinito k=1} we also get that
\begin{equation}
\int_0^\infty\|\nabla^2 u\|_{L^2}^2\de s\leq  \frac{1}{\tilde C}\|\nabla u_0\|_{L^2}^2,
\end{equation}
which implies the claimed bound with $C_1=1+\frac{1}{\tilde C}$.\\
\\
\emph{\underline{Proof for every $k\geq 2$}}\\
\\
We take the $H^k$ inner product of the equation with $u$ and we get that
\begin{align*}
\frac12\frac{\de}{\de t}\|u\|_{H^k}^2+\|\nabla u\|_{H^k}^2&\leq |<(u\cdot\nabla)u,u>_{H^k}|\\
&=  |<u\otimes u,\nabla u>_{H^k}|\\
&\leq \|u\otimes u\|_{H^k}\|\nabla u\|_{H^k}\\
&\leq 2\|u\|_{L^\infty}\|u\|_{H^k}\|\nabla u\|_{H^k}\\
&\leq C\|u\|_{L^\infty}^2\|u\|_{H^k}^2+\frac12 \|\nabla u\|_{H^k}^2,
\end{align*}
where in the third line we used the tame estimate
$$
\|uv\|_{H^k}\lesssim \|u\|_{L^\infty}\|v\|_{H^k}+\|v\|_{L^\infty}\|u\|_{H^k},
$$
which holds for $k>\frac32$. Then, if we absorb the gradient term on the left hand side we get that
\begin{equation}
\frac{\de}{\de t}\|u\|_{H^k}^2+\|\nabla u\|_{H^k}^2\leq C\|u\|_{L^\infty}^2\|u\|_{H^k}^2,
\end{equation}
and by Gronwall's inequality we obtain that
\begin{align}
\|u(t,\cdot)\|_{H^k}^2&\leq \|u_0\|_{H^k}^2e^{C\int_0^t \|u\|_{L^\infty}^2\de s}\nonumber\\
&\leq \|u_0\|_{H^k}^2 e^{C\int_0^t\|u\|_{H^2}^2\de s}\nonumber\\
&\leq \|u_0\|_{H^k}^2e^{CC_1\|u_0\|_{H^1}^2},\label{ammo finito}
\end{align}
where in the second line we used the Sobolev embedding and in the last line we used the estimate for $k=1$. Then, we use \eqref{ammo finito} to obtain that
\begin{align}
\int_0^\infty \|\nabla u(s,\cdot)\|_{H^{k}}^2\de s&\leq\|u_0\|_{H^k}^2+C\int_0^\infty\|u(s,\cdot)\|_{L^\infty}^2\|u(s,\cdot)\|_{H^k}^2\de s\nonumber\\
&\leq \left(1+Ce^{CC_1\|u_0\|_{H^1}^2}\int_0^\infty\|u(s,\cdot)\|_{H^2}^2\de s\right)\|u_0\|_{H^k}^2\nonumber\\
&\leq \left(1+CC_1e^{CC_1\|u_0\|_{H^1}^2}\right)\|u_0\|_{H^k}^2.
\end{align}
Since $\beta>2$, we have that
$$
\|u_0\|_{H^1}\leq CN^{2-\beta}\ll 1,
$$
which finally leads to
\begin{equation}
\| u(t,\cdot)\|_{H^{k}}^2+\int_0^\infty \|\nabla u(s,\cdot)\|_{H^{k}}^2\de s\leq C_k\|u_0\|_{H^k}^2,
\end{equation}
where, for all $k\geq 2$
$$
C_k=1+[1+CC_1]e^{CC_1\|u_0\|_{H^1}^2}.
$$
Note that, as long as $\beta>2$, the constant $C_k$ can be chosen to be independent of the norms of the initial datum. This concludes the proof. 
\end{proof}

\section*{Conflict of interest}

There is no conflict of interest to declare.

\section*{Data availability}

The manuscript has no associated data.

\section*{Acknowledgements}
During the preparation of this manuscript GC has been supported by the ERC STARTING GRANT 2021 ``Hamiltonian Dynamics, Normal Forms and Water Waves" (HamDyWWa), Project Number: 101039762. Views and opinions expressed are however those of the authors only and do not necessarily reflect those of the European Union or the European Research Council. Neither the European Union nor the granting authority can be held responsible for them. GC is partially supported by INdAM-GNAMPA, by the projects PRIN 2020 ``Nonlinear evolution PDEs, fluid dynamics and transport equations: theoretical foundations and applications”, and PRIN2022 ``Classical equations of compressible fluids mechanics: existence and properties of non-classical solutions''. RL is supported by the Basque Government under program BCAM-BERC 2022-2025 and by the Spanish Ministry of Science, Innovation and Universities through the BCAM Severo Ochoa accreditation CEX2021-001142-S and the project PID2021-123034NB-I00. RL is also supported by the Ramon y Cajal fellowship RYC2021-031981-I. The authors are grateful to Daniel Peralta-Salas for useful conversations on the topics of this paper and to Claudia Peña Vázquez de la Torre for carefully reading the first version of the manuscript. The authors thanks the anonymous referees for numerous suggestions and comments, which have improved this paper.


\begin{thebibliography}{10}
\bibitem{ABC}
\textsc{D. Albritton, E. Bru\'e, M. Colombo}: \emph{Non-uniqueness of Leray solutions of the forced Navier-Stokes equations}. Ann. of Math. $\mathbf{196}$, 415-455 (2022).

\bibitem{BCD}
\textsc{H. Bahouri, J.-Y Chemin, R.  Danchin}: \emph{Fourier Analysis and Nonlinear Partial Differential Equations}. In Grundlehren der mathematischen Wissenschaften. Springer Berlin Heidelberg, (2011).

\bibitem{BP}
\textsc{J. Bourgain, N. Pavlovi\`c}. \emph{Ill-posedness of the Navier-Stokes equations in a critical space in 3D}. J. Funct. Anal. $\mathbf{255}$, 2233-2247 (2008).

\bibitem{BM}
\textsc{L. Brandolese, Y. Meyer}: \emph{On the Instantaneous Spreading for the Navier–Stokes System in the Whole Space}. ESAIM: COCV $\mathbf{8}$, 273-285 (2002).

\bibitem{BV}
\textsc{T. Buckmaster, V. Vicol}: \emph{Nonuniqueness of weak solutions to the Navier-Stokes equation}. Ann. of Math. $\mathbf{189}$, 101-144 (2019).

\bibitem{CMP}
\textsc{M. Cannone, Y. Meyer, F. Planchon}: \emph{Solutions auto-similaires des \'equations de Navier-Stokes, in S\'eminaire sur les \'Equations aux D\'eriv\'ees Partielles}, 1993–1994, \'Ecole Polytech., Palaiseau, 1994, pp. Exp. No. VIII, 12.

\bibitem{CCL} 
\textsc{P. Caro, G. Ciampa, R. Luc\`a}: \emph{Magnetic reconnection in Magnetohydrodynamics}. \url{https://arxiv.org/abs/2209.09600}

\bibitem{CC}
\textsc{D. Chae, P. Constantin}: \emph{Remarks on a Liouville-type theorem for Beltrami flows}. Int. Math. Res. Not. IMRN $\mathbf{20}$, 10012-10016 (2015).

\bibitem{ChG06}
\textsc{J.-Y. Chemin, I. Gallagher}: \emph{On the global wellposedness of the 3-D Navier-Stokes equations with large initial data}. Ann. Sci. \'Ecole Norm. Sup. $\mathbf{39}$, 679-698 (2006).

\bibitem{ChG09}
\textsc{J.-Y. Chemin, I. Gallagher}: \emph{Wellposedness and stability results for the Navier-Stokes equations in $\R^3$}. Ann. I. H. Poincar\'e – AN $\mathbf{26}$, 599-624 (2009).


\bibitem{ChGP}
\textsc{J.-Y. Chemin, I. Gallagher, M. Paicu}: \emph{Global regularity for some classes of large solutions to the Navier-Stokes equations}. Ann. of Math. $\mathbf{173}$, 983-1012 (2011).


\bibitem{Hyp22}
\textsc{G. Ciampa}: \emph{On the topology of the magnetic lines of solutions of the MHD equations}. Hyperbolic Problems: Theory, Numerics, Applications, Volume I. HYP 2022. SEMA SIMAI Springer Series, vol 34. Springer, Cham, 193-203 (2024).

\bibitem{ELP} 
\textsc{A. Enciso, R. Luc\`a, D. Peralta-Salas}: \emph{Vortex reconnection in the three dimensional Navier--Stokes equations}. Adv. Math. $\mathbf{309}$, 452-486 (2017).


\bibitem{Acta}
\textsc{A. Enciso, D. Peralta-Salas}: \emph{Existence of knotted vortex tubes in steady Euler flows}. Acta Math. $\mathbf{214}$, 61--134, (2015).


\bibitem{Annals}
\textsc{A. Enciso, D. Peralta-Salas}: \emph{Knots and links in steady solutions of the Euler equation}. Ann. of Math. $\mathbf{175}$, 345--367, (2012).


\bibitem{EP-top}
\textsc{A. Enciso, D. Peralta-Salas}: \emph{Obstructions to topological relaxation for generic magnetic fields}. \url{https://arxiv.org/abs/2308.15383}

\bibitem{EPT} 
\textsc{A. Enciso, D. Peralta-Salas, F. Torres de Lizaur}: \emph{Knotted structures in high-energy Beltrami fields on the torus and the sphere}. Ann. Sci. \'Ec. Norm. Sup. $\mathbf{50}$, Issue 4, pp. 995--1016, (2017). 

\bibitem{FK}
\textsc{H. Fujita, T. Kato}: \emph{On the Navier-Stokes initial value problem I}. Arch. Rational Mech. Anal. $\mathbf{16}$, 269-315 (1964).

\bibitem{G}
\textsc{I. Gallagher}: \emph{The tridimensional Navier-Stokes equations with almost bidimensional data: stability, uniqueness and life span.} Int. Math. Res. Not. $\mathbf{18}$, 919–935 (1997).


\bibitem{GIP}
\textsc{I. Gallagher, D. Iftimie, F. Planchon,}: \emph{Asymptotics and stability for global solutions to the Navier-Stokes equations}. Ann. Inst. Fourier $\mathbf{53}$ (5), 1387–1424 (2003).

\bibitem{GM} 
\textsc{Y. Giga, T. Miyakawa}: \emph{Solutions in $L^r$ of the Navier-Stokes initial value problem}, Arch. Rational Mech. Anal. $\mathbf{89}$, 267-281 (1985).

\bibitem{I}
\textsc{D. Iftimie}: \emph{The 3D Navier-Stokes equations seen as a perturbation of the 2D Navier-Stokes equations}. Bull. Soc. Math. Fr. $\mathbf{127}$, 473–517 (1999).


\bibitem{K}
\textsc{T. Kato}: \emph{Strong $L^p$-solutions of the Navier-Stokes equation in $\R^m$, with applications to weak solutions}. Math. Z. $\mathbf{187}$, 471-480 (1984).


\bibitem{KT}
\textsc{S. Kida, M. Takaoka}: \emph{Vortex reconnection}. Annu. Rev. Fluid Mech. $\mathbf{26}$, 169-189 (1994).

\bibitem{KochTat}
\textsc{H. Koch, D. Tataru}: \emph{Well Posedness for the Navier Stokes equations.} Adv. Math. $\mathbf{157}$, 22--35 (2001). 

\bibitem{Lema-Rieusset}
\textsc{P.-G. Lemari\`e-Rieusset}: \emph{Recent Developments in the Navier-Stokes Problem}. Research Notes in Mathematics, vol. 431, Chapman \& Hall/CRC (2002).

\bibitem{Leray}
\textsc{J. Leray}: \emph{Sur le mouvement d’un liquide visqueux emplissant l’espace}. Acta Math. $\mathbf{63}$ no. 1, 193-248 (1934). 

\bibitem{LS1}
\textsc{D. Li, Y. G. Sinai}: \emph{Bifurcations of solutions of the 2-dimensional Navier–Stokes system}. Essays in Mathematics and its Applications: In Honor of Stephen Smale's 80th Birthday. Berlin, Heidelberg, pp. 241-269. Springer Berlin Heidelberg, (2012).

\bibitem{LS2}
\textsc{D. Li, Y. G. Sinai}: \emph{Nonsymmetric bifurcations of solutions of the 2D Navier–Stokes system}. Adv. Math. $\mathbf{229}$(3), 1976-1999 (2012).


\bibitem{L} 
\textsc{R. Luc\`a}: \emph{A note on vortex reconnection for the 3D Navier--Stokes equation}. To appear in Lecture Notes of the Unione Matematica Italiana.


\bibitem{MTL}
\textsc{A. Mahalov, E. S. Titi, S. Leibovich}: \emph{Invariant helical subspaces for the Navier-Stokes equations}. Arch. Rat. Mech. Anal. $\mathbf{112}$, 193-222 (1990).

\bibitem{M}
\textsc{T. Miyakawa}: \emph{On space time decay properties of nonstationary incompressible Navier–Stokes flows in $R^n$}. Funkcial. Ekvac. $\mathbf{32}$, 541-557 (2000).

\bibitem{Liouville GAFA}
\textsc{N. Nadirashvili}: \emph{Liouville Theorem for Beltrami flow}. Geom. Funct. Anal. $\mathbf{24}$, 916-921 (2014).
 
\bibitem{OC}
\textsc{K. Ohkitani, P. Constantin}:  \emph{Numerical study of the Eulerian–Lagrangian formulation of the Navier--Stokes equations}. Phys. Fluids $\mathbf{15}$, 3251--3254, (2003).


\bibitem{PRST}
\textsc{G. Ponce, R. Racke, T. C. Sideris, E. S. Titi}: \emph{Global stability of large solutions to the 3D Navier-Stokes equations}, Comm. Math. Phys. $\mathbf{159}$, 329–341 (1994).


\bibitem{RRS}
\textsc{J.C. Robinson, J.L. Rodrigo, W. Sadowski}: \emph{The Three-Dimensional Navier–Stokes Equations}. Cambridge Studies in Advanced Mathematics $\mathbf{157}$, Cambridge University Press, Cambridge (2016).
\end{thebibliography}
\end{document}